\newtheorem{theorem}{Theorem}[section]
\newtheorem{lemma}[theorem]{Lemma}
\newtheorem{proposition}[theorem]{Proposition}
\theoremstyle{definition}
\newtheorem{definition}[theorem]{Definition}
\newtheorem{remark}[theorem]{Remark}
\newcommand{\R}{{\mathbb R}}
\newcommand{\BB}{\mathbb{B}}
\newcommand{\tr}{\mathrm{tr}^*}
\newcommand{\be}{\small\begin{equation}}
\newcommand{\bel}[1]{\small\begin{equation}\label{#1}}
\newcommand{\ee}{\end{equation}\normalsize}
\newcommand{\BA}{\begin{array}}
\newcommand{\EA}{\end{array}}
\newcommand{\BAN}{\renewcommand{\arraystretch}{1.2}
\setlength{\arraycolsep}{2pt}\begin{array}}
\newcommand{\BAV}[2]{\renewcommand{\arraystretch}{#1}
\setlength{\arraycolsep}{#2}\begin{array}}
\newcommand{\BSA}{\begin{subarray}}
\newcommand{\ESA}{\end{subarray}}
\newcommand{\BAL}{\begin{aligned}}
\newcommand{\EAL}{\end{aligned}}
\newcommand{\forevery}{\quad \forall}
\newcommand{\norm}[1]{\left \|#1\right \|}
\newcommand{\supp}{\mathrm{supp}\,}
\newcommand{\dist}{\mathrm{dist}\,}
\newcommand{\diam}{\mathrm{diam}\,}
\newcommand{\prt}{\partial}
\newcommand{\tl}{\tilde}
\newcommand{\sbs}{\subset}
\newcommand\1{{\ensuremath {\mathds 1} }}
\def\ga{\alpha}            
\def\gth{\theta}                         \def\vge{\varepsilon}
\def\gf{\phi}           
            \def\gl{\lambda}
\def\gm{\mu}        \def\gn{\nu}         
       \def\gt{\tau}
      \def\gw{\omega}
     \def\Gd{\Delta}      
\def\Gl{\Lambda}          
\def\Gw{\Omega}              
   \def\CM{{\mathcal M}}   \def\CN{{\mathcal N}}
   \def\CO{{\mathcal O}}   
\def\CA{{\mathcal A}}   \def\CB{{\mathcal B}}   \def\CC{{\mathcal C}}
\def\CD{{\mathcal D}}      \def\CF{{\mathcal F}}
   \def\CH{{\mathcal H}}   
      \def\CL{{\mathcal L}}
   \def\BBB {\mathbb B}    
\def\BBG {\mathbb G}       
\def\BBJ {\mathbb J}   \def\BBK {\mathbb K}    
   \def\BBN {\mathbb N}    
   \def\BBR {\mathbb R}    \def\BBS {\mathbb S}
\def\GTM {\mathfrak M}
\def\tr{\mathrm{tr}}
\newcommand{\sB}{\mathscr{B}}
\newcommand{\sC}{\mathscr{C}}
\newcommand{\sE}{\mathscr{E}}
\def \dd {\,\mathrm{d}}
\def \dx {\mathrm{d}x}
\newcommand{\ei}{{\phi_{\xm }}}
\newcommand{\xa}{\alpha}
\newcommand{\xb}{\beta}
\newcommand{\xg}{\gamma}
\newcommand{\xd}{\delta}
\newcommand{\xe}{\varepsilon}
\newcommand{\xk}{\kappa}
\newcommand{\xl}{\lambda}
\newcommand{\xm}{\mu}
\newcommand{\xn}{\nu}
\newcommand{\xS}{\Sigma}
\newcommand{\xf}{\phi}
\newcommand{\xo}{\omega}
\newcommand{\xO}{\Omega}
\newcommand{\myint}[2]{{\displaystyle \int_{#1}^{#2}}}
\def\Nthb{\BBN_{\xa}}
\newcommand{\ap}{{\xa_{\scaleto{+}{3pt}}}}
\newcommand{\am}{{\xa_{\scaleto{-}{3pt}}}}
\def\bal#1\eal{\small\begin{align*}#1\end{align*}\normalsize}
\def\ba#1\ea{\small\begin{align}#1\end{align}\normalsize}
\numberwithin{equation}{section}
\title[Elliptic Schr\"odinger equations]{Elliptic Schr\"odinger equations \\ with gradient-dependent nonlinearity \\ and  Hardy potential singular on manifolds}
\author[K. T. Gkikas]{Konstantinos T. Gkikas}
\address{Konstantinos T. Gkikas, Department of Mathematics, University of the Aegean,\newline 83200 Karlovassi, Samos,
Greece\newline
Department of Mathematics, National and Kapodistrian University of Athens, 15784 Athens, Greece.}
\email{kgkikas@aegean.gr}
\author[P.T. Nguyen]{Phuoc-Tai Nguyen}
\address{Phuoc-Tai Nguyen, Department of Mathematics and Statistics, Masaryk University, Brno, Czech Republic}
\email{ptnguyen@math.muni.cz}
\date{\today}
\begin{document}

\begin{abstract}
Let $\Omega \subset \R^N$ ($N \geq 3$) be a $C^2$ bounded domain and $\Sigma \subset \Omega$ is a $C^2$ compact boundaryless submanifold in $\mathbb{R}^N$  of dimension $k$, $0\leq k < N-2$. For $\mu\leq (\frac{N-k-2}{2})^2$, put $L_\mu := \Delta + \mu d_{\Sigma}^{-2}$ where $d_{\Sigma}(x) = \dist(x,\Sigma)$. We study boundary value problems for equation $-L_\mu u = g(u,|\nabla u|)$ in $\Omega \setminus \Sigma$, subject to the boundary condition $u=\nu$ on $\partial \Omega \cup \Sigma$, where $g: \mathbb{R} \times \mathbb{R}_+ \to \mathbb{R}_+$ is a continuous and nondecreasing function with $g(0,0)=0$, $\nu$ is a given nonnegative measure on $\partial \Omega \cup \Sigma$. When $g$ satisfies a so-called subcritical integral condition, we  establish an existence result for the problem under a smallness assumption on $\nu$. If $g(u,|\nabla u|) = |u|^p|\nabla u|^q$, there are  ranges of $p,q$, called subcritical ranges, for which the subcritical integral condition is satisfied, hence the problem admits a solution. Beyond these ranges, where the  subcritical integral condition may be violated, we establish various criteria on $\nu$ for the existence of a solution to the problem expressed in terms of appropriate Bessel capacities. 
	
\medskip
	
\noindent\textit{Key words: Hardy potentials, gradient-dependent nonlinearities, boundary trace, capacities.}
	
\medskip
	
\noindent\textit{Mathematics Subject Classification: 35J10, 35J25, 35J61, 35J75.}
	
\end{abstract}

\maketitle
\tableofcontents
\section{Introduction}
Let $\Omega \subset \R^N$ ($N \geq 3$) be a $C^2$ bounded domain and $\Sigma \subset \Omega$ is a $C^2$ compact boundaryless submanifold of dimension $k$, $0\leq k < N-2$. When $k = 0$ we assume that $\Sigma = \{0\} \subset \Omega$. Denote $d(x)=\dist(x,\partial \Omega)$ and $d_{\Sigma}(x) = \dist(x,\Sigma)$. For $\mu \in \R$, let $L_\mu$ be the Schr\"odinger operator defined by 
$$L_\mu := \Delta + \mu d_\Sigma^{-2}.$$ 
In this paper, we study the semilinear equation with a \textit{source nonlinear term depending on solutions and their gradient} of the form
\be \label{eq:ugradu}
-L_\mu u = g (u,|\nabla u| ) \quad \text{in } \Gw\setminus\Sigma.
\ee

In the free-potential case, namely $\Sigma=\emptyset$ and $\mu=0$, this equation and associated boundary value problems, as well as other related problems, have been investigated in numerous papers. Among many other results, we refer to \cite{ChiCir_2015,BHV,MMV_2019,FPS_2020} for $g(u,|\nabla u|) = |u|^p|\nabla u|^q$ and \cite{MMV_2020} for  $g(u,|\nabla u|) = |u|^p + M|\nabla u|^q$.

When $\mu \neq 0$, the existence and sharp two-sided estimates of the Green function $G_\mu$ and Martin kernel $K_\mu$ of $L_\mu$ were established in  \cite{GkiNg_linear}, which are important ingredients in the theory for inhomogeneous linear equation associated to \eqref{eq:ugradu}. The case where $g$ depends only on $u$ was treated in \cite{GkiNg_source} where various sharp necessary and sufficient conditions for the existence of solutions 
were derived. Relevant results can be also found in \cite{CheZho_2018} for the case $\Sigma=\{0\} \subset \Omega$, in \cite{FelTer_2006, CheChe_2023} for multi-singular inverse-square potentials, in  \cite{DD1,DD2,DN,Fal_2013} for existence and nonexistence results regarding the source nonlinearity $g(u)=u^p$ in different frameworks, and in \cite{GkiNg_absorp} for absorption nonlinearities. 

When $\mu \neq 0$ and $g$ depends on both $u$ and $\nabla u$, equation \eqref{eq:ugradu} becomes more intricate, governed by the following competing factors.

$\bullet$ The Schr\"odinger operator: In this operator, the Hardy potential $\mu d_{\Sigma}^{-2}$ is singular on $\Sigma$ and enjoys the same scaling as the Laplacian $\Delta$, hence it plays a far more significant role than a mere perturbation of the Laplacian. If $\mu \leq C_{\Omega,\Sigma}$, where $C_{\Omega,\Sigma}$ is the best Hardy constant defined by
\bal \mathcal{C}_{\Omega,\Sigma}:=\inf_{0 \not \equiv \varphi \in H^1_0(\Omega)}\frac{\int_\Omega |\nabla \varphi|^2\dx}{\int_\Omega d_\Sigma^{-2}\varphi^2 \dx},
\eal 
then $L_\mu$ is positive defined. In case of the critical potential, namely $\mu=C_{\Omega,\Sigma}$, $L_\mu$ is not uniformly elliptic, which leads to the invalidity of classical techniques. 

$\bullet$ The domain $\Omega$ and the manifold $\Sigma$: The geometric properties of $\Omega$ and $\Sigma$ determine the value of the best Hardy constant $\mathcal{C}_{\Omega,\Sigma}$. In particular, if $\Sigma=\{0\}$ then  ${\mathcal C}_{\Omega,\{0\}}=\left(\frac{N-2}{2} \right)^2$. In general, it is well-known that ${\mathcal C}_{\xO,\Sigma}\in (0,H^2]$ (see e.g. \cite{BFT, DD1, DD2}), where
\be \label{valueH}
H:=\frac{N-k-2}{2}.
\ee
Moreover, ${\mathcal C}_{\Omega,\Sigma}=H^2$ if $-\Delta d_\Sigma^{2+k-N} \geq 0$ in the sense of distributions in $\Omega \setminus \Sigma$ or if $\Omega=\Sigma_\beta$ with $\beta$  small enough (see \cite{BFT}), where
\bal
\Sigma_\beta :=\{ x \in \R^N \setminus \Sigma: d_\Sigma(x) < \beta \}.
\eal   

$\bullet$ The nonlinear term: Since $g$ depends on solutions and their gradient, equation \eqref{eq:ugradu} cannot be treated by variational approach nor monotonicity method. Moreover, the competition between $u$ and $|\nabla u|$ makes it more complicated to handle the nonlinear term.

$\bullet$ Boundary data: Since the Hardy potential $d_\Sigma^{-2}$ blows up near $\Sigma$ and is bounded near $\partial \Omega$, to investigate boundary value problems for \eqref{eq:ugradu}, boundary conditions should be imposed in an appropriate sense. The effect of the potential would be better understood if the concentration of the boundary data is considered separately on $\partial \Omega$ and on $\Sigma$.

The combined effects of the above features gives rise to various ranges for the existence of solutions to boundary value problems for \eqref{eq:ugradu}, complicating the analysis and requiring refined techniques to deal with these ranges. \medskip

For $\mu \leq H^2$, let $\am$ and $\ap$ be the roots of the algebraic equation $\ga^2 - 2H\ga + \mu=0$, i.e.
\bel{apm}
\am:=H-\sqrt{H^2-\mu}, \quad \ap:=H+\sqrt{H^2-\mu}.
\ee
Note that $\am\leq H\leq\ap<2H$, and $\am \geq 0$ if and only if $\mu \geq 0$. 

For $\mu \leq H^2$, the first eigenvalue of $-L_\mu$ is given by 
\bal \lambda_\mu:=\inf\left\{\int_{\Gw}\left(|\nabla u|^2-\mu d_\Sigma^{-2}u^2\right)dx: u \in C_c^1(\Omega), \| u \|_{L^2(\Omega)}=1\right\}.
\eal
Note that $\lambda_\mu>-\infty$ due to \cite[Lemma 2.4 and Theorem 2.6]{DD1} (see also \cite{DD2}). The corresponding eigenfunction $\phi_\mu$, normalized by $\| \phi_\mu \|_{L^2(\Omega)}=1$, satisfies two-sided estimate $\phi_\mu \asymp d\,d_\Sigma^{-\am}$ in $\Omega \setminus \Sigma$ (see Appendix \ref{subsect:eigen} for more detail). We remark that $\lambda_\mu>0$ if $\mu<\CC_{\Omega,\Sigma}$; however, this does not hold true in general.  

Under the assumption $\lambda_\mu>0$, the existence and sharp two-sided estimates of the Green function $G_\mu$ and Martin kernel $K_\mu$ associated to $-L_\mu$ were established in \cite{GkiNg_linear}. \textit{Let us assume throughout the paper that}
\be \label{assump1}
\mu \leq H^2 \quad \text{and} \quad \lambda_\mu > 0.
\ee

Put
\bal
\GTM(\Omega \setminus \Sigma;\phi_\mu) := \{ \sigma \text{ is a Radon measure on } \Omega \setminus \Sigma:  \| \sigma\|_{\GTM(\Omega \setminus \Sigma;\ei)}:=\int_{\Omega \setminus \Sigma}\phi_\mu \dd |\sigma|<\infty \}, \\ \GTM(\partial \Omega \cup \Sigma) := \{ \lambda \text{ is a Radon measure on } \partial \Omega \cup \Sigma:  \| \lambda\|_{\GTM(\partial \Omega \cup \Sigma)}:=\lambda(\partial \Omega \cup \Sigma) < \infty\},
\eal
and denote by $\GTM^+(\Omega \setminus \Sigma;\phi_\mu)$ and $\GTM^+(\partial \Omega \cup \Sigma)$ their positive cones respectively.
Let $\BBG_{\mu}$ and $\BBK_{\mu}$ be the Green operator and Martin operator respectively defined as
\begin{align} \label{BBG} 
	\BBG_{\mu}[\tau](x) &:= \int_{\Omega \setminus \Sigma} G_\mu(x,y)d\tau(y), \quad x \in \Omega \setminus \Sigma, \quad \tau \in \GTM(\Omega \setminus \Sigma;\phi_\mu), \\ \label{BBK}
	\BBK_{\mu}[\nu](x) &:= \int_{\partial \Omega \cup \Sigma} K_\mu[\nu](x,y)d\nu(y), \quad x \in \Omega \setminus \Sigma, \quad \nu \in \GTM(\partial \Omega \cup \Sigma).  
\end{align}


To formulate boundary value problems for \eqref{eq:ugradu}, we will employ the notion of boundary trace introduced in \cite{GkiNg_linear}. The notion is defined by normalizing the behavior of functions near $\Sigma$ with respect to a suitable weight behaving as $d_{\Sigma}^{-\ap}$, hence allowing us to capture possible singularity phenomena on $\Sigma$. 
\begin{definition}[Boundary trace] \label{nomtrace}
We say that	a function $u\in W^{1,\kappa}_{\mathrm{loc}}(\xO\setminus\xS),$ for some $\kappa>1,$ possesses a \emph{boundary trace}, denoted by $\tr(u)$,  if there exists a measure $\nu \in\GTM(\partial \Omega \cup \Sigma)$ such that for any $C^2$ exhaustion  $\{ O_n \}$ of $\Omega \setminus \Sigma$, there  holds
	\bal
	\lim_{n\rightarrow\infty}\int_{ \partial O_n}\phi u\dd\omega_{O_n}^{x_0}=\int_{\partial \Omega \cup \Sigma} \phi \dd\nu \quad\forall \phi \in C(\overline{\Omega}).
	\eal
	In this case, we will write $\tr(u)=\nu$.
\end{definition}
\noindent More details about the $L_\mu$-harmonic measure $\omega_{O_n}^{x_0}$ is presented in Appendix \ref{appendix:har-measure}.

%

With this notion of boundary trace, the boundary value problem for \eqref{eq:ugradu} reads as
\be \label{sourceprobrho} 
	\left\{ \begin{aligned} 
		-L_\mu u &= g (u,|\nabla u| ) \quad \text{in } \Gw\setminus\Sigma, \\ 
		\tr(u) &= \varrho \nu,
	\end{aligned} \right. 
\ee
where $\varrho$ is a positive parameter and $\nu \in \GTM^+(\partial \Omega \cup \Sigma)$ with $\| \nu \|_{\GTM(\partial \Omega \cup \Sigma)}=1$. Here the boundary datum is written in the form $\varrho \nu$ with $\| \nu \|_{\GTM(\partial \Omega \cup \Sigma)}=1$ in stead of $\tilde \nu \in \GTM^+(\partial \Omega \cup \Sigma)$ to make the exhibition of the paper neater and does not cause any loss of generality. Weak solutions to problem \eqref{sourceprobrho} are defined as follows:
\begin{definition}
We say that $u$ is a \textit{weak solution} of problem \eqref{sourceprobrho} if $u\in L^1(\Omega;\ei)$, $g(u,|\nabla u|) \in L^1(\Omega;\ei)$ and $u$ satisfies
	\bal
	- \int_{\Gw}u L_{\xm }\zeta \dd x = \int_{\Gw \setminus \Sigma} g(u,|\nabla u|) \zeta \dd x - \varrho \int_{\Gw} \mathbb{K}_{\xm}[\xn]L_{\xm }\zeta \dd x
	\qquad\forall \zeta \in\mathbf{X}_\xm(\xO\setminus \Sigma),
	\eal
	where the space of test functions is
	\bal {\bf X}_\mu(\Gw\setminus \Sigma):=\{ \zeta \in H_{\rm loc}^1(\Omega \setminus \Sigma): \phi_\mu^{-1} \zeta \in H^1(\Gw;\phi_\mu^{2}), \, \phi_\mu^{-1}L_\mu \zeta \in L^\infty(\Omega)  \}.
	\eal
\end{definition}
We note that a function $u$ is a weak solution to \eqref{sourceprobrho} if and only if it can be decomposed as
\bal u = \BBG_{\mu}[g(u,|\nabla u|)] + \varrho \BBK_{\mu}[\nu] \quad \text{ a.e. in }  \Omega \setminus \Sigma.
\eal

Put
\ba \label{p*q*}
p_*:=\min\left\{\frac{N-\am}{N-\am-2},\frac{N+1}{N-1}\right\} \quad \text{and} \quad 
q_*:=\min\left\{\frac{N-\am}{N-\am-1}, \frac{N+1}{N}\right\}.
\ea
Notice that if $\mu>0$ then $\am>0$, hence $p_*=\frac{N+1}{N-1}$ and $q_*=\frac{N+1}{N}$. These two exponents play an important role in the study of \eqref{sourceprobrho}, as shown below.

\begin{theorem}\label{exist1}  
Assume $g$ satisfies
	\bel{integralcond} \Gl_{g}:=\int_{1}^\infty g(s,s^{\frac{p_*}{q_*}})s^{-1-p_*}ds <\infty,
	\ee
	\bel{multi}
	g(as,bt) \leq \tl k (a^{\tl p} +  b^{\tl q})g(s,t) \quad \forall (a,b,s,t) \in \BBR_+^4,
	\ee
	for some $\tl p>1$, $\tl q>1$, $\tl k > 0$.
	Then there exists $\varrho_0>0$ depending on $N,\mu,\Gw,\Gl_g,\tl k, \tl p, \tl q$ such that for any $\varrho \in (0,\varrho_0)$, problem \eqref{sourceprobrho} admits a positive weak solution $u \geq \varrho \BBK_\mu [\nu]$ a.e. in $\Omega \setminus \Sigma$.
\end{theorem}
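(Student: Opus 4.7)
I plan to realize a weak solution of \eqref{sourceprobrho} as a fixed point of the operator $T(u) := \BBG_\mu[g(u,|\nabla u|)] + \varrho \BBK_\mu[\nu]$, obtained via Schauder's fixed point theorem. Since $g \geq 0$, any such fixed point automatically satisfies $u \geq \varrho \BBK_\mu[\nu]$. The natural functional framework is provided by weighted Marcinkiewicz (weak Lebesgue) spaces: the two-sided estimates of $G_\mu$ and $K_\mu$ from \cite{GkiNg_linear}, combined with the asymptotics $\phi_\mu \asymp d\, d_\Sigma^{-\am}$, yield continuous mappings
\bal
\BBG_\mu:L^1(\Omega;\phi_\mu) \to L^{p_*,\infty}(\Omega;\phi_\mu), \qquad \nabla \BBG_\mu:L^1(\Omega;\phi_\mu) \to L^{q_*,\infty}(\Omega;\phi_\mu),
\eal
together with the analogous bounds for $\BBK_\mu$ acting on $\GTM(\partial\Omega\cup\Sigma)$. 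The exponents $p_*, q_*$ of \eqref{p*q*} are precisely those saturating these embeddings; their appearance as minima of two competing values reflects whether the leading singularity is governed by the Hardy weight $d_\Sigma^{-\am}$ or by the boundary weight $d$.

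With these estimates in hand, I would define the closed convex set
\bal
\CO := \{\, u \geq \varrho \BBK_\mu[\nu] :\ \|u\|_{L^{p_*,\infty}(\Omega;\phi_\mu)} \leq M,\ \| |\nabla u| \|_{L^{q_*,\infty}(\Omega;\phi_\mu)} \leq M' \,\},
\eal
with $M, M'$ tuned so that $T(\CO) \subset \CO$. The core ingredient is the nonlinear bound
\bal
\|g(u,|\nabla u|)\|_{L^1(\Omega;\phi_\mu)} \leq C \Lambda_g\, \Psi(M,M'),
\eal
valid for $u \in \CO$ and some increasing $\Psi$ depending on $\tilde p, \tilde q$. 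This is established by a layer-cake argument: decomposing $\int g(u,|\nabla u|)\phi_\mu\,\dx$ along the coupled level sets $\{u>s\} \cup \{|\nabla u|>s^{p_*/q_*}\}$, and using \eqref{multi} to separate variables, one reduces the integral to exactly $\int_1^\infty g(s,s^{p_*/q_*})s^{-1-p_*}\,ds$, up to low-level contributions controlled by the $L^1(\Omega;\phi_\mu)$ bounds from $\BBG_\mu$ and $\BBK_\mu$. The exponent $p_*/q_*$ in \eqref{integralcond} is thus not arbitrary but is the exact coupling dictated by the respective Marcinkiewicz scales of $u$ and of $|\nabla u|$. Condition \eqref{multi} further produces a factor of the form $\varrho^{\tilde p}+\varrho^{\tilde q}$ after rescaling, which for $\varrho$ small enough closes the invariance.

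Continuity of $T$ on $\CO$ in the $L^1(\Omega;\phi_\mu)$ topology augmented with $L^1_{\loc}$ convergence of gradients follows from dominated convergence once the above bound on $g$ is secured; relative compactness of $T(\CO)$ in the same topology then follows from interior elliptic regularity for $-L_\mu$ on $\Omega\setminus\Sigma$, combined with the uniform weighted tail controls supplied by the Marcinkiewicz estimates (through the Lorentz embedding $L^{p_*,\infty}(\Omega;\phi_\mu)\hookrightarrow L^s(\Omega;\phi_\mu)$ for $s<p_*$). Schauder's theorem then delivers the fixed point. I expect the main obstacle to be the two-variable layer-cake estimate: tracking simultaneously the distribution functions of $u$ and of $|\nabla u|$ with respect to $\phi_\mu \dx$ is delicate, and because $p_*, q_*$ in \eqref{p*q*} are defined as minima of two competing exponents, one must separate the analysis into the regimes $\am \leq 0$ and $\am > 0$ (equivalently $\mu \leq 0$ and $\mu > 0$), each requiring slightly different bookkeeping in the tail integrals.
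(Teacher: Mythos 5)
Your high-level strategy matches the paper's: realize a weak solution as a Schauder fixed point of the map $u\mapsto \BBG_\mu[g(u,|\nabla u|)]+\varrho\BBK_\mu[\nu]$, using the weak Lebesgue estimates for $\BBG_\mu,\nabla\BBG_\mu,\BBK_\mu,\nabla\BBK_\mu$ and a distribution-function (layer-cake) argument, encoded in the paper as Lemma \ref{subcrcongran}, to control $\|g(u,|\nabla u|)\|_{L^1(\Omega;\phi_\mu)}$ via the subcritical integral \eqref{integralcond} and the homogeneity condition \eqref{multi}. The identification of $p_*/q_*$ as the correct coupling between the Marcinkiewicz scales of $u$ and $\nabla u$ is also right.

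The gap is in the Schauder step itself. You propose to apply Schauder directly to the original $g$, claiming continuity via dominated convergence and compactness via interior elliptic regularity plus tail controls. But $g$ is merely continuous, nondecreasing and possibly unbounded, so there is no dominating function to invoke; what one actually needs is Vitali's convergence theorem, which in turn requires equi-integrability of $\{g(u_n,|\nabla u_n|)\phi_\mu\}$ --- and even granting that, the compactness of $T(\CO)$ in $W^{1,1}(\Omega\setminus\Sigma;\phi_\mu)$ for unbounded $g$ is not a consequence of interior regularity alone, since the weak Lebesgue bounds on $T(u)$ do not yield pointwise a priori bounds near $\partial\Omega\cup\Sigma$. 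The paper circumvents precisely this by a two-step approximation: it first replaces $g$ by bounded $C^1$ truncations $g_n$ (Lemma \ref{transf}), for which the fixed-point operator $\BBS$ satisfies pointwise bounds $|\BBS(w)|\lesssim d\,d_\Sigma^{-\max(\am,0)}$ and $|\nabla\BBS(w)|\lesssim d_\Sigma^{-\max(\am,0)-1}$ uniformly in $w$ (estimates \eqref{sup1}--\eqref{sup2}), making compactness immediate by dominated convergence; one then sends $n\to\infty$, using Lemma \ref{subcrcongran} and the uniform-in-$n$ choice of $\varrho_0,t_0$ to obtain the equi-integrability needed for Vitali. Additionally, the paper works with the shifted unknown $v=u-\varrho\BBK_\mu[\nu]$ with homogeneous boundary trace, and its invariant set $\CO$ carries \emph{both} weak $L^{p_*}_w,L^{q_*}_w$ bounds and strong $L^\kappa$ bounds for some $1<\kappa<\min\{\tl p,\tl q,q_*\}$; the strong-norm component $Q_3+Q_4$ is used to absorb the $\varrho^\kappa$ contribution from \eqref{multi} and to close the self-map estimate, a detail your set $\CO$ (which only records weak norms) omits. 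Without the truncation step or an equivalent device, your compactness and continuity arguments do not close as written.
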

Condition \eqref{integralcond} is commonly called a \textit{subcritical integral condition} which describes a sharp growth of $g$ in its two arguments for the solvability of problem \eqref{sourceprobrho}. Let us point out that the existence result is obtained by using the Schauder fixed point theorem. However, due to the peculiar effect of the Hardy potential and the presence of the gradient-dependent nonlinearity, the proof is non-trivial, necessitating to establish sharp bounds on the gradient of the Green function and Martin kernel in weak-$L^p$ spaces (see Theorems \ref{lpweakgrangreen}, \ref{lpweakgrangreen2} and \ref{lpweakgranmartin1}), and to be implemented through an approximation procedure in which assumptions \eqref{integralcond} and \eqref{multi} are employed. 

Two typical examples of $g$ to be kept in mind are presented below.

$\bullet$ When $g(s,t) =|s|^p + t^q$ for $s \in \R$, $t \in \R_+$, $p>1$, $q>1$ then $g$ satisfies \eqref{integralcond} and \eqref{multi} if $1<p<p_*$ and $1<q<q_*$.

$\bullet$ When $g(s,t) =|s|^pt^q$ for $s \in \R$, $t \in \R_+$, $p \geq 0$, $q \geq 0$, $p+q>1$ then $g$ satisfies \eqref{integralcond} and \eqref{multi} if
\begin{equation} \label{subcriticalrange}
q_* p + p_* q < p_* q_*.
\end{equation}  

\noindent Whenever $p,q$ satisfy the above conditions, we say that $p,q$ are in a \textit{subcritical range}. 

Between the above examples, we will focus on the  case   $g(u,|\nabla u|) =|u|^p|\nabla u|^q$  in which the interaction of $u^p$ and $|\nabla u|^q$ is more intricate and interesting. A sufficient condition for the existence of a solution can be expressed in terms of appropriate capacities $\text{Cap}_{\BBN_{\alpha,\sigma},s}^{b,\theta}(E)$ (see \eqref{dualcap} for the definition of these capacities below).

We will first consider the case $\xm<\frac{(N-2)^2}{4}$.

\begin{theorem}\label{existth}
	Let $\xm<\frac{(N-2)^2}{4}$ and  $p \geq 0$, $q\geq0$ such that $p+q>1$ and
	$\am p+(\am+1)q<2+\am$. Assume  $\nu \in \GTM^+(\partial \Omega \cup \Sigma)$ satisfies
	\ba\label{con3-a}
	\xn(E)\leq C\, \mathrm{Cap}_{\BBN_{2\am+1,1},(p+q)'}^{p+1,-\am(p+1)-(\am+1)q}(E) \quad \text{for any Borel set } E \subset \overline{\xO}.
	\ea
	Then for $\varrho>0$ small enough, the problem
	\bel{problem:source-power} \left\{ \BAL -L_\mu u &=|u|^p|\nabla u|^q \quad \text{in } \Gw\setminus\xS, \\ \tr(u) &= \varrho \nu,
	\EAL \right.
	\ee
	admits a nonnegative weak solution.
\end{theorem}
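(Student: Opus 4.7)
The natural framework is Schauder's fixed point theorem applied to the map
\[
T(u) := \varrho\,\BBK_\mu[\nu] + \BBG_\mu\!\left[|u|^p|\nabla u|^q\right],
\]
a fixed point of which is, by the decomposition remark following the definition of weak solution, exactly a weak solution of \eqref{problem:source-power}. Since the condition $\alpha_- p+(\alpha_-+1)q<2+\alpha_-$ lies beyond the subcritical range of Theorem \ref{exist1}, the strategy of the preceding existence theorem (pure size estimates of $\BBK_\mu[\nu]$ in weak Lebesgue spaces) no longer applies, and the capacity hypothesis \eqref{con3-a} must be brought in to control the boundary datum.

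The first step is to translate the capacity hypothesis \eqref{con3-a} into concrete weighted $L^{p+q}$ estimates for the Martin potential. By a standard duality argument for Bessel-type capacities (of the kind developed by Fefferman--Phong and Adams--Hedberg), a measure $\nu$ satisfies \eqref{con3-a} if and only if an associated convolution-type operator is bounded in the dual $L^{p+q}$ weighted by $\phi_\mu^{p+1}d_\Sigma^{-\alpha_-(p+1)-(\alpha_-+1)q}$; combining this with the sharp two-sided bounds on $K_\mu$ and on $\nabla K_\mu$ proved in Theorem \ref{lpweakgranmartin1} yields simultaneous estimates of the form
\[
\int_{\Omega\setminus\Sigma}\!\bigl(\BBK_\mu[\nu]\bigr)^{p+q}\phi_\mu\, d_\Sigma^{\theta}\,dx + \int_{\Omega\setminus\Sigma}\!\bigl|\nabla\BBK_\mu[\nu]\bigr|^{q}\bigl(\BBK_\mu[\nu]\bigr)^{p}\phi_\mu\,dx \;\leq\; C\|\nu\|_{\mathrm{cap}}^{p+q},
\]
with the exponents $\theta$ forced precisely by the choice of capacity in \eqref{con3-a}. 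The algebraic restriction $\alpha_- p+(\alpha_-+1)q<2+\alpha_-$ is exactly what makes these weights admissible against the Green kernel bound.

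The second step is to pair these Martin estimates with the weighted Green-operator bounds derived from Theorems \ref{lpweakgrangreen} and \ref{lpweakgrangreen2}: for $f\in L^1(\Omega;\phi_\mu)$, the quantities $\BBG_\mu[f]$ and $|\nabla\BBG_\mu[f]|$ satisfy the same type of weighted $L^{p+q}$ control as above, with a gain coming from integrability of the kernel. I would then define the closed convex set
\[
\mathcal{E}_\lambda := \bigl\{ u\in W^{1,1}_{\mathrm{loc}}(\Omega\setminus\Sigma)\,:\, 0\leq u\leq 2\varrho\,\BBK_\mu[\nu],\ \|u\|_{*}+\|\nabla u\|_{**}\leq \lambda\varrho\bigr\}
\]
in the appropriate weighted norms, and verify via the two families of estimates that $T(\mathcal{E}_\lambda)\subset\mathcal{E}_\lambda$ provided $\lambda$ is chosen large but fixed and $\varrho$ is sufficiently small (the factor $\varrho^{p+q-1}$ produced by the nonlinearity is absorbed into the smallness of $\varrho$, using $p+q>1$). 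Continuity of $T$ in the weighted topology follows from dominated convergence and the linear nature of $\BBG_\mu$; compactness is obtained via interior $W^{2,r}$ estimates for $-L_\mu$ away from $\partial\Omega\cup\Sigma$ combined with the weighted $L^{p+q}$ control near the singular set. Schauder then produces the required fixed point, which is nonnegative since $\BBK_\mu[\nu]\geq 0$ and the map preserves positivity.

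\textbf{Main obstacle.} The central difficulty is Step one: extracting from the abstract capacity condition \eqref{con3-a} \emph{both} the weighted bound on $\BBK_\mu[\nu]$ and the bound on $|\nabla\BBK_\mu[\nu]|$ with weights that are consistent with the Green-kernel estimates of Step two. The parameters appearing in the capacity (namely the indices $2\alpha_-+1$, $1$, $(p+q)'$, $p+1$, $-\alpha_-(p+1)-(\alpha_-+1)q$) are delicately tuned so that the arithmetic of weights balances on both sides, and verifying this balance under the sole algebraic restriction $\alpha_- p+(\alpha_-+1)q<2+\alpha_-$ is where the argument really has to be pushed; the fixed-point mechanics of Steps three and four, by contrast, are fairly standard once the right weighted estimates are in hand.
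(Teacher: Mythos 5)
Your scaffolding (Schauder fixed point for $T(u)=\varrho\,\BBK_\mu[\nu]+\BBG_\mu[|u|^p|\nabla u|^q]$ on a set controlled pointwise by a multiple of $\BBK_\mu[\nu]$) does match the paper's: the paper's invariant set $\mathscr{F}$ is precisely the set of $u$ with $|u|\lesssim d\,d_\Sigma^{-\am}\BBN_{2\am+1,1}[\varrho\nu]$ and $|\nabla u|\lesssim d_\Sigma^{-\am-1}\BBN_{2\am+1,1}[\varrho\nu]$, which in view of \eqref{KN2a1} is the same as your constraint $0\leq u\lesssim\varrho\,\BBK_\mu[\nu]$ together with a corresponding pointwise gradient bound. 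However, there is a genuine gap in how you propose to verify the invariance $T(\mathcal{E}_\lambda)\subset\mathcal{E}_\lambda$. Your set $\mathcal{E}_\lambda$ includes the \emph{pointwise} constraint $0\leq u\leq 2\varrho\,\BBK_\mu[\nu]$, so preserving it requires the pointwise inequality
\[
\BBG_\mu\big[(2\varrho\,\BBK_\mu[\nu])^p\,(\text{pointwise gradient bound})^q\big]\ \leq\ \varrho\,\BBK_\mu[\nu],
\]
and this cannot follow from the weighted $L^{p+q}$ estimates of your Step one nor from the weak-$L^p$ bounds on $\BBG_\mu$ and $\nabla\BBG_\mu$ (Theorems \ref{lpweakgrangreen}--\ref{lpweakgrangreen2}) of your Step two: those are integral estimates that produce norm control in $L^{p_*}_w$-type spaces, and in the regime covered by Theorem \ref{existth} one has in general $p+q\geq p_*$, so they do not even close the norm part of the iteration, much less the pointwise part. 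This is exactly why the hypothesis of Theorem \ref{existth} is a capacity condition rather than a size condition.

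The missing ingredient is the Kalton--Verbitsky equivalence, Proposition \ref{t2.1}, which is applied to the kernel $\CN_{2\am+1,1}$ and measure $\dd\omega=d^{p+1}d_\Sigma^{-\am(p+1)-(\am+1)q}\dx$ to produce Theorem \ref{sourceth}. The content one actually needs is the implication \emph{from} the capacity bound \eqref{con3-a} (item~4) \emph{to} the pointwise superharmonicity inequality \eqref{con2} (item~3), namely
\[
\BBN_{2\am+1,1}\Big[\BBN_{2\am+1,1}[\nu]^{p+q}d^{p+1}d_\Sigma^{-\am(p+1)-(\am+1)q}\Big]\leq C\,\BBN_{2\am+1,1}[\nu]\quad\text{a.e.,}
\]
which, once rewritten through \eqref{GN2a}--\eqref{KN2a1gran}, is precisely the pointwise estimate that makes $\mathds{H}(\mathscr{F})\subset\mathscr{F}$ for $\varrho$ small. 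An Adams--Hedberg capacity duality gives you the testing inequality \eqref{con1} (item~2), i.e.\ the integral estimate you have in mind, but the passage from there to the pointwise item~3 is a deep theorem of Kalton and Verbitsky, and it in turn requires verifying the quasi-metric inequality and the two doubling-type conditions for $\CN_{2\am+1,1}$ with the weight $d^{p+1}d_\Sigma^{-\am(p+1)-(\am+1)q}$ (this is the content of Lemmas \ref{ineq}, \ref{vol}, \ref{vol-2}, and it is also where the restriction $\am p+(\am+1)q<2+\am$ is used, to keep the exponent of $d_\Sigma$ above $-2-2\am$). Your proof as written treats this as a standard duality step, so it would not establish the theorem without importing this machinery.
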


It is noteworthy that condition \eqref{con3-a} may hold for $p,q$ beyond the subcritical range \eqref{subcriticalrange} even if the condition \eqref{integralcond} is violated. While the proof of Theorem \ref{existth} also utilizes the Schauder fixed point theorem, its core approach differs significantly from that of Theorem \ref{exist1}. A key ingredient here is an existence result in Theorem \ref{sourceth}  which relies on a careful and nontrivial adaptation of an abstract result by Kalton and Verbitsky \cite{KV} to our framework under the condition $\am p + (\am+1)q < 2 + \am$.

It is natural to ask whether condition \eqref{con3-a} could be simplified. The next result provides \textit{subcritical ranges} of $p,q$  for the existence of a solution of \eqref{problem:source-power} according to whether $\nu$ is concentrated on $\Sigma$ or on $\partial \Omega$. This is achieved by applying Theorem \ref{existth} and using equivalent criteria in Theorem \ref{sourceth}, taking into account particular critical exponents corresponding to the concentration of $\nu$ in Theorem \ref{lpweakgranmartin1}.

\begin{proposition} \label{th:existence-source-3}
Let $\xm<\frac{(N-2)^2}{4}$, $\xn\in\mathfrak{M}^+(\partial\xO\cup\xS)$, and $p \geq 0$, $q\geq0$ such that $p+q>1$ and $\am p+(\am+1)q<2+\am$. Assume one of the following conditions holds.
	
	(i) $\xn$ is compactly supported on $\xS$ and $(N-\am-2)p+(N-\am-1)q<N-\am$.

	(ii) $\xn$ is compactly supported on $\partial\xO$ and $(N-1)p+Nq<N+1$.

\noindent Then for $\varrho>0$ small enough, problem \eqref{problem:source-power} admits a nonnegative weak solution.	
\end{proposition}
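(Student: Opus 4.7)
The plan is to reduce Proposition \ref{th:existence-source-3} to verifying the capacity assumption \eqref{con3-a} of Theorem \ref{existth}, and then to exploit the fact that $\supp\nu$ is concentrated on a single boundary component to control this capacity via the weak-Lebesgue bounds on $\BBK_\mu[\nu]$ and its gradient provided by Theorem \ref{lpweakgranmartin1}.

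First I would apply Theorem \ref{sourceth}, which in the Kalton--Verbitsky spirit alluded to after Theorem \ref{existth} furnishes an equivalent reformulation of \eqref{con3-a} as a pointwise testing inequality of the rough shape
\bal
\BBG_\mu\!\left[\BBK_\mu[\nu]^p\,|\nabla\BBK_\mu[\nu]|^q\right] \le C\, \BBK_\mu[\nu] \quad \text{a.e. in } \Omega\setminus\Sigma,
\eal
or, dually, a weak-$L^r$ bound on $\BBK_\mu[\nu]^p|\nabla\BBK_\mu[\nu]|^q$ measured against the natural weight $\phi_\mu \asymp d\,d_\Sigma^{-\am}$. Since this equivalence is stated in full generality in Theorem \ref{sourceth} under the standing hypothesis $\am p+(\am+1)q<2+\am$, verifying either formulation is enough to invoke Theorem \ref{existth}.

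Next, I would use Theorem \ref{lpweakgranmartin1} to read off the critical weak-Lebesgue exponents of $\BBK_\mu[\nu]$ and $|\nabla\BBK_\mu[\nu]|$, which depend on whether $\supp\nu\subset\Sigma$ or $\supp\nu\subset\partial\Omega$. In case (i) these exponents are $s_1 = (N-\am)/(N-\am-2)$ and $t_1 = (N-\am)/(N-\am-1)$, arising from the homogeneity $K_\mu \asymp d_\Sigma^{-\ap}$ near $\Sigma$ with $\ap = N-k-2-\am$. In case (ii) they are the classical Poisson-type exponents $s_2 = (N+1)/(N-1)$ and $t_2 = (N+1)/N$. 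Applying the H\"older inequality for weak-$L^p$ spaces to $\BBK_\mu[\nu]^p|\nabla\BBK_\mu[\nu]|^q$ then yields an $L^{r,\infty}$ bound with $1/r = p/s_i + q/t_i$, and the assumed strict inequality $(N-\am-2)p+(N-\am-1)q<N-\am$ in case (i), resp. $(N-1)p+Nq<N+1$ in case (ii), is precisely the condition $p/s_i+q/t_i<1$, so $r>1$. Composing with the mapping properties of $\BBG_\mu$ from Theorems \ref{lpweakgrangreen}--\ref{lpweakgrangreen2} then delivers the required pointwise bound by $C\,\BBK_\mu[\nu]$.

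The main obstacle I anticipate is the passage from the weak-$L^p$ factors back to the pointwise testing inequality, since products of weak-$L^p$ functions lose a small amount of integrability compared with the strong scale. A truncation argument at dyadic levels of $d_\Sigma$ near $\Sigma$ and of $d$ near $\partial\Omega$, chosen to match the distinct homogeneities of $K_\mu$ on the two boundary components, combined with a careful interpolation of the weak bounds from Theorem \ref{lpweakgranmartin1}, should absorb this loss and yield the genuine inequality with the correct $\phi_\mu$-weighting. Once the testing inequality is in place, Theorem \ref{existth} applies directly and produces a nonnegative weak solution of \eqref{problem:source-power} for all sufficiently small $\varrho>0$.
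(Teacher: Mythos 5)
Your overall strategy (reduce to Theorem \ref{existth} via one of the equivalent conditions in Theorem \ref{sourceth}, then use the weak-Lebesgue estimates of Theorem \ref{lpweakgranmartin1}) is the right one, and your exponent bookkeeping correctly matches the strict inequalities in (i) and (ii). However, there is a genuine gap in the final step, and the route you chose is substantially harder than the one the paper takes.

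The concrete problem is the sentence ``Composing with the mapping properties of $\BBG_\mu$ from Theorems \ref{lpweakgrangreen}--\ref{lpweakgrangreen2} then delivers the required pointwise bound by $C\,\BBK_\mu[\nu]$.'' A weak-$L^r$ bound on the source $\BBK_\mu[\nu]^p|\nabla\BBK_\mu[\nu]|^q$ together with the fact that $\BBG_\mu$ maps $\GTM(\Omega\setminus\Sigma;\phi_\mu)$ into a weak-Lebesgue space does \emph{not} produce a pointwise inequality $\BBG_\mu[\cdot]\le C\,\BBK_\mu[\nu]$; you only ever get another weak-Lebesgue bound. You have, in effect, tried to verify condition~3 of Theorem \ref{sourceth} (the pointwise testing inequality), but the tools you invoke are only capable of delivering condition~2 (the integral testing inequality). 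The proposed dyadic truncation and interpolation do not repair this: the issue is a category mismatch between weak-type and pointwise conclusions, not a loss of a logarithm or an endpoint.

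What the paper actually does is to verify condition~2 of Theorem \ref{sourceth}, namely
\bal
\int_E \BBN_{2\am+1,1}[\1_E\xn]^{p+q}\,d^{p+1}d_\xS^{-\am(p+1)-(\am+1)q}\,\dx \le C\,\xn(E),
\eal
and this is much cheaper. Using \eqref{KN2a1} the integrand is rewritten (exactly, up to constants) as $d\,d_\xS^{-\am}\bigl((dd_\xS)^{-q/(p+q)}\BBK_\mu[\1_E\xn]\bigr)^{p+q}$, so the whole thing becomes $\|(dd_\xS)^{-q/(p+q)}\BBK_\mu[\1_E\xn]\|_{L^{p+q}(\Omega;\phi_\mu)}^{p+q}$. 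Now one applies Theorem \ref{lpweakgranmartin1} with the \emph{single} exponent $\gamma=q/(p+q)$: this is a weighted weak-Lebesgue bound for the one function $(dd_\xS)^{-\gamma}\BBK_\mu[\1_E\xn]$, so there is no product of weak-$L^p$ factors and no H\"older loss to absorb. The strict inequalities in (i)/(ii) say precisely that $p+q$ lies strictly below the weak exponent $\frac{N-\am}{N-\am-2+\gamma}$ (resp.\ $\frac{N+1}{N-1+\gamma}$), so the standard embedding $L_w^\kappa\subset L^{p+q}$ for $p+q<\kappa$ finishes the proof. In short: verify the integral testing inequality, not the pointwise one, and use the $\gamma$-weighted Martin estimate in one shot rather than separate bounds for $\BBK_\mu[\nu]$ and $|\nabla\BBK_\mu[\nu]|$ glued by weak H\"older.
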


We remark that the subcritical range in (i) coincides \eqref{subcriticalrange} with $p_*=\frac{N-\am}{N-\am-2}$ and $
q_*=\frac{N-\am}{N-\am-1}$, while the the subcritical range in (ii) coincides \eqref{subcriticalrange} with $p_*=\frac{N+1}{N-1}$ and $
q_*=\frac{N+1}{N}$. The task of showing existence results beyond the above ranges is more challenging. To do this, we introduce appropriate Bessel capacities restricted on $\Sigma$ and $\partial \Omega$ (see \eqref{Capsub} for the definition of such capacities $\mathrm{Cap}_{\gth,\kappa}^{\Gamma}$) which measure subsets of $\Sigma$ or $\partial \Omega$ in a subtle manner. These capacities are equivalent to the capacities used in Theorem \ref{existth} thanks to the delicate two-sided estimate in Lemma \ref{besov} (which holds under condition \eqref{3condition}). Consequently, we can employ Theorem \ref{existth} to obtain a sufficient condition \eqref{cap} in case $\nu$ is supported on $\Sigma$ or condition \eqref{cond:cap-2} in case $\nu$ is supported on $\partial \Omega$.

\begin{theorem} \label{th:existence-source-4}
	Let $k\geq1,$ $p,q\geq0$ be such that $p+q>1$ and
	\bel{3condition} \left\{ \BAL
	&\am p+(\am+1)q<N-k-\am,\\
	&\ap p+(\ap+1)q<2+\ap,\\
	&(N-\am-2)p+(N-\am-1)q>N-k-\am.
	\EAL \right. \ee
Assume that $\xn\in \mathfrak{M}^+(\partial\xO\cup\xS)$ is compactly supported on $\xS$ and satisfies
\bel{cap}
\nu(E) \leq C\mathrm{Cap}_{\vartheta,(p+q)'}^{\xS}(E)\quad\text{for any Borel set}\;E\subset\xS,
\ee
where
\bel{vartheta}
	\vartheta:=\frac{2+\ap -\ap p - (\ap+1)q}{p+q} .
\ee
Then for $\varrho>0$ small enough, problem \eqref{problem:source-power} admits a nonnegative weak solution.	
\end{theorem}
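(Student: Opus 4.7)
The plan is to reduce Theorem~\ref{th:existence-source-4} to Theorem~\ref{existth}: the hypothesis \eqref{cap} is an intrinsic Bessel-type capacity bound on $\xS$, whereas Theorem~\ref{existth} demands the global bound \eqref{con3-a} on $\overline{\xO}$; the bridge between the two is Lemma~\ref{besov}, which provides their two-sided equivalence under precisely the three conditions listed in \eqref{3condition}.

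First I would verify the structural hypotheses on $(p,q)$ required by Theorem~\ref{existth}. The second inequality in \eqref{3condition}, namely $\ap p + (\ap+1)q < 2+\ap$, rearranges to $\ap(p+q-1)<2-q$. Since $p+q>1$ and $\ap\geq H=(N-k-2)/2>0$ (recall $k<N-2$), this forces $2-q>0$ and also guarantees that $\vartheta$ in \eqref{vartheta} is strictly positive, so the capacity in \eqref{cap} is nondegenerate. Moreover, because $\am\leq\ap$ and $p+q-1>0$, the same inequality yields $\am(p+q-1)<2-q$, i.e.\ $\am p+(\am+1)q<2+\am$, which is exactly the hypothesis on $(p,q)$ appearing in Theorem~\ref{existth}.

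Next I would execute the capacity transfer. Fix an arbitrary Borel set $E\subset\overline{\xO}$. Since $\nu$ is compactly supported on $\xS$, we have $\nu(E)=\nu(E\cap\xS)$. Applying \eqref{cap} to $E\cap\xS\subset\xS$, then invoking Lemma~\ref{besov} (applicable because \eqref{3condition} holds), and finally using monotonicity of the target capacity under set inclusion yields
\[
\nu(E)\;\leq\;C\,\mathrm{Cap}_{\vartheta,(p+q)'}^{\xS}(E\cap\xS)\;\lesssim\;\mathrm{Cap}_{\BBN_{2\am+1,1},(p+q)'}^{p+1,\,-\am(p+1)-(\am+1)q}(E\cap\xS)\;\leq\;\mathrm{Cap}_{\BBN_{2\am+1,1},(p+q)'}^{p+1,\,-\am(p+1)-(\am+1)q}(E),
\]
which is precisely the assumption \eqref{con3-a}. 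Theorem~\ref{existth} then supplies a threshold $\varrho_0>0$ such that for every $\varrho\in(0,\varrho_0)$ problem \eqref{problem:source-power} admits a nonnegative weak solution.

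The reduction above is essentially bookkeeping; the real content of the theorem lies in Lemma~\ref{besov}. The three inequalities in \eqref{3condition}, together with the restriction $k\geq 1$ (which is needed so that $\xS$ has positive dimension and carries a nontrivial intrinsic Besov-type capacity), are exactly what the proof of Lemma~\ref{besov} requires in order to compare the intrinsic $\xS$-capacity with the ambient weighted capacity from both sides. That equivalence is where the genuine technical difficulty sits, but once it is in hand the proof of Theorem~\ref{th:existence-source-4} is just an application of Theorem~\ref{existth}.
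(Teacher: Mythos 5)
Your proposal follows essentially the same route as the paper: reduce to Theorem~\ref{existth} by verifying that hypothesis \eqref{con3-a} follows from \eqref{cap} via a capacity comparison. One mild imprecision: Lemma~\ref{besov} as stated is a two-sided \emph{integral} estimate relating $\BBK_\mu[\nu]$-type integrals to $\BBB_{k,\vartheta}[\nu]$-type integrals; the capacity equivalence $\mathrm{Cap}_{\BBN_{2\am+1,1},(p+q)'}^{p+1,-\am(p+1)-(\am+1)q}(E)\asymp\mathrm{Cap}_{\vartheta,(p+q)'}^{\xS}(E)$ for $E\subset\xS$ is not the lemma itself but is derived from it in the paper's proof by localizing through the charts $(O_i,K_i,T_i)$, applying Lemma~\ref{besov} in each chart, and then invoking the dual characterization \eqref{dualcap}. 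You flag this as the locus of the real difficulty, so the overstatement is not fatal, but your proof would need to supply that bridge rather than cite Lemma~\ref{besov} alone. Your explicit check that $\ap p+(\ap+1)q<2+\ap$ together with $\am\leq\ap$ and $p+q>1$ forces $2-q>0$ and $\am p+(\am+1)q<2+\am$ (so Theorem~\ref{existth}'s hypothesis holds, noting also $\mu\leq H^2<(\frac{N-2}{2})^2$ since $k\geq1$) is a useful detail the paper leaves implicit.
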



\begin{theorem} \label{th:existence-source-5}
Let $\xm<\frac{(N-2)^2}{4}$, $p \geq 0$, $q\geq0$ such that $p+q>1,$ $q<2$ and $\am p+(\am+1)q<2+\ap$.
Assume  $\xn\in \mathfrak{M}^+(\partial\xO\cup\xS)$ is compactly supported on $\partial\xO$ and satisfies
	\bel{cond:cap-2}
	\nu(E) \leq C\mathrm{Cap}_{\frac{2-q}{p+q},(p+q)'}^{\partial\xO}(E)\quad\text{for any Borel set}\;E\subset\partial\xO.
	\ee
Then for $\varrho>0$ small enough, problem \eqref{problem:source-power} admits a nonnegative weak solution.	
\end{theorem}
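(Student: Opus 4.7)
The strategy mirrors that of Theorem \ref{th:existence-source-4}: we reduce to an application of Theorem \ref{existth} by translating the Bessel capacity condition \eqref{cond:cap-2} on $\partial\Omega$ into the dual capacity condition \eqref{con3-a}. The crucial simplification in the present setting is that $\nu$ is compactly supported on $\partial\Omega$, which is disjoint from $\Sigma$, so the Hardy potential $\mu d_\Sigma^{-2}$ is bounded in a neighborhood of $\supp \nu$. Consequently, the Martin kernel satisfies $K_\mu(x,y) \asymp d(x)|x-y|^{-N}$ for $y\in \partial\Omega$ and $x$ near $\partial\Omega$, while $\phi_\mu \asymp d$ there, so the kernel estimates reduce to essentially classical ones.

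The first step is to establish a two-sided equivalence, in the spirit of Lemma \ref{besov}, between the dual capacity appearing in \eqref{con3-a} and the Bessel capacity in \eqref{cond:cap-2}, restricted to subsets of $\partial\Omega$:
$$
\mathrm{Cap}_{\BBN_{2\am+1,1},(p+q)'}^{p+1,\,-\am(p+1)-(\am+1)q}(E) \asymp \mathrm{Cap}_{\frac{2-q}{p+q},(p+q)'}^{\partial\Omega}(E), \quad E\subset \partial\Omega \text{ Borel}.
$$
The smoothness index $\tfrac{2-q}{p+q}$ arises as the formal analog of \eqref{vartheta} under the replacement $\ap\rightsquigarrow 0$, reflecting the absence of Hardy-type singularity along $\partial\Omega$ and the scaling $|\nabla_x K_\mu(x,y)|\asymp |x-y|^{-N}$ for $y\in\partial\Omega$. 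The assumption $q<2$ ensures this index is strictly positive, while the hypothesis $\am p + (\am+1)q < 2 + \ap$ plays the role of the second inequality in \eqref{3condition} and guarantees that the capacity is non-degenerate in the relevant regime.

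With the equivalence in hand, hypothesis \eqref{cond:cap-2} immediately yields \eqref{con3-a}, since the dual capacity of Borel sets disjoint from $\partial\Omega$ does not constrain $\nu$ (as $\nu$ vanishes outside $\partial\Omega$). Applying Theorem \ref{existth} then produces the desired nonnegative weak solution of \eqref{problem:source-power} for all sufficiently small $\varrho>0$.

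The main obstacle lies in the capacity equivalence above. Its proof will require a careful comparison of the classical Bessel potentials on the $(N-1)$-dimensional submanifold $\partial\Omega$ with the combined boundary behavior of the Green function $\BBG_\mu$ and the Martin kernel $\BBK_\mu$, together with a duality argument relating the Lebesgue-type dual capacity to the surface Bessel capacity. In contrast to Lemma \ref{besov}, the comparison on $\partial\Omega$ benefits from the essentially classical nature of $L_\mu$ there, but still demands precise tracking of the weight $\phi_\mu \asymp d$ and of the relative growth rates of $\BBG_\mu$ and $\BBK_\mu$ in a boundary layer around $\partial\Omega$, so that the exponent $\tfrac{2-q}{p+q}$ emerges with the correct sharp constants.
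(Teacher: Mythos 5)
Your strategy is exactly the paper's: reduce to Theorem \ref{existth} by showing the capacity equivalence
$\mathrm{Cap}_{\BBN_{2\am+1,1},(p+q)'}^{p+1,-\am(p+1)-(\am+1)q}(E)\asymp\mathrm{Cap}_{\frac{2-q}{p+q},(p+q)'}^{\partial\xO}(E)$ for Borel $E\subset\partial\Omega$, exploiting the fact that near $\partial\Omega$ the Hardy weight is bounded and the kernels behave classically. Your reading of the exponent $\frac{2-q}{p+q}$ as the $\ap\rightsquigarrow 0$ limit of \eqref{vartheta}, and of the role of the positivity condition $q<2$, is correct.

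The one substantive piece you defer as the ``main obstacle'' — the proof of the capacity equivalence itself — is precisely what the paper dispatches in three short moves, and none of them requires a new analogue of Lemma \ref{besov}. Setting $\beta=\am(p+1)+(\am+1)q$, the paper first localizes: for $\lambda$ compactly supported on $\partial\Omega$, the identity \eqref{45} adapts to give
$\int_\Omega \BBK_\mu[\lambda]^{p+q}(dd_\Sigma)^{-q}\phi_\mu\,\dx\asymp\sum_i\int_{O_i}\BBK_\mu[\1_{K_i}\lambda]^{p+q}(dd_\Sigma)^{-q}\phi_\mu\,\dx$
over the finitely many boundary charts $O_i$. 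Second, since each $O_i$ stays away from $\Sigma$, the factor $d_\Sigma^{-\beta}$ is bounded above and below there, so it can be dropped, leaving $\int_{O_i}\BBN_{2\am+1,1}[\1_{K_i}\lambda]^{p+q}d^{p+1}\,\dx$. Third, this is the classical boundary-weighted expression handled by \cite[Proposition 2.9]{BHV} (with $\alpha=\beta=1$, $s=(p+q)'$, $\alpha_0=p+1$), which yields the equivalence with the $(N-1)$-dimensional Bessel capacity of smoothness order $\frac{2-q}{p+q}$; the duality formula \eqref{dualcap} then converts this into the stated capacity comparison. So the ``careful comparison'' you anticipated need not be re-derived from scratch — it is an off-the-shelf result, and your proposal becomes complete once you add the localization step, the observation that $d_\Sigma^{-\beta}$ trivializes on the charts, and the citation.
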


It is worth pointing out that conditions \eqref{cap} and \eqref{cond:cap-2} are fulfilled if $p,q$ satisfy respectively (i) and (ii) in Proposition \ref{th:existence-source-3}. We also notice that conditions \eqref{cap} and \eqref{cond:cap-2} are more general than \cite[condition 4 in Theorem 1.5 and condition 4 in Proposition 1.6]{GkiNg_source} respectively. 
 
The case $\xS=\{0\}$ and $\xm= \frac{(N-2)^2}{4}$ is treated separately, involving a perturbation. A counter part of Theorem \ref{existth} in this case is stated below.
\begin{theorem}\label{existthcr}
Let $\xS=\{0\}$, $\xm= \frac{(N-2)^2}{4}$, $p\geq 0$, $q\geq0$ such that $p+q>1$ and $(N-2)p+Nq<N+2$, and $0<\varepsilon <\min(N-2,2,\frac{N+2- (N-2)p - Nq}{2(p+q)})$. Assume $\xn\in\mathfrak{M}^+(\partial\xO\cup \{0\})$ satisfies
	\ba \label{H2-cap-1-0} 
	\xn(E)\leq C\, \mathrm{Cap}_{\BBN_{N-1-\xe,1},(p+q)'}^{p+1,-\frac{N-2}{2}(p+1)-\frac{N}{2}q}(E) \quad \text{for any Borel set } E \subset \overline{\xO}.
	\ea
	Then for $\varrho>0$ small enough, problem \eqref{problem:source-power} admits a nonnegative weak solution (see \eqref{dualcap} for the definition of capacities $\text{Cap}_{\BBN_{\alpha,\sigma},s}^{b,\theta}(E)$).
	
\end{theorem}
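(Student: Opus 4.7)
The plan is to adapt the strategy of Theorem \ref{existth} while compensating for the criticality $\mu=\frac{(N-2)^2}{4}=H^2$ (where the two roots $\am$ and $\ap$ collapse to $H=\frac{N-2}{2}$) through the small perturbation parameter $\varepsilon>0$. At criticality, the Martin kernel and Green function acquire logarithmic corrections: roughly $K_\mu(x,0)\asymp |x|^{-H}\log(1/|x|)$ near $0$, rather than the purely polynomial behavior $|x|^{-\am}$ seen in the subcritical regime. These logarithms preclude a direct application of the subcritical framework, but they can be absorbed at the price of losing an arbitrarily small $\varepsilon$ in the decay rate, dominating the critical kernels and their gradients by kernels associated to the auxiliary subcritical exponent $\widetilde{\alpha}:=H-\frac{\varepsilon}{2}$. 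This reduces the problem to an effectively subcritical one in which the machinery of Theorem \ref{existth} applies, provided the shifted subcriticality $\widetilde{\alpha}p+(\widetilde{\alpha}+1)q<2+\widetilde{\alpha}$, i.e.\ $(N-2-\varepsilon)p+(N-\varepsilon)q<N+2-\varepsilon$, still holds; this is precisely what the upper bound $\varepsilon<\frac{N+2-(N-2)p-Nq}{2(p+q)}$ enforces.

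Concretely, I would proceed in four steps. First, using the sharp asymptotics of $G_\mu$ and $K_\mu$ from \cite{GkiNg_linear} for $\Sigma=\{0\}$ and $\mu=H^2$, derive $\varepsilon$-perturbed pointwise upper bounds and weak-$L^p$ estimates for $\BBK_\mu[\nu]$, $|\nabla\BBK_\mu[\nu]|$, $\BBG_\mu[\tau]$, and $|\nabla\BBG_\mu[\tau]|$, replacing the critical weight $d\,|x|^{-H}$ by its $\varepsilon$-perturbed analog $d\,|x|^{-H+\varepsilon/2}$. Second, set up the nonlinear map
\begin{equation*}
T(v):=\BBG_\mu\!\left[|v+\varrho\BBK_\mu[\nu]|^p\,|\nabla(v+\varrho\BBK_\mu[\nu])|^q\right]
\end{equation*}
on a suitable closed convex subset of a weighted Lebesgue space and verify its continuity and compactness, as in the proof of Theorem \ref{existth}. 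Third, adapt the dual Kalton--Verbitsky criterion underlying Theorem \ref{sourceth}, now with parameter $\widetilde{\alpha}$ in place of $\am$, in order to translate the capacity assumption \eqref{H2-cap-1-0} into the nonlinear integral estimate needed to map a small ball into itself. Finally, invoke Schauder's fixed point theorem to obtain a nonnegative fixed point $v$ for $\varrho$ small enough, and set $u:=v+\varrho\BBK_\mu[\nu]$.

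The main obstacle is to propagate the $\varepsilon$-loss consistently through the two-sided kernel estimates, the weak-$L^p$ gradient bounds in the spirit of Theorems \ref{lpweakgrangreen}--\ref{lpweakgranmartin1}, and the dual capacity inequality \eqref{H2-cap-1-0}. In particular, one must reconcile the unperturbed weight exponents $-\frac{N-2}{2}(p+1)-\frac{N}{2}q$ appearing in the capacity with the perturbed Bessel index $N-1-\varepsilon$, and verify that the Kalton--Verbitsky duality still yields the required boundedness despite this mismatch. The auxiliary constraints $\varepsilon<N-2$ and $\varepsilon<2$ keep the Bessel kernel $\mathbb{N}_{N-1-\varepsilon,1}$ and all intermediate weights in the admissible range, so that once this bookkeeping is carried out, the remainder of the argument closely parallels the proof of Theorem \ref{existth}.
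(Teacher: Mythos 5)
Your high-level strategy matches the paper's: dominate the logarithmically corrected critical kernels by an $\varepsilon$-perturbed $\CN$-kernel, run the Kalton--Verbitsky duality (Theorem \ref{sourceth2}), and close with Schauder as in Theorem \ref{existth}. However, you misallocate the perturbation. The paper perturbs \emph{only} the kernel, writing
\[
K_{H^2}(x,z)\lesssim d(x)\,|x|^{-\frac{N-2}{2}}\,\CN_{N-1-\varepsilon,1}(x,z),\qquad
G_{H^2}(x,y)\lesssim d(x)d(y)(|x||y|)^{-\frac{N-2}{2}}\,\CN_{N-1-\varepsilon,1}(x,y),
\]
keeping the eigenfunction-type weight prefactor at the unperturbed exponent $H=\frac{N-2}{2}$ (this is forced: $\phi_{H^2}\asymp d\,|x|^{-H}$, and it is the weight that enters the factorization). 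You instead replace the weight by $d\,|x|^{-\widetilde\alpha}$ with $\widetilde\alpha=H-\varepsilon/2$, i.e.\ you perturb both the weight and the kernel. Both dominations are valid pointwise (since $|\log|x||\lesssim|x|^{-\varepsilon/2}$), but they are not interchangeable in the KV framework: with your choice, the measure $\dd\omega$ in Proposition \ref{t2.1} becomes $d(x)^{p+1}|x|^{-\widetilde\alpha(p+1)-(\widetilde\alpha+1)q}\,\dx$, and the capacity that emerges from condition 4 is
$\mathrm{Cap}_{\BBN_{N-1-\varepsilon,1},(p+q)'}^{\,p+1,\,-\widetilde\alpha(p+1)-(\widetilde\alpha+1)q}$,
whose $d_\Sigma$-exponent differs from the one in \eqref{H2-cap-1-0} by $\frac{\varepsilon}{2}(p+q)$. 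You notice this ``mismatch'' and defer it as bookkeeping, but it is substantive: the hypothesis \eqref{H2-cap-1-0} is stated for the \emph{un}perturbed exponents $-\frac{N-2}{2}(p+1)-\frac{N}{2}q$, and you have not shown the two capacities are comparable (a priori they are not). The fix is simply not to shift the weight: use $H$ in the weight, put the entire $\varepsilon$ into the kernel index, and then the capacity that Proposition \ref{t2.1} hands you is exactly the one in \eqref{H2-cap-1-0}. Apart from this misallocation, the remaining steps (weak-$L^p$ bounds for $\nabla\BBG_\mu$ and $\nabla\BBK_\mu$, Schauder) are set up as the paper intends.
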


The proof of Theorem \ref{existthcr} relies on equivalent criteria for existence results in Theorem \ref{sourceth2} for which assumption on $\varepsilon$ is needed.

When $\nu$ is concentrated at $0$ or on $\partial \Omega$, subcritical ranges of $p,q$ for the existence of a weak solution to problem \eqref{problem:source-power} are described in the next two theorems.

\begin{proposition} \label{th:existence-H2-2}
Let $\xS=\{0\}$, $\xm= \frac{(N-2)^2}{4}$, $p\geq 0$, $q\geq0$ such that  $p+q>1$ and $(N-2)p+Nq<N+2$. Assume one of the following conditions holds
	
	(i) $\nu=\delta_0$, the Dirac measure concentrated at $0$,
	
	(ii) $\xn$ is compactly supported on $\partial\xO$ and
	$(N-1)p+Nq<N+1$.
	
	\noindent Then for $\varrho>0$ small enough, problem \eqref{problem:source-power} admits a nonnegative weak solution.
\end{proposition}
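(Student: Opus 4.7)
The plan is to deduce both assertions from Theorem~\ref{existthcr} by verifying that, under the stated hypotheses, the measure $\nu$ satisfies the capacitary bound \eqref{H2-cap-1-0} for a suitable admissible parameter
$$\varepsilon\in\Bigl(0,\min\Bigl(N-2,\;2,\;\tfrac{N+2-(N-2)p-Nq}{2(p+q)}\Bigr)\Bigr).$$
This interval is nonempty thanks to the standing assumption $(N-2)p+Nq<N+2$, so $\varepsilon$ may be chosen freely inside it as the analysis requires. Once \eqref{H2-cap-1-0} is verified, Theorem~\ref{existthcr} delivers the desired weak solution for $\varrho>0$ small. So the task reduces to the capacitary verification in each case.

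\emph{Case (i).} When $\nu=\delta_0$, the inequality $\delta_0(E)\le C\,\mathrm{Cap}_{\BBN_{N-1-\varepsilon,1},(p+q)'}^{p+1,-\frac{N-2}{2}(p+1)-\frac{N}{2}q}(E)$ for Borel $E\subset\overline\Omega$ is non-trivial only when $0\in E$, and by monotonicity of capacity it suffices to prove that the capacity of $\{0\}$ is strictly positive. Using the dual formulation \eqref{dualcap}, I would exhibit a nonnegative $f$ supported in a small ball around $0$ whose Bessel convolution $\BBN_{N-1-\varepsilon,1}*f$ satisfies the pointwise admissibility at $0$ while keeping the weighted $L^{(p+q)'}$-norm finite. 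Using the asymptotics $\BBN_{N-1-\varepsilon,1}(x)\sim|x|^{-1-\varepsilon}$ as $x\to 0$, this reduces to an integrability check near $0$ involving the weight $|x|^{-\frac{N-2}{2}(p+1)-\frac{N}{2}q}$; the upper bound $\varepsilon<\frac{N+2-(N-2)p-Nq}{2(p+q)}$ imposed by Theorem~\ref{existthcr} is precisely what ensures convergence, giving $\mathrm{Cap}(\{0\})>0$.

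\emph{Case (ii).} When $\mathrm{supp}(\nu)\subset\partial\Omega$, the weight $|x|^{-\frac{N-2}{2}(p+1)-\frac{N}{2}q}$ in the capacity is bounded above and below by positive constants in a neighborhood of $\mathrm{supp}(\nu)$, since $0\in\Sigma\subset\Omega$ lies at positive distance from $\partial\Omega$. Hence, for subsets of $\partial\Omega$, the weighted capacity in \eqref{H2-cap-1-0} is comparable to the standard Bessel capacity $\mathrm{Cap}_{\BBN_{N-1-\varepsilon,1},(p+q)'}^{\partial\Omega}$ on $\partial\Omega$. The subcritical threshold for a Radon measure on the $(N-1)$-dimensional manifold $\partial\Omega$ to satisfy this Bessel-capacitary inequality corresponds to $(N-1-\varepsilon)(p+q)'>N-1-1=N-2$ after accounting for the codimension; working out this threshold (analogously to the argument producing (ii) of Proposition~\ref{th:existence-source-3} from Theorem~\ref{existth}) gives back exactly the hypothesis $(N-1)p+Nq<N+1$, which allows $\varepsilon$ to be chosen small enough that the inequality holds uniformly for all Borel $E\subset\partial\Omega$.

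The main obstacle is Case (i): the capacity weight $|x|^{-\frac{N-2}{2}(p+1)-\frac{N}{2}q}$ is singular at $0$ and competes with the singularity of the Bessel kernel there, so the positivity of $\mathrm{Cap}(\{0\})$ is a delicate balance that hinges entirely on the perturbation parameter $\varepsilon$ lying below $\frac{N+2-(N-2)p-Nq}{2(p+q)}$. This is exactly the range produced by the basic subcritical assumption $(N-2)p+Nq<N+2$, so compatibility with Theorem~\ref{existthcr} holds and the capacitary inequality \eqref{H2-cap-1-0} can be verified in both cases, completing the proof.
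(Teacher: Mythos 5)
Your overall strategy---reduce both cases to one of the equivalent conditions in Theorem \ref{sourceth2} (which underlies Theorem \ref{existthcr})---is the right framework, and your treatment of Case (i) is essentially correct. There you aim to show $\mathrm{Cap}(\{0\})>0$, i.e.\ condition (4) of Theorem \ref{sourceth2}; via the dual description \eqref{dualcap} applied to $\tau=\delta_0$, this is exactly the finiteness of
\[
\int_\Omega \CN_{N-1-\varepsilon,1}(x,0)^{p+q}\,d(x)^{p+1}|x|^{-\frac{N-2}{2}(p+1)-\frac{N}{2}q}\,\dx
\asymp \int_\Omega |x|^{-\frac{N-2}{2}(p+1)-\frac{N}{2}q-\varepsilon(p+q)}\,\dx ,
\]
and the paper checks this same integral, only organized as a verification of condition (2), $\int_E\BBN_{N-1-\varepsilon,1}[\1_E\delta_0]^{p+q}d^{p+1}|x|^{-\cdots}\,\dx\leq C\delta_0(E)$. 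Both are identical computations, and the upper bound on $\varepsilon$ is precisely the integrability condition.

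Case (ii) has a genuine gap. The threshold you state, $(N-1-\varepsilon)(p+q)'>N-1-1=N-2$, is vacuous: rewriting it one gets $(1-\varepsilon)(p+q)>-(N-2)$, which always holds, and it does not produce $(N-1)p+Nq<N+1$; so ``working out this threshold gives back exactly the hypothesis'' is an unsupported leap. The object $\mathrm{Cap}_{\BBN_{N-1-\varepsilon,1},(p+q)'}^{\partial\Omega}$ is also not defined in the paper, and the relevant boundary capacity is not obtained by merely restricting the kernel to $\partial\Omega$ and discarding the weight $|x|^{-\cdots}$. After localizing away from the origin (so the $|x|$-weight is harmless, as you correctly observe), one still must integrate $\CN_{N-1-\varepsilon,1}(x,z)^{p+q}d(x)^{p+1}$ across the layer near $\partial\Omega$; this transversal integration trades the order $N-1-\varepsilon$ for a Bessel order $\frac{2-q}{p+q}$ on the $(N-1)$-dimensional manifold $\partial\Omega$, and the correct singleton positivity criterion is $\frac{2-q}{p+q}\cdot(p+q)'>N-1$, which is equivalent to $(N-1)p+Nq<N+1$. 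This is the content of Lemma \ref{besov}/\cite[Proposition 2.9]{BHV} as used in Theorem \ref{th:existence-H2-3}, not a simple codimension count. The paper instead avoids capacities entirely in Case (ii): it verifies condition (2) of Theorem \ref{sourceth2} directly via \eqref{1cr} and the weak Lebesgue estimate \eqref{estgranmartin1} for $(dd_\Sigma)^{-\gamma}\BBK_\mu[\nu]$ with $\gamma=\frac{q}{p+q}$, and the hypothesis $(N-1)p+Nq<N+1$ enters exactly as the condition $p+q<\frac{N+1}{N-1+\gamma}$ needed to pass from $L_w^{\frac{N+1}{N-1+\gamma}}$ to $L^{p+q}$. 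Either route works, but your sketch would need the honest transversal-integration computation (or the direct weak-$L^p$ bound) in place of the stated threshold.
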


\begin{theorem} \label{th:existence-H2-3}
Let $\xS=\{0\}$, $\xm= \frac{(N-2)^2}{4}$, $p,q\geq0$ such that $p+q>1$, $q<2$ and $(N-2)p+Nq<N+2$. Assume $\xn\in \mathfrak{M}^+(\partial\xO\cup \{0\})$ is compactly supported on $\partial\xO$ and satisfies
	\bal 
	\nu(E) \leq C\mathrm{Cap}_{\frac{2-q}{p+q},(p+q)'}^{\partial\xO}(E)\quad\text{for any Borel set}\;E\subset\partial\xO.
	\eal
Then for $\varrho>0$ small enough, problem \eqref{problem:source-power} admits a nonnegative weak solution.
\end{theorem}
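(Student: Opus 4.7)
The plan is to reduce Theorem \ref{th:existence-H2-3} to Theorem \ref{existthcr} by showing that, when $\nu$ is compactly supported on $\partial\xO$, the Bessel-capacity hypothesis \eqref{cond:cap-2} implies the weighted-capacity hypothesis \eqref{H2-cap-1-0}. Since $(N-2)p+Nq<N+2$, I first fix
\[ \varepsilon\in\Bigl(0,\min\bigl(N-2,2,\tfrac{N+2-(N-2)p-Nq}{2(p+q)}\bigr)\Bigr), \]
so that Theorem \ref{existthcr} is applicable with the present $p$, $q$, $\xm$; it remains to verify its capacity condition for $\nu$.

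The core step is the capacity comparison on $\partial\xO$: for every Borel $E\subset\partial\xO$,
\[ \mathrm{Cap}^{\partial\xO}_{\frac{2-q}{p+q},(p+q)'}(E) \leq C\,\mathrm{Cap}_{\BBN_{N-1-\varepsilon,1},(p+q)'}^{p+1,-\frac{N-2}{2}(p+1)-\frac{N}{2}q}(E). \]
I would establish this by a variant of Lemma \ref{besov} adapted to the critical Hardy regime and to measures carried by $\partial\xO$. For $y\in\partial\xO$ and $\xm=\tfrac{(N-2)^2}{4}$, the Martin kernel $K_\xm(x,y)$ is comparable to the classical Poisson kernel weighted by an $|x|$-power reflecting $\ei\asymp d\,|x|^{-(N-2)/2}$ near the origin; plugging these bounds into the dual definition of the weighted capacity and integrating out the normal variable reduces the weighted Bessel kernel to a standard Bessel kernel on $\partial\xO$ of order $\frac{2-q}{p+q}$. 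The assumptions $q<2$ and $p+q>1$ are precisely what make this Bessel index positive and the resulting capacity nondegenerate. Combining the comparison with \eqref{cond:cap-2}, and using that $\supp\nu\subset\partial\xO$ so the inequality extends trivially to all Borel subsets of $\overline{\xO}$, yields \eqref{H2-cap-1-0}; Theorem \ref{existthcr} then furnishes a nonnegative weak solution of \eqref{problem:source-power} for $\varrho>0$ sufficiently small.

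The main obstacle is the capacity comparison in the critical regime. In the subcritical case $\xm<\tfrac{(N-2)^2}{4}$ the analogous step underlying Theorem \ref{th:existence-source-5} goes through the two-sided estimate of Lemma \ref{besov} since $\am<\ap$; here, however, $\am=\ap=\tfrac{N-2}{2}$, and the Martin kernel may carry borderline logarithmic corrections at the origin. The perturbation $\varepsilon$ is introduced in the index $\BBN_{N-1-\varepsilon,1}$ to absorb these corrections, and the explicit threshold $\tfrac{N+2-(N-2)p-Nq}{2(p+q)}$ in the admissible range for $\varepsilon$ is exactly what keeps the comparison constant uniform while preserving the dominant Bessel exponent $\frac{2-q}{p+q}$ on $\partial\xO$. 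Once this comparison is secured, the remainder of the proof is a routine invocation of Theorem \ref{existthcr}.
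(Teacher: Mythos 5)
Your overall reduction is the right one — invoke Theorem \ref{existthcr} after verifying its capacity condition \eqref{H2-cap-1-0} — but the plan for the crucial capacity comparison is misdirected, and you miss the simplification that actually makes the proof short.

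The tool you reach for, ``a variant of Lemma \ref{besov}'', is the wrong one. Lemma \ref{besov} is built for measures carried by the $k$-dimensional submanifold $\Sigma$ ($k\ge 1$): its proof sets up local coordinates on $\Sigma$, integrates in the $(N-k)$-dimensional normal variable, and crucially exploits the three inequalities in \eqref{3condition}, all of which concern $\am$, $\ap$ and $k$. None of this is adapted to $\partial\Omega$, which is an $(N-1)$-dimensional boundary; for $\partial\Omega$-supported measures the paper's machinery (in the proof of Theorem \ref{th:existence-source-5}) instead goes through \cite[Proposition 2.9]{BHV}. Developing a ``boundary variant of Lemma \ref{besov}'' is both unnecessary and not obviously routine.

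The step you label as the ``main obstacle'' — handling the critical regime and the $\varepsilon$-perturbation — in fact dissolves once you observe what the support hypothesis gives you. Since $\nu$ is compactly supported on $\partial\Omega$, every $y\in\supp\nu$ satisfies $|y|\asymp 1$, so for the kernel $\CN_{N-1-\xe,1}$ from \eqref{Nas} (with $\Sigma=\{0\}$, so $d_\Sigma(\cdot)=|\cdot|$) one has $\max\{|x-y|,|x|,|y|\}\asymp 1$ uniformly in $x\in\overline\Omega$ and $y\in\supp\nu$. Hence $\BBN_{N-1-\xe,1}[\nu]\asymp\BBN_{0,1}[\nu]$, and the $\varepsilon$-dependence disappears entirely. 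The capacity equivalence
\begin{align*}
\mathrm{Cap}_{\BBN_{N-1-\xe,1},(p+q)'}^{p+1,\,-\frac{N-2}{2}(p+1)-\frac{N}{2}q}(E)\asymp \mathrm{Cap}_{\frac{2-q}{p+q},(p+q)'}^{\partial\xO}(E),\quad E\subset\partial\Omega,
\end{align*}
then follows by repeating the proof of \eqref{Cap-equi-2} in Theorem \ref{th:existence-source-5} verbatim — there is no ``borderline logarithmic correction'' to absorb at this stage, because for $\xi\in\partial\Omega$ the Martin kernel $K_\mu(x,\xi)$ carries no logarithm (the log appears only at $\xi=0$, cf.\ \eqref{Martinest1-b}). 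Your narrative about $\varepsilon$ being chosen ``to absorb logarithmic corrections in the Martin kernel at $\partial\Omega$'' therefore mischaracterizes both where the log arises (in $G_{H^2}$ and in $K_\mu(\cdot,0)$, not in $K_\mu(\cdot,\xi)$ for $\xi\in\partial\Omega$) and why $\varepsilon$ is harmless here (it washes out). In short: the sketch is broadly pointed at the right target, but the lemma you cite would not deliver the comparison, and the actual argument is much simpler than you anticipate.
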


We note that when $\mu=(\frac{N-2}{2})^2$, then $\am=\frac{N-2}{2}$, hence both conditions $\am p+(\am+1)q<2+\am$ and $(N-\am-2)p+(N-\am-1)q<N-\am$ in Theorem \ref{th:existence-source-3} reduce to $(N-2)p+Nq<N+2$. Therefore, Theorem \ref{th:existence-H2-3} is in accordance with Theorem \ref{th:existence-source-3}, and both of them together provide the subcritical ranges of $p,q$ \textit{for any} $\mu \leq (\frac{N-2}{2})^2$. Similarly, the existence result in supercritical ranges of $p,q$ when $\nu$ is concentrated on $\partial \Omega$ holds for any $\mu \leq (\frac{N-2}{2})^2$ thanks to Theorem \ref{th:existence-source-5} and Theorem \ref{th:existence-H2-3}.

Let us give some remark regarding the comparison between our results and relevant results in the literature. Theorem \ref{exist1} generalizes \cite[Theorem 1.3 (iii) and (iv)]{GkiNg_source} and is a counterpart of  \cite[Theorem 1.9]{GkiNg_2020} and \cite[Theorem 1.3]{GP_2024}. Moreover, Theorems \ref{th:existence-source-4} and  \ref{th:existence-source-5} cover \cite[Theorem 1.5 and Proposition 1.6 with $\tau=0$]{GkiNg_source}. In addition, our existence results are new even in case $\Sigma=\{0\}$. 

\medskip

\noindent \textbf{Organization of the paper.} In Section \ref{pre}, we give the geometric properties of $\Sigma$ and recall some background related to  the Green function and the Martin kernel of $-L_\mu$. In Section \ref{sec:weakLp}, we prove estimates on the gradient of the Green function and the Martin kernel in weighted weak Lebesgue spaces. In section \ref{subcritical source}, we give the proof of Theorem \ref{exist1}. The proofs of all the existence results for the case $g(u,|\nabla u|)=|u|^p|\nabla u|^q$ (Theorems \ref{existth}--\ref{th:existence-H2-3}) are provided in Section \ref{supercritical source}. Finally, we recall briefly the properties of the first eigenpair of $-L_\mu$ in Appendix \ref{subsect:eigen} and the definition of $L_\mu$-harmonic measure in Appendix \ref{appendix:har-measure}.

\medskip

\noindent \textbf{Acknowledgement.} K. T. Gkikas acknowledges support by the Hellenic Foundation for Research and Innovation
(H.F.R.I.) under the “2nd Call for H.F.R.I. Research Projects to support Post-Doctoral Researchers” (Project Number: 59).
 P.-T. Nguyen was supported by Czech Science Foundation, Project GA22-17403S. Part of the work was carried out during the visit of P.-T. Nguyen to the Vietnam Institute for Advanced Study in Mathematics, Vietnam, and the University of the Aegean, Greece. P-T. Nguyen gratefully acknowledges the institutions' for the hospitality. 
 
\section{Preliminaries} \label{pre}

\subsection{Notations} \label{subsec:notations} We list below notations that are frequently used in the paper.

$\bullet$ Let $\phi$ be a positive continuous function in $\Omega \setminus \Sigma$ and $\kappa \geq 1$. Let $L^\kappa(\Omega;\phi)$ be the space of functions $f$ such that
\bal \| f \|_{L^\kappa(\Omega;\phi)} := \left( \int_{\Omega} |f|^\kappa \phi \, \dd x \right)^{\frac{1}{\kappa}}.
\eal
The weighted Sobolev space $H^1(\Omega;\phi)$ is the space of functions $f \in L^2(\Omega;\phi)$ such that $\nabla f \in L^2(\Omega;\phi)$. This space is endowed with the norm
\bal \| f \|_{H^1(\Omega;\phi)}^2= \int_{\Omega} |f|^2 \phi \,\dd x +  \int_{\Omega} |\nabla f|^2 \phi \,\dd x.
\eal
The closure of $C_c^\infty(\Omega)$ in $H^1(\Omega;\phi)$ with respect to the norm $\| \cdot \|_{H^1(\Omega;\phi)}$ is denoted by $H_0^1(\Omega;\phi)$.



$\bullet$ For $\beta>0$, $ \Omega_{\beta}=\{ x \in \Omega: d(x) < \beta\}$, $\Sigma_{\beta}=\{ x \in \R^N \setminus \Sigma:  d_\Sigma(x)<\beta \}$.

$\bullet$ We denote by $c,c_1,C...$ the constant which depend on initial parameters and may change from one appearance to another.

$\bullet$ The notation $A \gtrsim B$ (resp. $A \lesssim B$) means $A \geq c\,B$ (resp. $A \leq c\,B$) where the implicit $c$ is a positive constant depending on some initial parameters. If $A \gtrsim B$ and $A \lesssim B$, we write $A \asymp B$. Throughout the paper, most of the implicit constants depend on some (or all) of the initial parameters such as $N,\Omega,\Sigma,k,\mu$ and we will omit these dependencies in the notations (except when it is necessary).

$\bullet$ For $a,b \in \BBR$, denote $a \wedge b = \min\{a,b\}$, $a \lor b =\max\{a,b \}$.

$\bullet$ For a set $D \subset \R^N$, $\1_D$ denotes the indicator function of $D$.

\subsection{Submanifold $\Sigma$.} \label{assumptionK} Throughout this paper, we assume that $\Sigma \subset \Omega$ is a $C^2$ compact submanifold in $\mathbb{R}^N$ without boundary, of dimension $k$, $0\leq k < N-2$. When $k = 0$ we assume that $\Sigma = \{0\}$.

For $x=(x_1,...,x_k,x_{k+1},...,x_N) \in \R^N$, we write $x=(x',x'')$ where $x'=(x_1,..,x_k) \in \R^k$ and $x''=(x_{k+1},...,x_N) \in \R^{N-k}$. For $\beta>0$, we denote by $B_{\beta}^k(x')$ the ball  in $\R^k$ with center at $x'$ and radius $\beta.$ For any $\xi\in \Sigma$, we denote
\bal  \Sigma_\beta &:=\{ x \in \R^N \setminus \Sigma: d_\Sigma(x) < \beta \}, \\
V(\xi,\xb)&:=\{x=(x',x''): |x'-\xi'|<\beta,\; |x_i-\Gamma_i^\xi(x')|<\xb,\;\forall i=k+1,...,N\},
\eal
for some functions $\Gamma_i^\xi: \R^k \to \R$, $i=k+1,...,N$.

The assumption that $\Sigma$ is a $C^2$ compact submanifold in $\mathbb{R}^N$ without boundary implies the existence of a constant $\xb_0>0$ such that the followings hold.
\begin{itemize}
\item[(i)] $\Sigma_{6\beta_0}\Subset \Omega$ and for any $x\in \Sigma_{6\beta_0}$, there is a unique $\xi \in \Sigma$  satisfies $|x-\xi|=d_\Sigma(x)$.

\item[(ii)] $d_\Sigma \in C^2(\Sigma_{4\beta_0})$, $|\nabla d_\Sigma|=1$ in $\Sigma_{4\beta_0}$ and there exists $\eta\in L^\infty(\Sigma_{4\beta_0})$ such that (see \cite[Lemma 2.2]{Vbook} and \cite[Lemma 6.2]{DN})
\bal
\Delta d_\Sigma(x)=\frac{N-k-1}{d_\Sigma(x)}+ \eta(x) \quad \text{for } x \in \Sigma_{4\beta_0} .
\eal

\item[(iii)] For any $\xi \in \Sigma$, there exist $C^2$ functions $\Gamma_i^\xi \in C^2(\R^k;\R)$, $i=k+1,...,N$, such that (upon relabeling and reorienting the coordinate axes if necessary), for any $\beta \in (0,6\beta_0)$, we have $V(\xi,\beta) \subset \Omega$ and
\bal
V(\xi,\beta) \cap \Sigma=\{x=(x',x''): |x'-\xi'|<\beta,\;  x_i=\Gamma_i^\xi (x'), \; \forall i=k+1,...,N\}.
\eal

\item[(iv)] There exist $m_0$ points $\xi^1, \xi^2, \ldots \xi^{m_0} \in \Sigma$ and $\beta_1 \in (0, \beta_0)$ such that
\be \label{cover}
\Sigma_{2\xb_1}\subset \cup_{j=1}^{m_0} V(\xi^j,\beta_0)\Subset \Omega.
\ee
\end{itemize}

Set
\bal \xd_\Sigma^\xi(x):=\left(\sum_{i=k+1}^N|x_i-\Gamma_i^\xi(x')|^2\right)^{\frac{1}{2}}, \qquad x=(x',x'')\in V(\xi,4\beta_0).\eal

Then there exists a constant $C=C(N,\Sigma)>0$ such that
\be\label{propdist}
d_\Sigma(x)\leq	\xd_\Sigma^{\xi^j}(x)\leq C \| \Sigma \|_{C^2} d_\Sigma(x),\quad \forall x\in V(\xi^j,2\beta_0),
\ee
where $\xi^j=((\xi^j)', (\xi^j)'') \in \Sigma$, $j=1,...,m_0$, are the points in \eqref{cover} and
\bal
\| \Sigma \|_{C^2}:=\sup\{  || \Gamma_i^{\xi^j} ||_{C^2(B_{5\beta_0}^k((\xi^j)'))}: \; i=k+1,...,N, \;j=1,...,m_0 \} < \infty.
\eal
We note that $\beta_1$ can be chosen small enough such that for any $x \in \Sigma_{\beta_1}$, we have $B(x,\beta_1) \subset V(\xi,\beta_0)$, 
where $\xi \in \Sigma$ satisfies $|x-\xi|=d_\Sigma(x)$.

\begin{lemma}[{\cite[Lemma A.1]{GkiNg_source}}] \label{lemapp:1}
Assume $\ell_1>0$, $\ell_2>0$, $\alpha_1$ and $\alpha_2$ such that $N-k+\alpha_1 + k\alpha_2 >0$. For $y \in \Omega \setminus \Sigma$, put
$
 \CA(y):= \{ x \in (\Omega \setminus \Sigma): d_{\Sigma}(x) \leq \ell_1 \quad \text{and} \quad |x-y| \leq \ell_2 d_{\Sigma}(x)^{\alpha_2} \}.
$
Then
\bal
\int_{\CA(y) \cap \Sigma_{\beta_1}} d_{\Sigma}(x)^{\alpha_1}\dx \lesssim \ell_1^{N-k+\alpha_1 + k\alpha_2}\ell_2^k.
\eal
\end{lemma}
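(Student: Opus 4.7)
The plan is to localize the integral to the coordinate charts $V(\xi^j,\beta_0)$ and then straighten out $\Sigma$ inside each chart so the problem reduces to a flat one on $\R^k\times\R^{N-k}$. Since $\CA(y)\cap\Sigma_{\beta_1}\subset\Sigma_{2\beta_1}\subset\bigcup_{j=1}^{m_0}V(\xi^j,\beta_0)$ by \eqref{cover}, it suffices to estimate $\int_{\CA(y)\cap V(\xi^j,\beta_0)}d_\Sigma^{\alpha_1}\,\dx$ for a single $j$ and then sum the $m_0$ contributions. Fix such a $j$ and write $\Gamma=(\Gamma_{k+1}^{\xi^j},\dots,\Gamma_N^{\xi^j})$.

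The next step is the volume-preserving change of variables
\[
\Psi:(x',x'')\mapsto (x',z''):=\bigl(x',\,x''-\Gamma(x')\bigr),
\]
whose Jacobian is identically $1$ because $\Psi$ is block-triangular with the identity on the $x'$-block. Under $\Psi$, the submanifold $\Sigma\cap V(\xi^j,\beta_0)$ is sent to $\{z''=0\}$ and $\xd_\Sigma^{\xi^j}(x)=|z''|$; by \eqref{propdist} we therefore have $d_\Sigma(x)\asymp|z''|$ throughout $V(\xi^j,2\beta_0)$. Consequently the constraint $d_\Sigma(x)\leq\ell_1$ becomes $|z''|\leq C\ell_1$, and from $|x'-y'|\leq|x-y|\leq\ell_2 d_\Sigma(x)^{\alpha_2}$ we obtain $|x'-y'|\leq C\ell_2|z''|^{\alpha_2}$, where $y'$ denotes the first $k$ coordinates of $y$ in the ambient frame and $C=C(N,\Sigma)>0$. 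Note that we do not need $y\in V(\xi^j,\beta_0)$; the bounds use only the obvious projection inequality $|x'-y'|\leq|x-y|$.

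Applying Fubini to the resulting integral in $(x',z'')$, the inner $x'$-integral over a ball of radius $\lesssim\ell_2|z''|^{\alpha_2}$ in $\R^k$ contributes a factor $\lesssim\ell_2^k|z''|^{k\alpha_2}$, and passing to polar coordinates on $\R^{N-k}$ reduces the remaining integral to
\[
\ell_2^k\int_0^{C\ell_1} r^{N-k-1+\alpha_1+k\alpha_2}\,\dd r,
\]
which converges precisely under the hypothesis $N-k+\alpha_1+k\alpha_2>0$ and is bounded by a constant multiple of $\ell_1^{N-k+\alpha_1+k\alpha_2}\ell_2^k$. Summing over $j=1,\dots,m_0$ yields the claim.

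I do not foresee any real obstacle: the argument is a routine tubular-neighbourhood flattening. The only mild technical points to check are the uniformity of the equivalence $d_\Sigma\asymp|z''|$ across the finite family of charts (granted by \eqref{propdist} together with the uniform $C^2$ bound $\|\Sigma\|_{C^2}<\infty$), and the observation that all constants in the final bound are independent of $y$, which is what makes the lemma useful for the subsequent weak-$L^p$ estimates of the gradients of $\BBG_\mu$ and $\BBK_\mu$.
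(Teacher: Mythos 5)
Your argument is correct and is the standard tubular-neighbourhood flattening proof one would expect for this lemma (which the paper itself does not reprove, deferring to Lemma~A.1 of the cited reference). The localization via the finite cover $\Sigma_{2\beta_1}\subset\bigcup_j V(\xi^j,\beta_0)$, the unit-Jacobian shear $(x',x'')\mapsto(x',x''-\Gamma(x'))$, the comparability $d_\Sigma\asymp|z''|$ from \eqref{propdist}, the projection inequality $|x'-y'|\le|x-y|$ (valid in each chart's frame even when $y$ lies outside that chart), and the Fubini-plus-polar computation $\ell_2^k\int_0^{C\ell_1}r^{N-k-1+\alpha_1+k\alpha_2}\,\dd r$ are all sound; the hypothesis $N-k+\alpha_1+k\alpha_2>0$ is exactly what makes the radial integral converge and produce $\ell_1^{N-k+\alpha_1+k\alpha_2}$. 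Two small points worth being explicit about, both harmless: bounding the inner $x'$-integral by the full ball volume $c_k(C\ell_2|z''|^{\alpha_2})^k$ (rather than its intersection with the chart) only overestimates, which is fine for an upper bound; and if $\ell_1>\beta_1$ the effective radial cutoff is $\min(\ell_1,\beta_1)$, but since the exponent is positive the stated bound with $\ell_1$ is weaker and hence still valid.
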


\subsection{Estimates on the Green function and Martin kernel} \label{sec:GreenMartin} In this paper, we always assume that \eqref{assump1} holds. Let $G_\mu$ and $K_{\mu}$ be the Green kernel and Martin kernel of $-L_\mu$ in $\Omega \setminus \Sigma$ respectively.  The following results were proved in \cite[Proposition 4.1 and Theorem 1.2]{GkiNg_linear}.

\begin{proposition}[Sharp two-sided estimates on Green function]  \label{Greenkernel} ~~
	
	(i) If $\mu< \left( \frac{N-2}{2}\right)^2$ then, for any $x,y \in \Omega \setminus \Sigma$, $x \neq y$,
	\bel{Greenesta} \BAL
	G_{\mu}(x,y)&\asymp |x-y|^{2-N} \left(1 \wedge \frac{d(x)d(y)}{|x-y|^2}\right)  \left(\frac{|x-y|}{d_\Sigma(x)}+1\right)^\am
	\left(\frac{|x-y|}{d_\Sigma(y)}+1\right)^\am \\
	&\asymp |x-y|^{2-N} \left(1 \wedge \frac{d(x)d(y)}{|x-y|^2}\right) \left(1 \wedge \frac{d_\Sigma(x)d_\Sigma(y)}{|x-y|^2} \right)^{-\am}.
	\EAL \ee
	
	(ii) If $\Sigma=\{0\}$ and $\mu = \left( \frac{N-2}{2}\right)^2$ then, for any $x,y \in \Omega \setminus \{0\}$, $x \neq y$,
	\ba\label{Greenestb} \BAL
	&G_{\mu}(x,y) \asymp |x-y|^{2-N} \left(1 \wedge \frac{d(x)d(y)}{|x-y|^2}\right) \left(\frac{|x-y|}{|x|}+1\right)^{\frac{N-2}{2}}
	\left(\frac{|x-y|}{|y|}+1\right)^{\frac{N-2}{2}}\\
	& \qquad \qquad +(|x||y|)^{-\frac{N-2}{2}}\left|\ln\left(1 \wedge \frac{|x-y|^2}{d(x)d(y)}\right)\right| \\
	&\asymp |x-y|^{2-N} \left(1 \wedge \frac{d(x)d(y)}{|x-y|^2}\right) \left(1 \wedge \frac{|x||y|}{|x-y|^2} \right)^{-\frac{N-2}{2}} +(|x||y|)^{-\frac{N-2}{2}}\left|\ln\left(1 \wedge \frac{|x-y|^2}{d(x)d(y)}\right)\right|.
	\EAL \ea
	
	The implicit constants in \eqref{Greenesta} and \eqref{Greenestb} depend on $N,\Omega,\Sigma,\mu$.
\end{proposition}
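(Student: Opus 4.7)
The plan is to derive the two-sided Green function bounds by combining a weighted conjugation trick (which kills the Hardy potential to leading order near $\Sigma$) with barrier/comparison arguments and boundary Harnack theory. The key algebraic fact driving everything is that $\ga(\ga-2H)+\mu=0$ for $\ga\in\{\am,\ap\}$, so that
\[
L_\mu(d_\Sigma^{-\am})(x) = -\am\,d_\Sigma(x)^{-\am-1}\bigl(\Delta d_\Sigma(x)-\tfrac{N-k-1}{d_\Sigma(x)}\bigr) = O(d_\Sigma(x)^{-\am-1}),
\]
i.e.\ $d_\Sigma^{-\am}$ is an $L_\mu$-harmonic function near $\Sigma$ up to a regular (bounded) perturbation, by the asymptotic $\Delta d_\Sigma = (N-k-1)d_\Sigma^{-1}+O(1)$ from Section~\ref{assumptionK}. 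Consequently the \emph{weakest} admissible singularity of any positive $L_\mu$-solution along $\Sigma$ is $d_\Sigma^{-\am}$, and this dictates the extra factor $(1\wedge d_\Sigma(x)d_\Sigma(y)/|x-y|^2)^{-\am}$ in \eqref{Greenesta}.

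First, I would construct $G_\mu$ as the minimal positive fundamental solution using the positivity of $-L_\mu$ (granted by $\gl_\mu>0$) via an exhaustion $\{O_n\}$ of $\Omega\setminus\Sigma$; each $G_\mu^{O_n}$ exists by standard Fredholm/regularity theory on a smooth domain, and they form an increasing sequence whose limit is finite by the Hardy inequality. Next, I would conjugate: for fixed $y$, write $G_\mu(x,y) = d_\Sigma(x)^{-\am}\,H(x,y)$ so that $H$ solves
\[
-\Delta H + 2\am\,d_\Sigma^{-1}\nabla d_\Sigma\cdot\nabla H + V(x)H = d_\Sigma(y)^{\am}\,\gd_y,
\]
where $V$ is bounded on $\Sigma_{4\beta_0}\cap (\Omega\setminus\Sigma)$ thanks to the identity above. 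The conjugated operator is uniformly elliptic with first-order coefficients that are $L^p_\loc$ for $p>N$ (since $k<N-2$ gives $d_\Sigma^{-1}\in L^p_\loc$ for $p<N-k$), so the standard Littman–Stampacchia–Weinberger theory applies to $H$ and gives the classical two-sided bound
\[
H(x,y) \asymp d_\Sigma(y)^{\am}|x-y|^{2-N}\left(1\wedge \tfrac{d(x)d(y)}{|x-y|^2}\right)
\]
in the regime where $|x-y|\gtrsim d_\Sigma(x)\vee d_\Sigma(y)$, after symmetrising in $y$. Combining the two conjugations in $x$ and in $y$ produces the right-hand side of \eqref{Greenesta} in the regime $|x-y|\ge d_\Sigma(x)\vee d_\Sigma(y)$.

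For the complementary regime where $|x-y|\le \tfrac12 (d_\Sigma(x)\wedge d_\Sigma(y))$, the Hardy potential $\mu d_\Sigma^{-2}$ is bounded on the ball $B(x,|x-y|/2)$, so $G_\mu$ is locally comparable to the Laplace Green function; this gives the bound $|x-y|^{2-N}(1\wedge d(x)d(y)/|x-y|^2)$ with the $d_\Sigma$-factor equal to $1$, consistent with the global formula since $1\wedge d_\Sigma(x)d_\Sigma(y)/|x-y|^2=1$ there. The remaining intermediate regime is handled by a chaining argument along a Harnack chain joining $x$ and $y$, combined with the scale-invariant Harnack inequality for $L_\mu$ on balls at distance $\asymp d_\Sigma$ from $\Sigma$ (which holds because $\mu d_\Sigma^{-2}\asymp d_\Sigma^{-2}$ is scale-critical but uniformly bounded on such Whitney-type balls). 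For the upper bound I would construct the global supersolution obtained by multiplying together the right-hand side of \eqref{Greenesta} with appropriate cut-offs; for the lower bound I would build explicit local subsolutions of the form $d_\Sigma(x)^{-\am}d_\Sigma(y)^{-\am}\cdot(\text{Newtonian kernel of the conjugated operator})$ and invoke the Boundary Harnack Principle for $L_\mu$ at both $\Sigma$ and $\partial\Omega$.

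Finally, for the critical case \eqref{Greenestb} where $\xS=\{0\}$ and $\mu=(\tfrac{N-2}{2})^2$, the roots collapse $\am=\ap=\tfrac{N-2}{2}$, so the two-dimensional space of radial $L_\mu$-harmonic functions in a punctured ball is spanned by $r^{-(N-2)/2}$ and $r^{-(N-2)/2}\log r$ rather than by $r^{-\am}$ and $r^{-\ap}$. This degeneracy is the source of the logarithmic correction $(|x||y|)^{-(N-2)/2}|\ln(1\wedge |x-y|^2/(d(x)d(y)))|$; I would handle it either by writing $\mu_\ge = (\tfrac{N-2}{2})^2-\ge$, using the subcritical estimate \eqref{Greenesta} together with the identity $\am(\ge)=\tfrac{N-2}{2}-\sqrt{\ge}$, and passing to the limit $\ge\downarrow 0$ via monotone convergence of Green functions (noting $(a^{\sqrt{\ge}}-a^{-\sqrt\ge})/(2\sqrt{\ge})\to \log a$), or alternatively by a direct separation-of-variables/barrier argument with the explicit solutions $r^{-(N-2)/2}$ and $r^{-(N-2)/2}\log r$. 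The main obstacle throughout is the careful matching across the three regimes (near $\Sigma$, near $\partial\Omega$, and the bulk), especially where the two boundary sets interact, as the exponents and singular behaviours change discontinuously from one regime to another; the chaining via Harnack-type inequalities must be executed with uniform constants that do not degenerate as $\mu\to\CC_{\Omega,\Sigma}$ or as $x\to\Sigma$, which is where the hypothesis $\gl_\mu>0$ and the sharp eigenfunction two-sided bound $\phi_\mu\asymp d\,d_\Sigma^{-\am}$ play an essential role.
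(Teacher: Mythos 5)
The paper does not prove Proposition~\ref{Greenkernel}; it cites \cite[Proposition~4.1 and Theorem~1.2]{GkiNg_linear} for these estimates. So the comparison here is between your blind attempt and what one would actually need for a self-contained argument, and several steps as written would not go through.

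The central technical gap is in the conjugation step. Writing $G_\mu(x,y)=d_\Sigma(x)^{-\am}H(x,y)$ and using $\Delta d_\Sigma=\tfrac{N-k-1}{d_\Sigma}+\eta$ with $\eta\in L^\infty$, a direct computation gives
\[
-\Delta H+2\am\,d_\Sigma^{-1}\nabla d_\Sigma\cdot\nabla H+\am\,\eta\,d_\Sigma^{-1}H=d_\Sigma(y)^{\am}\,\delta_y,
\]
so the zeroth-order coefficient is $V=\am\,\eta\,d_\Sigma^{-1}$, which is of order $d_\Sigma^{-1}$ and thus \emph{unbounded}, not bounded as you assert. Likewise the drift coefficient $2\am\,d_\Sigma^{-1}\nabla d_\Sigma$ is of order $d_\Sigma^{-1}$, and since $d_\Sigma^{-1}\in L^p_{\loc}$ only for $p<N-k\leq N$, it is never in $L^p_\loc$ with $p>N$; so the $L^p$-coefficient hypothesis you invoke for the Littman--Stampacchia--Weinberger theory is exactly violated, not satisfied. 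The conjugated operator is therefore neither a uniformly elliptic operator with bounded lower-order terms nor one in the $L^p$ class that LSW treats. The natural object is the degenerate divergence form $-\mathrm{div}(d_\Sigma^{-2\am}\nabla\cdot)$ (whose weight is $A_2$ precisely because $2\am<N-k$), and the whole content of the bound near $\Sigma$ is a quantitative boundary Harnack and Carleson estimate for this degenerate operator adapted to the thin set $\Sigma$ of codimension $N-k>2$; calling on the ``Boundary Harnack Principle for $L_\mu$ at $\Sigma$'' in the lower-bound step is essentially assuming the conclusion.

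A second gap: for the critical case $\mu=(\tfrac{N-2}{2})^2$ you propose sending $\xe\downarrow0$ in \eqref{Greenesta} with $\am(\xe)=\tfrac{N-2}{2}-\sqrt{\xe}$ and invoking $(a^{\sqrt{\xe}}-a^{-\sqrt{\xe}})/(2\sqrt{\xe})\to\log a$. That heuristic correctly predicts the logarithmic term, but for it to give a proof you would need the comparability constants in \eqref{Greenesta} to be \emph{uniform} in $\xe$, which is not established and is nontrivial (the constant $\lambda_\mu>0$ must remain bounded away from zero uniformly, the exhaustion Green functions must converge with uniform control, etc.). Without that uniformity, the limit of the two-sided bound is not a two-sided bound. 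The idea is right in spirit, but both the LSW invocation in the subcritical case and the $\xe\to0$ limit in the critical case need to be replaced with genuine quantitative arguments.
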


\begin{proposition}[Sharp two-sided estimates on Martin kernel] \label{Martin} ~~
	
	(i) If $\mu< \left( \frac{N-2}{2}\right)^2$ then
	\be \label{Martinest1}
	K_{\mu}(x,\xi) \asymp\left\{
	\BAL
	&\frac{d(x)d_\Sigma(x)^{-\am}}{|x-\xi|^N}\quad &&\text{if } x \in \Omega \setminus \Sigma,\;  \xi \in \partial\xO, \\
	&\frac{d(x)d_\Sigma(x)^{-\am}}{|x-\xi|^{N-2-2\am}} &&\text{if } x \in \Omega \setminus \Sigma,\; \xi \in \Sigma.
	\EAL \right.
	\ee
	
	(ii) If  $\Sigma=\{0\}$ and $\mu= \left( \frac{N-2}{2}\right)^2$ then
	\ba \label{Martinest1-b}
	K_{\mu}(x,\xi) \asymp\left\{
	\BAL
	&\frac{d(x)|x|^{-\frac{N-2}{2}}}{|x-\xi|^N}\quad &&\text{if } x \in \Omega \setminus \{0\},\; \xi \in \partial\xO, \\
	&d(x)|x|^{-\frac{N-2}{2}}\left|\ln\frac{|x|}{\CD_\Omega}\right| &&\text{if } x \in \Omega \setminus \{0\},\; \xi=0,
	\EAL \right.
	\ea
	where $\CD_\Omega:=2\sup_{x \in \Omega}|x|$.
	
	The implicit constants depend on $N,\Omega,\Sigma,\mu$.
\end{proposition}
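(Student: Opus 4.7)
The plan is to construct the Martin kernel as a boundary limit of normalized Green kernels and then to read off the asserted bounds by substituting the Green function estimates from Proposition \ref{Greenkernel}. Fix a reference point $x_0\in\Omega\setminus\Sigma$ with $d(x_0)\asymp d_\Sigma(x_0)\asymp 1$ and set
\[
K_\mu(x,\xi) := \lim_{\Omega\setminus\Sigma\ni y\to\xi}\frac{G_\mu(x,y)}{G_\mu(x_0,y)},\qquad \xi\in\partial\Omega\cup\Sigma.
\]
The preliminary task is to verify that this limit exists and is approach-independent. This is standard potential theory once a scale-invariant boundary Harnack principle for $L_\mu$ is in hand; at points of $\partial\Omega$ the classical boundary Harnack principle suffices (since $d_\Sigma\asymp 1$ on a neighborhood of $\partial\Omega$), while at points of $\Sigma$ one needs a version adapted to the Hardy perturbation that controls the ratio of two positive $L_\mu$-solutions vanishing like $d_\Sigma^{\ap}$ on $\Sigma$.

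Once existence is in hand, the bounds are read off regimewise. For $\xi\in\partial\Omega$, as $y\to\xi$ one has $d(y)\to 0$, $d_\Sigma(y)\asymp 1$, and $|x-y|\to|x-\xi|$, so \eqref{Greenesta} yields
\[
G_\mu(x,y)\asymp\frac{d(x)\,d(y)}{|x-\xi|^N}\left(1\wedge\frac{d_\Sigma(x)}{|x-\xi|^2}\right)^{-\am},\qquad G_\mu(x_0,y)\asymp d(y).
\]
The factor $d(y)$ cancels in the ratio, and since $\Sigma\Subset\Omega$ one has $|x-\xi|+d_\Sigma(x)\asymp 1$ for every $\xi\in\partial\Omega$, so $1\wedge d_\Sigma(x)/|x-\xi|^2\asymp d_\Sigma(x)$ and the first line of \eqref{Martinest1} follows. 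For $\xi\in\Sigma$ the roles of $d$ and $d_\Sigma$ swap: now $d_\Sigma(y)\to 0$, $d(y)\asymp 1$, giving
\[
G_\mu(x,y)\asymp|x-\xi|^{2-N}\left(1\wedge\frac{d(x)}{|x-\xi|^2}\right)\left(\frac{d_\Sigma(x)\,d_\Sigma(y)}{|x-\xi|^2}\right)^{-\am},\qquad G_\mu(x_0,y)\asymp d_\Sigma(y)^{-\am}.
\]
The factor $d_\Sigma(y)^{-\am}$ cancels, and the symmetric geometric constraint $|x-\xi|+d(x)\asymp 1$ for $\xi\in\Sigma$ (again by $\Sigma\Subset\Omega$) reduces $1\wedge d(x)/|x-\xi|^2$ to $d(x)$, producing the second line of \eqref{Martinest1}.

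Case (ii) is handled by the same scheme with \eqref{Greenestb} in place of \eqref{Greenesta}; the new feature is that the indicial equation $\alpha^2-(N-2)\alpha+(N-2)^2/4=0$ has the double root $\alpha=(N-2)/2$, so $r^{-(N-2)/2}$ and $r^{-(N-2)/2}\log r$ form a fundamental system, and this degeneracy is responsible for the logarithmic correction in both \eqref{Greenestb} and \eqref{Martinest1-b}. At $\xi\in\partial\Omega$ the argument of case (i) transfers verbatim with $\am$ replaced by $(N-2)/2$. At $\xi=0$, as $y\to 0$ the logarithmic term in \eqref{Greenestb} dominates the algebraic one (because $d(y)\asymp 1$ makes the argument of the logarithm comparable to $|x|^2/d(x)$), while $G_\mu(x_0,y)\asymp|y|^{-(N-2)/2}$ cancels the singular $|y|$-behavior; after the geometric simplification $|x|+d(x)\asymp 1$ one recovers exactly $d(x)|x|^{-(N-2)/2}|\log(|x|/\mathcal{D}_\Omega)|$.

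The main obstacle is not the asymptotic computation itself but the preliminary step of proving that each $\xi\in\Sigma$ is a \emph{minimal} Martin boundary point and that the normalized Green function converges there independently of the approach. This hinges on a scale-invariant boundary Harnack inequality for positive $L_\mu$-solutions vanishing like $d_\Sigma^{\ap}$ on $\Sigma$, considerably more delicate than its uniformly elliptic counterpart because $L_\mu$ is genuinely degenerate near $\Sigma$. In the critical regime $\mu=H^2$ the two exponents $\am$ and $\ap$ coincide, so the proof must propagate the logarithmic corrections throughout the argument and identify which of the two candidate boundary behaviors is the minimal one.
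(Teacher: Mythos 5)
The paper does not actually prove Proposition \ref{Martin}; it imports it verbatim from \cite[Proposition 4.1 and Theorem 1.2]{GkiNg_linear} together with the Green-function estimates of Proposition \ref{Greenkernel}. So the honest comparison is against that reference rather than against a proof in the present text.

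Your plan is the standard construction of the Martin kernel as a normalized limit of Green functions, followed by substituting in the sharp bounds \eqref{Greenesta}--\eqref{Greenestb}. The asymptotic bookkeeping you carry out is correct: for $\xi\in\partial\Omega$ the factor $d(y)$ cancels and $1\wedge d_\Sigma(x)/|x-\xi|^2\asymp d_\Sigma(x)$ because $\dist(\Sigma,\partial\Omega)\gtrsim 1$; for $\xi\in\Sigma$ the factor $d_\Sigma(y)^{-\am}$ cancels and the exponent $|x-\xi|^{2-N+2\am}=|x-\xi|^{-(N-2-2\am)}$ falls out; and in the critical case $\xi=0$ the two summands of \eqref{Greenestb} combine, after cancellation of $|y|^{-(N-2)/2}$, to $|x|^{-(N-2)/2}\bigl(1\wedge d(x)/|x|^2\bigr)+|x|^{-(N-2)/2}\bigl|\ln(1\wedge|x|^2/d(x))\bigr|\asymp d(x)|x|^{-(N-2)/2}|\ln(|x|/\CD_\Omega)|$, which you should verify by checking the three regimes $x$ near $0$, $x$ near $\partial\Omega$, and $x$ in the interior bulk rather than by the claim that one term always dominates the other (near $\partial\Omega$ the logarithmic term actually vanishes and the algebraic one carries the bound). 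One small inaccuracy: solutions with zero boundary trace on $\Sigma$ behave like $d_\Sigma^{-\am}$ (the less singular indicial exponent), not ``vanish like $d_\Sigma^{\ap}$''; the weight $d_\Sigma^{-\ap}$ enters only in the normalization defining zero trace.

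The genuine gap is the one you yourself flag: to pass from the two-sided bound $\asymp$ on $G_\mu(x,y)/G_\mu(x_0,y)$ to the \emph{existence} of the limit and to the identification of the minimal Martin boundary, you need a scale-invariant boundary Harnack principle for $L_\mu$-solutions up to $\Sigma$, plus the local comparison near $\Sigma$ with the model solutions $d_\Sigma^{-\am}$ and $d_\Sigma^{-\ap}$ (and, in the critical case, with $d_\Sigma^{-H}$ and $d_\Sigma^{-H}|\ln d_\Sigma|$). That is precisely the technical content of \cite{GkiNg_linear}; it is not ``standard potential theory'' near $\Sigma$, because $L_\mu$ degenerates there and the BHP must be proved through the degeneracy, typically by rescaling to a model wedge/cone problem and iterating. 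Without that ingredient your computation only establishes that the normalized Green function ratios are pinched between the claimed bounds along \emph{any} sequence $y\to\xi$, not that they converge, so the proposal as written is an honest and essentially correct outline of the argument used in the cited reference, but it is not a self-contained proof.
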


\section{Weak Lebesgue estimates of the gradient of Green and Martin operators} \label{sec:weakLp}
We recall that throughout the paper we always assume \eqref{assump1}.
\subsection{Auxiliary estimates} We start this section by recalling the definition of weak Lebesgue spaces (or Marcinkiewicz spaces). Let $D \subset \R^N$ be a domain, $1 < \kappa < \infty$ and $\tau \in \GTM^+(D)$. 
The weak $L^\kappa$ space defined as follows
\bal
L^\kappa_w(D;\tau):=\{ f: D \to \R \text{ Borel measurable}:
\norm{f}_{L^\kappa_w(D;\tau)}<+\infty\}
\eal
where
\bal \norm{f}_{L^\kappa_w(D;\tau)}:=\sup\left\{
\frac{\int_{A}|f|\dd\tau}{\tau(A)^{1-\frac{1}{\kappa}}}: A \sbs D, \, A \text{
	measurable},\, 0<\tau(A)<\infty \right\}. \eal

\noindent When $\dd\tau=\varphi \, \dx$ for some positive continuous function $\varphi$, for simplicity, we use the notation $L_w^\kappa(D;\varphi)$.  Note that the continuous embedding
$ L_w^\kappa(D;\varphi) \sbs L^{r}(D;\varphi)$ holds for any $r \in [1,\kappa)$. We also remark that for any $f \in L_w^\kappa(D;\varphi)$,
\bal \int_{\{x \in D: |f(x)| \geq s\} }\varphi \dd x \leq s^{-\kappa}\norm{f}^\kappa_{L_w^\kappa(D;\varphi)}.
\eal

\noindent Estimates on $\BBG_{\mu}$ and $\BBK_{\mu}$ in $L_w^\kappa$ were obtained in \cite[Theorems 3.8, 3.9, 3.10]{GkiNg_source}.

\begin{theorem}[Weak Lebesgue estimates of $\BBG_{\mu}$] \label{lpweakgreen}
	There holds
	\bel{estgreen}
	\norm{\BBG_\mu[\gt]}_{L_w^{p_*}(\Gw\setminus \Sigma;\ei)} \lesssim \norm{\gt}_{\mathfrak{M}(\xO\setminus \Sigma;\ei)}, \quad \forall \tau\in \mathfrak{M}^+(\xO\setminus \Sigma;\ei).
	\ee
	The implicit constant depends on $N,\Omega,\Sigma,\mu$.
\end{theorem}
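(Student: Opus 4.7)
The plan is to reduce, by positive homogeneity of $\BBG_\mu$, to the level-set estimate
\begin{equation*}
\int_{\{x\in\Omega\setminus\Sigma:\,\BBG_\mu[\tau](x)>s\}}\phi_\mu(x)\,dx \;\lesssim\; s^{-p_*}\qquad\forall s>0,
\end{equation*}
valid for every $\tau\in \GTM^+(\Omega\setminus\Sigma;\phi_\mu)$ with $\|\tau\|_{\GTM(\Omega\setminus\Sigma;\phi_\mu)}=1$. The starting point is Proposition \ref{Greenkernel}: combining the sharp two-sided bound for $G_\mu$ in the subcritical case $\mu<(\tfrac{N-2}{2})^2$ with the equivalence $\phi_\mu\asymp d\,d_\Sigma^{-\am}$ exhibits two asymptotic regimes of the kernel: away from the diagonal one has $G_\mu(x,y)\asymp \phi_\mu(x)\phi_\mu(y)|x-y|^{2\am-N}$, while near the diagonal the kernel reduces to a Riesz-type kernel modulated by the boundary distances. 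The two numbers inside the minimum defining $p_*$ in \eqref{p*q*} correspond exactly to these two scalings: $(N-\am)/(N-\am-2)$ is dictated by the $\Sigma$-singularity, while $(N+1)/(N-1)$ is dictated by the $\partial\Omega$-singularity, so heuristically $p_*$ is the largest exponent consistent with both.

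The core step is the base weak estimate for the Green kernel itself, namely
\begin{equation*}
\int_{\{x\in\Omega\setminus\Sigma:\,G_\mu(x,y)>t\,\phi_\mu(y)\}}\phi_\mu(x)\,dx \;\lesssim\; t^{-p_*}\qquad\forall t>0,\ \forall y\in\Omega\setminus\Sigma,
\end{equation*}
with an implicit constant independent of $y$. This I would establish directly from Proposition \ref{Greenkernel} by partitioning $\Omega\setminus\Sigma$ according to which of the four possible geometric regimes dictated by the two minima in \eqref{Greenesta} is in force, and then bounding the $\phi_\mu$-volume of each slice of the super-level set by dimensional counting together with Lemma \ref{lemapp:1}, which is tailor-made for integrating powers of $d_\Sigma$ over tubes around $\Sigma$. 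The uniform-in-$y$ kernel estimate is then transferred to arbitrary $\tau$ by the standard Fubini-plus-Chebyshev decomposition: split $\tau$ into its restrictions to $\{\phi_\mu\BBG_\mu[\tau]\leq s\}$ and its complement and estimate each contribution by the kernel bound, a procedure that passes a kernel-level weak estimate to an operator-level weak estimate of Marcinkiewicz type.

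The main obstacle is the four-regime case analysis triggered by the two minima in \eqref{Greenesta}: in each regime the effective polynomial order of $G_\mu(x,y)/\phi_\mu(y)$ changes, and one needs to identify precisely which regime saturates the weak $p_*$ bound while checking that the remaining regimes deliver strictly better decay. The critical case $\Sigma=\{0\}$, $\mu=(\tfrac{N-2}{2})^2$ proceeds in parallel using \eqref{Greenestb}; the additional logarithmic term there is absorbed into a slightly weaker polynomial factor (using $|\ln r|\lesssim r^{-\varepsilon}$ for any small $\varepsilon>0$) before running the same partitioning argument, which does not alter the final exponent $p_*$ in the subcritical regime of the bound.
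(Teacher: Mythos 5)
The paper does not prove Theorem \ref{lpweakgreen}; it is quoted verbatim from \cite[Theorems 3.8, 3.9, 3.10]{GkiNg_source}, so there is no in-paper proof to compare against. That said, your sketch follows the same strategy the present paper uses for the analogous gradient estimates (Lemmas \ref{anisotitaweakF1a}--\ref{anisotitaweakF4} and Theorems \ref{lpweakgrangreen}--\ref{lpweakgrangreen2}): establish a uniform-in-$y$ bound $\int_{\{G_\mu(\cdot,y)>t\phi_\mu(y)\}}\phi_\mu\,\dx\lesssim t^{-p_*}$ by partitioning super-level sets according to the two minima in \eqref{Greenesta}, using Lemma \ref{lemapp:1} to control the $\phi_\mu$-volume of tubular slices near $\Sigma$, and then lift the kernel-level bound to the operator $\BBG_\mu$. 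The scaling heuristics you give (the $\Sigma$-exponent $\tfrac{N-\am}{N-\am-2}$ versus the $\partial\Omega$-exponent $\tfrac{N+1}{N-1}$, and the fact that for $\mu>0$ the latter binds) are exactly right, and the absorption of the logarithm via $|\ln r|\lesssim r^{-\varepsilon}$ in the critical case is how the paper handles the $\tilde F$ term in Lemma \ref{anisotitaweakF4}.

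The one imprecision is in the transfer step. You describe splitting $\tau$ into its restrictions to $\{\phi_\mu\BBG_\mu[\tau]\le s\}$ and its complement; this is not how the lift actually works and I do not see how it closes. The correct mechanism is Proposition \ref{bvivier} (the Bidaut-V\'eron--Vivier lemma, which the paper invokes repeatedly in Section \ref{sec:weakLp}): one fixes a measurable set $A\subset\Omega\setminus\Sigma$ of finite $\phi_\mu$-measure, writes $\int_A \BBG_\mu[\tau]\phi_\mu\,\dx = \int\left(\int_A G_\mu(x,y)\phi_\mu(x)\,\dx\right)\dd\tau(y)$ by Fubini, and for each fixed $y$ splits the \emph{level of the kernel} $G_\mu(\cdot,y)/\phi_\mu(y)$ at a threshold $t$, estimating the high part by the assumed weak bound $m_\lambda(y)\lesssim\lambda^{-p_*}$ and the low part by $t\int_A\phi_\mu\,\dx$, and then optimizes in $t$. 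Replacing your decomposition of $\tau$ with a direct application of Proposition \ref{bvivier} (with $\CH(x,y)=G_\mu(x,y)\phi_\mu(y)^{-1}$, $\eta=\phi_\mu$, $\omega=\phi_\mu\tau$) makes the argument rigorous and completes the proof.
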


%
%

\begin{theorem}[Weak Lebesgue estimates of $\BBK_{\mu}$]\label{lpweakmartin1} ~~
	
\noindent	{\sc I.} Assume $\mu \leq H^2$ and $\gn\in \mathfrak{M}(\partial\xO\cup \Sigma)$ with compact support in $\partial\xO.$ Then
\bal
	\norm{\mathbb{K}_\mu[\nu]}_{L_w^{\frac{N+1}{N-1}}(\Gw\setminus \Sigma;\ei)} \lesssim \|\nu\|_{\mathfrak{M}(\partial\Omega)}.
\eal
The implicit constant depends on $N,\Omega,\Sigma,\mu$.	

\noindent	{\sc II.} Assume $\gn\in \mathfrak{M}(\partial\xO\cup \Sigma)$ with compact support in $\Sigma$.
	
\noindent	(i) If $\mu < \left( \frac{N-2}{2} \right)^2$ then
\bal
	\norm{\mathbb{K}_{\mu}[\nu]}_{L_w^{\frac{N-\am}{N-\am-2}}(\Gw\setminus \Sigma;\ei)} \lesssim \norm{\nu}_{\mathfrak{M}(\Sigma)}.
\eal
The implicit constant depends on $N,\Omega,\Sigma,\mu$.
	
\noindent 	(ii) If  $\Sigma=\{0\}$ and $\mu = \left( \frac{N-2}{2} \right)^2$ then, for any $1<\theta<\frac{N+2}{N-2}$,
\bal
	\norm{\mathbb{K}_{\mu}[\delta_0]}_{L_w^{\theta}(\Gw\setminus \{0\};\ei )} \lesssim 1.
\eal
The implicit constant depends on $N,\Omega,\theta$. Here $\delta_0$ is the Dirac mass concentrated at $0$.
\end{theorem}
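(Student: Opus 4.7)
The plan is to reduce the problem to bounding the distribution function, with respect to the weighted measure $\phi_\mu\,dx$, of the kernel $K_\mu(\cdot,\xi)$ for a single source point $\xi$, and then recover the estimate for a general measure $\nu$ by a layer-cake/superposition argument. The key analytical inputs are the sharp two-sided kernel bounds of Proposition \ref{Martin} and the equivalence $\phi_\mu(x) \asymp d(x)d_\Sigma(x)^{-\am}$. The reduction to point masses takes the form
\bal
\|\mathbb{K}_\mu[\nu]\|_{L^q_w(\Omega\setminus\Sigma;\phi_\mu)} \lesssim \|\nu\|_{\mathfrak{M}} \cdot \sup_{\xi \in \supp\nu}\|\mathbb{K}_\mu[\delta_\xi]\|_{L^q_w(\Omega\setminus\Sigma;\phi_\mu)},
\eal
which follows from the subadditivity (up to a constant) of $\|\cdot\|_{L^q_w}^q$ combined with Fubini, as in the argument of \cite[Theorem 3.9]{GkiNg_source}. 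The task is then reduced to bounding $|\{K_\mu(\cdot,\xi)>t\}|_{\phi_\mu}$ for each type of singular point $\xi$.

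For Part I, fix $\xi \in \partial\Omega$. The compactness of $\supp\nu \subset \partial\Omega$ ensures $d_\Sigma(x) \gtrsim 1$ on the relevant region, so \eqref{Martinest1} reduces the model problem to $K_\mu(x,\xi) \asymp d(x)/|x-\xi|^N$ with weight $\phi_\mu(x)\asymp d(x)$. In straightened boundary coordinates with $d(x)\asymp x_N$ and $|x-\xi|\asymp |x|$, the scaling $x\mapsto\lambda x$ sends $K_\mu$ to $\lambda^{-(N-1)}K_\mu$ and $x_N\,dx$ to $\lambda^{N+1}x_N\,dx$, forcing the Marcinkiewicz exponent $(N+1)/(N-1)$. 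A direct evaluation on the region $\{x_N > t|x-\xi|^N\}$, which is contained in a ball of radius $\lesssim t^{-1/(N-1)}$, confirms $\int_{\{K_\mu(\cdot,\xi)>t\}}\phi_\mu\,dx \lesssim t^{-(N+1)/(N-1)}$.

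For Part II(i), fix $\xi \in \Sigma$. In the local coordinates of Subsection \ref{assumptionK}, using \eqref{propdist}, we have $d_\Sigma(x)\asymp|x''|$ and $d(x)\asymp 1$ near $\xi$; then \eqref{Martinest1} gives $K_\mu(x,\xi)\asymp |x''|^{-\am}|x-\xi|^{-(N-2-2\am)}$ and $\phi_\mu(x)\asymp |x''|^{-\am}$. The isotropic scaling $x \mapsto \lambda x$ sends $K_\mu$ to $\lambda^{-(N-\am-2)}K_\mu$ while the weighted measure $|x''|^{-\am}\,dx$ scales by $\lambda^{N-\am}$, yielding the exponent $(N-\am)/(N-\am-2)$; passing to $(\rho,s)=(|x''|,|x'|)$ and invoking Lemma \ref{lemapp:1} to control the tubular neighborhood of $\Sigma$ makes this explicit. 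For Part II(ii), with $\Sigma=\{0\}$ and $\am=(N-2)/2$, the logarithmic factor in \eqref{Martinest1-b} forces the super-level set $\{K_\mu(\cdot,0)>t\}$ into a ball $\{|x|<r_t\}$ with $r_t\asymp (|\ln t|/t)^{2/(N-2)}$, whence $\int_{\{K_\mu(\cdot,0)>t\}}\phi_\mu\,dx \lesssim r_t^{(N+2)/2}\lesssim |\ln t|^{(N+2)/(N-2)}\,t^{-(N+2)/(N-2)}$. The logarithmic loss is absorbed into any strictly smaller power of $t$, yielding the claim for each $\theta<(N+2)/(N-2)$ and excluding the endpoint.

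The main obstacle is the simultaneous tracking of three interacting singular structures: the blow-up of $K_\mu$ at $\xi$, the Hardy weight $d_\Sigma^{-\am}$, and the $k$-dimensional geometry of $\Sigma$. In Part II(i) this demands a reduction to the model problem which is uniform in $\xi \in \Sigma$, together with a clean splitting of the $\phi_\mu$-integral into a near-$\Sigma$ piece (tamed by Lemma \ref{lemapp:1}) and an interior piece. In Part II(ii) the logarithmic correction strictly rules out the endpoint exponent, and the estimate must be tuned to each sub-critical $\theta$ through an elementary trade between the power and logarithmic factors.
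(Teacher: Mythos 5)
The paper does not prove Theorem \ref{lpweakmartin1} here; it is cited from \cite[Theorems 3.8--3.10]{GkiNg_source}. Your strategy — superposition to reduce to point masses, then super-level-set estimates driven by the sharp kernel bounds of Proposition \ref{Martin} — is the same pattern the paper uses for the \emph{gradient} analogues (Theorem \ref{H} and Theorem \ref{lpweakgranmartin1}, both resting on Proposition \ref{bvivier}). Your reduction step and the scaling heuristics for Parts II(i) and II(ii) are sound in outline.

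There is, however, a genuine gap in Part I. The claim that ``$d_\Sigma(x)\gtrsim 1$ on the relevant region'' is false whenever $\am>0$ (i.e.\ $\mu>0$). For $\xi\in\partial\Omega$ the kernel is $K_\mu(x,\xi)\asymp d(x)d_\Sigma(x)^{-\am}|x-\xi|^{-N}$; near $\Sigma$ one has $d(x)\asymp 1$ and $|x-\xi|\asymp 1$, so $K_\mu(x,\xi)\asymp d_\Sigma(x)^{-\am}\to\infty$. The super-level set $\{K_\mu(\cdot,\xi)>t\}$ therefore contains the full tube $\{d_\Sigma<c\,t^{-1/\am}\}$, and the weight $\phi_\mu\asymp d_\Sigma^{-\am}$ is itself singular there. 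This region is exactly what the paper isolates in the split \eqref{split-K} inside the proof of Theorem \ref{H}(i). Its contribution to the distribution function is
\bal
\int_{\{d_\Sigma(x)<c\,t^{-1/\am}\}\cap\Sigma_{\beta_1}} d_\Sigma(x)^{-\am}\,\dx \;\asymp\; t^{-\frac{N-k-\am}{\am}},
\eal
and one must separately verify that $\tfrac{N-k-\am}{\am}\geq\tfrac{N+1}{N-1}$. This inequality does hold, but only because $\am\leq H=\tfrac{N-k-2}{2}$; it is not automatic and cannot be dismissed by ignoring the tube. Without this piece the argument does not establish the stated weak-$L^{(N+1)/(N-1)}$ bound. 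The fix is precisely the two-region decomposition of the paper: bound the away-from-$\Sigma$ piece by your scaling argument, bound the near-$\Sigma$ tube using Lemma \ref{lemapp:1} (or the direct tubular computation above) and the constraint $\am\leq H$, and take the maximum of the two decays.

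A smaller remark: in Part II(i) your phrase ``invoking Lemma \ref{lemapp:1} to control the tubular neighborhood'' glosses over the fact that the super-level set is anisotropic (a set of the form $|x-\xi|\lesssim t^{-1/(N-2-2\am)}d_\Sigma(x)^{-\am/(N-2-2\am)}$ rather than a ball), so the exponents fed into Lemma \ref{lemapp:1} must be chosen carefully; the isotropic-scaling heuristic alone does not deliver the required decay uniformly in $\xi\in\Sigma$. The computation does close (the exponent collapses to $\tfrac{N-\am}{N-\am-2}$), but you should display it.
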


In order study problem \eqref{sourceprobrho}, we also need to establish estimates on the gradient of $\BBG_{\mu}$ and $\BBK_{\mu}$ in weak Lebesgue spaces. To this purpose, we invoke a result from \cite{BVi}.

\begin{proposition}[{\cite[Lemma 2.4]{BVi}}] \label{bvivier}
	Assume $D$ is a bounded domain in $\R^N$ and denote by $\tilde D$ either the set $D$ or the boundary $\partial D$. Let $\gw$ be a nonnegative bounded Radon measure in $\tilde D$ and $\eta\in C(D)$ be a positive weight function. Let $\CH$ be a continuous nonnegative function
	on $\{(x,y)\in D\times \tilde D:\;x\neq y\}.$ For any $\xl > 0$ we set
	\bal
	A_\xl(y):=\{x\in D\setminus\{y\}:\;\; \CH(x,y)>\xl\}\quad \text{and} \quad
	m_{\xl}(y):=\int_{A_\xl(y)}\eta(x) \, \dd x.
	\eal
	Suppose that there exist $C>0$ and $\kappa>1$ such that $m_{\xl}(y)\leq C\xl^{-\kappa}$ for every $\gl>0$.  Then the operator
	\bal
{\mathds H}[\gw](x):=\int_{\tilde D}\CH(x,y)\dd\gw(y)
\eal
	belongs to $L^\kappa_w(D;\eta )$ and
	\bal
\left|\left|{\mathds H}[\gw]\right|\right|_{L^\kappa_w(D;\eta)}\leq (1+\frac{C\xk}{\kappa-1})\gw(\tilde D).
\eal
\end{proposition}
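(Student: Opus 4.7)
The plan is to carry out a classical Marcinkiewicz-type argument, combining Fubini–Tonelli with the layer-cake formula and optimization of a splitting parameter. Fix a measurable set $A \subset D$ with $0 < \eta(A) := \int_A \eta\,\dx < \infty$; the goal is to bound the ratio $\eta(A)^{-(1-1/\kappa)} \int_A \mathds{H}[\omega](x)\,\eta(x)\,\dx$ by a multiple of $\omega(\tilde D)$, uniformly in $A$, and then take the supremum.

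First I would use Fubini–Tonelli (justified by the nonnegativity of $\CH$ and $\eta$ and by $\omega(\tilde D) < \infty$) to write
\begin{equation*}
\int_A \mathds{H}[\omega](x)\,\eta(x)\,\dx \;=\; \int_{\tilde D} \left( \int_A \CH(x,y)\,\eta(x)\,\dx \right) \dd\omega(y).
\end{equation*}
Then, for each fixed $y \in \tilde D$, I would apply the layer-cake representation, noting that $\{x \in A: \CH(x,y) > \lambda\} = A \cap A_\lambda(y)$, to obtain
\begin{equation*}
\int_A \CH(x,y)\,\eta(x)\,\dx \;=\; \int_0^\infty \eta\bigl(A \cap A_\lambda(y)\bigr)\,d\lambda \;\leq\; \int_0^\infty \min\bigl\{\eta(A),\, m_\lambda(y)\bigr\}\,d\lambda.
\end{equation*}

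Next I would split the integral at a threshold $\lambda_0 > 0$ to be chosen, bounding the integrand by $\eta(A)$ on $(0,\lambda_0)$ and by $m_\lambda(y) \leq C\lambda^{-\kappa}$ on $(\lambda_0, \infty)$:
\begin{equation*}
\int_0^\infty \min\bigl\{\eta(A),\, m_\lambda(y)\bigr\}\,d\lambda \;\leq\; \lambda_0\,\eta(A) \;+\; C \int_{\lambda_0}^\infty \lambda^{-\kappa}\,d\lambda \;=\; \lambda_0\,\eta(A) \;+\; \frac{C\,\lambda_0^{1-\kappa}}{\kappa - 1}.
\end{equation*}
Taking the (essentially) optimal choice $\lambda_0 = (C/\eta(A))^{1/\kappa}$ collapses the right-hand side to a constant times $\eta(A)^{1-1/\kappa}$, with the constant depending only on $C$ and $\kappa$ and independent of $y$. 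Importantly, the bound is uniform in $y$, which is exactly why the hypothesis was formulated that way.

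Finally I would integrate this uniform bound against $\dd\omega(y)$, which pulls out the factor $\omega(\tilde D)$, and then divide by $\eta(A)^{1-1/\kappa}$ and take the supremum over admissible $A$ to recover the definition of $\|\cdot\|_{L^\kappa_w(D;\eta)}$. There is no substantial obstacle in this argument; the only point requiring care is the measurability of $y \mapsto m_\lambda(y)$ and of $y \mapsto \int_A \CH(x,y)\eta(x)\,\dx$, which follows from the continuity of $\CH$ off the diagonal and Fubini–Tonelli. The explicit constant appearing in the statement, namely $1 + C\kappa/(\kappa-1)$, can be read off from the two terms of the splitting (with a slightly suboptimal choice of $\lambda_0$, e.g.\ comparing with $\lambda_0 = 1$ or using $\eta(A) \leq 1 + \eta(A)^{1-1/\kappa}\cdot\eta(A)^{1/\kappa}$ to absorb the boundary term).
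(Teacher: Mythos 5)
Your proof is correct and is the standard Marcinkiewicz/layer-cake argument; the paper does not reproduce a proof but simply cites \cite[Lemma 2.4]{BVi}, and your argument is exactly the one that reference uses. One small observation: the simple (non-optimal) choice $\lambda_0=\eta(A)^{-1/\kappa}$ already yields the bound $\bigl(1+\tfrac{C}{\kappa-1}\bigr)\eta(A)^{1-1/\kappa}\,\omega(\tilde D)$, which is even sharper than the stated $\bigl(1+\tfrac{C\kappa}{\kappa-1}\bigr)\omega(\tilde D)$, so there is no need to hunt for a splitting that reproduces the stated constant exactly.
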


Next let us introduce some notations. For $\alpha \in \R$, define
\bel{varphia} \varphi_{\alpha}(x):= d_\Sigma(x)^{-\alpha}d(x), \quad x \in \Omega \setminus \Sigma.
\ee
For $\kappa, \theta,\gamma\in \R$, we define
\bel{F1}
F_{\kappa,\theta}(x,y):=d_\Sigma(x)^{\kappa} |x-y|^{-N+2+\theta}d(y)^{-1}\min(d(x),d_\xS(x),|x-y|)^{-1} \left(  1 \land \frac{d(x)d(y)}{|x-y|^2}\right),
\ee
for $x \neq y, x,y \in \Omega \setminus \Sigma$, and for any positive function $\varphi$ defined on $\Omega \setminus \Sigma$, set
\bal
\mathbb{F}_{\kappa,\theta}[\varphi \tau](x):=\int_{\Omega \setminus \Sigma}F_{\kappa,\theta}(x,y)\varphi(y)\dd\tau(y), \quad \tau \in \GTM(\Omega  \setminus \Sigma;\varphi).
\eal

With the above notations, we obtain the following weak $L^p$ estimate:

\begin{lemma} \label{anisotitaweakF1a}
Let $0<\alpha\leq H$, where $H$ is defined in \eqref{valueH}. Then
\bel{estF1}
	\| \mathbb{F}_{-\xa,2\xa}[\varphi_{\alpha}\tau]\|_{L_w^{\frac{N+1}{N}}(\Gw\setminus \Sigma;\varphi_{\alpha} )} \lesssim
	\|\gt\|_{\mathfrak{M}(\xO\setminus \Sigma;\varphi_{\alpha})}, \quad \forall \tau\in \mathfrak{M}(\xO\setminus \Sigma; \varphi_{\alpha}).
	\ee
	The implicit constant in \eqref{estF1} depends on $N,\Omega,\Sigma,\alpha$.
\end{lemma}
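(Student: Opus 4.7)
The plan is to apply Proposition \ref{bvivier} to the positive Radon measure $d\omega := \varphi_\alpha \, d|\tau|$ on $\Omega \setminus \Sigma$, whose total mass equals $\|\tau\|_{\mathfrak{M}(\Omega\setminus\Sigma;\varphi_\alpha)}$. Writing
\[
\mathbb{F}_{-\alpha,2\alpha}[\varphi_\alpha \tau](x) = \int_{\Omega \setminus \Sigma} F_{-\alpha,2\alpha}(x,y) \, d\omega(y),
\]
with kernel $\mathcal{H} := F_{-\alpha,2\alpha}$, weight $\eta := \varphi_\alpha$ and target exponent $\kappa := (N+1)/N$, the problem reduces to the distributional bound
\[
m_\lambda(y) := \int_{\{x \in \Omega \setminus \Sigma\,:\, F_{-\alpha,2\alpha}(x,y) > \lambda\}} \varphi_\alpha(x)\, dx \leq C \lambda^{-(N+1)/N},
\]
uniformly in $y \in \Omega \setminus \Sigma$ and $\lambda > 0$, with $C$ depending only on $N, \Omega, \Sigma, \alpha$.

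To produce such a bound, I would decompose the super-level set according to which of $d(x)$, $d_\Sigma(x)$, $|x-y|$ realises the minimum in $\min(d(x),d_\Sigma(x),|x-y|)$, and according to whether $|x-y|^2 \leq d(x)d(y)$ or not. In each of the resulting regimes the kernel $F_{-\alpha,2\alpha}(x,y)$ simplifies to a product of powers of $|x-y|$, $d(x)$, $d_\Sigma(x)$, $d(y)$. After using $d(x) \asymp d(y)$ in pieces where $|x-y| \lesssim d(x)$ and $d_\Sigma(x) \asymp d_\Sigma(y)$ in pieces where $|x-y| \lesssim d_\Sigma(x)$, the super-level condition $F_{-\alpha,2\alpha}(\cdot, y) > \lambda$ translates into a Carleson-type set of the form $\{d_\Sigma(x) \leq \ell_1,\ |x-y| \leq \ell_2 d_\Sigma(x)^{\alpha_2}\}$, with $\ell_1, \ell_2, \alpha_2$ explicit functions of $\lambda$ and of $d(y), d_\Sigma(y)$.

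To integrate the weight $\varphi_\alpha = d_\Sigma^{-\alpha} d$ over such sets I invoke Lemma \ref{lemapp:1}, which provides the bound $\lesssim \ell_1^{N-k+\alpha_1+k\alpha_2}\, \ell_2^k$ provided $N-k+\alpha_1+k\alpha_2 > 0$. The hypothesis $0 < \alpha \leq H = (N-k-2)/2$ is used precisely to ensure positivity of this exponent in each regime, and also guarantees the finiteness of $\int_\Omega \varphi_\alpha\, dx$, yielding the trivial bound $m_\lambda(y) \leq C$ needed for small $\lambda$. Substituting the values of $\ell_1, \ell_2$ in each regime and combining with the trivial bound produces the required $\lambda^{-(N+1)/N}$ decay.

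The main obstacle will be the bookkeeping of exponents across the several off-diagonal regimes, together with the uniform handling of points $y$ simultaneously close to $\Sigma$ and to $\partial\Omega$. The critical exponent $(N+1)/N$ emerges from the balance in the regime $|x-y|^2 > d(x)d(y)$, where the factor $d(x)d(y)/|x-y|^2$ inside $F_{-\alpha,2\alpha}$ trades a power of $|x-y|^{-1}$ for one of $d(x)$, and the weight $\varphi_\alpha = d_\Sigma^{-\alpha}d$ provides exactly the correction that cancels the $d_\Sigma$-dependence and leaves the $(N+1)/N$ scaling uniform in $y$.
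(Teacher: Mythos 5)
Your plan is correct and follows essentially the same route as the paper: reduce via Proposition~\ref{bvivier} to a uniform distributional bound on $m_\lambda(y)$, decompose the super-level set, estimate the pieces near $\Sigma$ via Lemma~\ref{lemapp:1}, and treat the region away from $\Sigma$ directly; the hypothesis $0<\alpha\leq H$ enters exactly as you say.

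The one place you take a slightly different (and more laborious) path is in removing the $y$-dependence from the kernel. You propose splitting according to whether $|x-y|^2\leq d(x)d(y)$ and exploiting local comparabilities such as $d(x)\asymp d(y)$ when $|x-y|\lesssim d(x)$, and you note that in intermediate steps $\ell_1,\ell_2$ still depend on $d(y),d_\Sigma(y)$. The paper instead applies at the outset the single pointwise inequality \eqref{app:6},
\[
1\wedge\frac{d(x)d(y)}{|x-y|^2}\leq 4\,\frac{d(y)}{d(x)},
\]
which immediately cancels the factor $d(y)^{-1}$ present in $F_{-\alpha,2\alpha}$ and yields the bound \eqref{52}, which depends on $y$ only through $|x-y|$. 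This collapses the case analysis near $\Sigma$ to the dichotomy $d_\Sigma(x)\lessgtr|x-y|$ and makes the uniformity in $y$ manifest from the start. Your version should still close, but be careful: when $\ell_1,\ell_2$ are written as functions of $\lambda,d(y),d_\Sigma(y)$, the final bound on $m_\lambda(y)$ must in the end be independent of $y$; the cancellation you allude to in your last paragraph is precisely what \eqref{app:6} delivers in one line, and you will need to verify it explicitly in each of your regimes. Also note that the paper treats only $H<\frac{N-2}{2}$ explicitly (i.e.\ $k\geq1$), mentioning that $\Sigma=\{0\}$ is similar; your use of Lemma~\ref{lemapp:1} implicitly assumes the same reduction, which is fine but worth stating.
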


\begin{proof}
Without loss of generality, we may assume that $\tau \geq 0$. Set
\bal
A_\xl(y)&:=\Big\{x\in(\xO\setminus \Sigma)\setminus\{y\}:\;\; F_{-\xa,2\alpha}(x,y)>\xl \Big \},\\ \nonumber
A_{\xl,1}(y)&:=\Big\{x\in(\xO\setminus \Sigma)\setminus\{y\}:\;\; F_{-\xa,2\alpha}(x,y)>\xl\;\text{ and}\;d_\Sigma(x)\leq |x-y| \Big \},\\ \nonumber
A_{\xl,2}(y)&:=\Big\{x\in(\xO\setminus \Sigma)\setminus\{y\}:\;\; F_{-\xa,2\alpha}(x,y)>\xl\;\text{ and}\;d_\Sigma(x)> |x-y| \Big \},\\ \nonumber
m_{\xl}(y)&:=\int_{A_\xl(y)}\varphi_{\alpha} \dx,\quad m_{\xl,i}(y):=\int_{A_{\xl,i}(y)}\varphi_{\alpha}\dx, \quad i=1,2.
\eal
Then
$A_\xl(y)=A_{\xl,1}(y)\cup A_{\xl,2}(y)$
and
\bal
m_{\xl}(y)= m_{\xl,1}(y)+ m_{\xl,2}(y).
\eal

Let $\beta_1$ be as in \eqref{cover}. We write
\ba\label{ml1F2-0}
m_{\xl}(y)=\int_{A_\xl(y)\cap \Sigma_{ \frac{\beta_1}{4} }}d(x)d_\Sigma(x)^{-\alpha} \dx+\int_{A_\xl(y)\setminus \Sigma_{ \frac{\beta_1}{4}}}d(x)d_\Sigma(x)^{-\alpha} \dx.
\ea
We deal only with the case  $H<\frac{N-2}{2}$ since the case $H=\frac{N-2}{2}$ (i.e. $\Sigma=\{0\}$) can be treated in a similar way.

We split the first term on the right hand side of \eqref{ml1F2-0} as
\ba \label{mcompose-1} \int_{A_{\xl}(y)\cap \Sigma_{ \frac{\beta_1}{4} }}d(x)d_\Sigma(x)^{-\alpha} \dx = \int_{A_{\xl,1}(y)\cap \Sigma_{ \frac{\beta_1}{4} }}d(x)d_\Sigma(x)^{-\alpha} \dx + \int_{A_{\xl,2}(y)\cap \Sigma_{ \frac{\beta_1}{4} }}d(x)d_\Sigma(x)^{-\alpha} \dx.
\ea
We note that
\bel{app:6}
1 \land \frac{d(x)d(y)}{|x-y|^2}  \leq2\left( 1 \land \frac{d(x)}{|x-y|} \right)\left( 1 \land \frac{d(y)}{|x-y|} \right)\leq 2\left( 1 \land \frac{d(y)}{|x-y|} \right)  \leq 4  \frac{d(y)}{d(x)}, \; \forall x,y \in \Omega, \; x \neq y,
\ee
therefore
\be\label{52}
F_{-\xa,2\alpha}(x,y)\leq 4 d_\Sigma(x)^{-\xa} d(x)^{-1}\min(d(x),d_\xS(x),|x-y|)^{-1}|x-y|^{-N+2+2\xa},\; \forall x,y \in \Omega \setminus \Sigma, \; x \neq y.
\ee
Since $0<\alpha <\frac{N-2}{2}$, from \eqref{52} we see that
\bal
A_{\xl,1}(y)\cap \Sigma_{ \frac{\beta_1}{4} }
\subset \Big\{x\in(\xO\setminus \Sigma)\setminus\{y\}:\; d_\Sigma(x) < c\lambda^{-\frac{1}{N-1-\alpha}},\;  |x-y| <c\lambda^{-\frac{1}{N-2-2\alpha}}d_\Sigma(x)^{-\frac{\xa+1}{N-2-2\alpha}} \Big \}.
\eal
By using Lemma \ref{lemapp:1} with $\alpha_1=-\alpha$, $\alpha_2=-\frac{\xa+1}{N-2-2\alpha}$, $\ell_1=\lambda^{-\frac{1}{N-1-\alpha}}$, $\ell_2=\lambda^{-\frac{1}{N-2-2\alpha}}$ and noting that $N-k - \alpha -\frac{k(\xa+1)}{N-2-2\alpha} \geq 1$ since $\alpha\leq H$, we deduce, for $\lambda \geq 1$,
\ba \label{ca1-1.1}
\int_{A_{\xl,1}(y)\cap \Sigma_{ \frac{\beta_1}{4} }}d(x)d_\Sigma(x)^{-\alpha} \dx \lesssim \lambda^{-\frac{N-\alpha}{N-1-\alpha}} \leq \lambda^{-\frac{N+1}{N}}.
\ea
Next, by \eqref{52}, we see that
\bal
A_{\lambda,2}(y)\cap \Sigma_{\frac{\beta_1}{4}} \subset \left\{ x \in \Omega \setminus \Sigma:  |x-y| < c\lambda^{-\frac{1}{N-1-\alpha}} \text{ and } d_\Sigma(x) > |x-y| \right\}.
\eal
Therefore, for every $\lambda \geq 1$,
\ba \label{ca1-1.3} \begin{aligned}
\int_{A_{\lambda,2}(y)\cap \Sigma_{\frac{\beta_1}{4}}}d(x)d_\Sigma(x)^{-\alpha}\dx
\lesssim \int_{\{|x-y|\leq c\lambda^{-\frac{1}{N-1-\alpha}}\}}|x-y|^{-\alpha}\dx
\lesssim \lambda^{-\frac{N-\alpha}{N-1-\alpha}} \leq \lambda^{-\frac{N+1}{N}}.
\end{aligned} \ea
Combining \eqref{mcompose-1}, \eqref{ca1-1.1} and \eqref{ca1-1.3} yields, for any $\lambda \geq 1$,
\be \label{AA1}
\int_{A_{\xl}(y)\cap \Sigma_{ \frac{\beta_1}{4} }}d(x)d_\Sigma(x)^{-\alpha} \dx \lesssim \lambda^{-\frac{N+1}{N}}.
\ee

Next we estimate the second term on the right hand side of \eqref{ml1F2-0}. By \eqref{app:6}, we have
\bal
A_\xl(y)\cap (\xO\setminus\Sigma_{ \frac{\beta_1}{4}})\subset\left\{ x \in \Omega \setminus \Sigma:  |x-y| < c\lambda^{-\frac{1}{N}} \text{ and } d(x) \leq \xl^{-1}|x-y|^{-N+1} \right\}.
\eal
This yields, for $\lambda \geq 1$,
\ba \label{AA-2} \BAL
\int_{A_{\lambda}(y) \setminus \Sigma_{\frac{\beta_1}{4}}} d(x)d_{\Sigma}(x)^{-\alpha}\dx &\lesssim\int_{\{|x-y| < c\lambda^{-\frac{1}{N}}\}} \xl^{-1}|x-y|^{-N+1}\dx \lesssim \lambda^{-\frac{N+1}{N}}.
\EAL
\ea

Combining \eqref{ml1F2-0}, \eqref{AA1} and \eqref{AA-2} yields
\bel{ca2-1.5}
m_{\lambda}(y)\leq  C\lambda^{-\frac{N+1}{N}},  \quad \forall \lambda>0,
\ee
where $C=C(N,\Omega,\Sigma,\alpha)$.
By applying Proposition \ref{bvivier} with $\CH(x,y)=F_{-\xa,2\alpha}(x,y),$ $\tilde D=D=\Omega \setminus \Sigma$, $\eta=d\, d_\Sigma^{-\alpha}$ and $\omega=d\, d_\Sigma^{-\alpha} \tau$ and using \eqref{ca2-1.5}, we finally derive \eqref{estF1}.
\end{proof}

By a similar argument as in the proof of Lemma \ref{anisotitaweakF1a}, we can establish the following lemma whose proof is omitted.
\begin{lemma} \label{anisotitaweakF1b}
Let $0<\alpha\leq H$, where $H$ is defined in \eqref{valueH}. Then
\bel{estF1b}
	\| \mathbb{F}_{\xa,0}[\varphi_{\alpha}\tau]\|_{L_w^{\frac{N+1}{N}}(\Gw\setminus \Sigma;\varphi_{\alpha} )} \lesssim
	\|\gt\|_{\mathfrak{M}(\xO\setminus \Sigma;\varphi_{\alpha})}, \quad \forall \tau\in \mathfrak{M}(\xO\setminus \Sigma; \varphi_{\alpha}).
	\ee
	The implicit constant in \eqref{estF1b} depends on $N,\Omega,\Sigma,\alpha$.
\end{lemma}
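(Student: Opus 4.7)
The plan is to adapt the proof of Lemma \ref{anisotitaweakF1a} verbatim to the kernel $F_{\alpha,0}$, invoking Proposition \ref{bvivier} with $\CH = F_{\alpha,0}$, $D = \tilde D = \Omega\setminus\Sigma$, $\eta = \varphi_\alpha$, $\omega = \varphi_\alpha\tau$, and target exponent $\kappa = (N+1)/N$. It therefore suffices to prove the level-set estimate $m_\lambda(y) := \int_{A_\lambda(y)}\varphi_\alpha\dx \lesssim \lambda^{-(N+1)/N}$ uniformly in $y\in\Omega\setminus\Sigma$ and $\lambda>0$, where $A_\lambda(y) = \{x : F_{\alpha,0}(x,y) > \lambda\}$. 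Starting from \eqref{app:6} exactly as in the derivation of \eqref{52}, I obtain the pointwise bound
\bal F_{\alpha,0}(x,y) \leq 4\,d_\Sigma(x)^\alpha\,d(x)^{-1}\,\min(d(x),d_\Sigma(x),|x-y|)^{-1}\,|x-y|^{-N+2}.\eal
I split $A_\lambda(y)$ into the near part $A_\lambda(y)\cap\Sigma_{\beta_1/4}$ (where $d(x)\gtrsim1$ since $\Sigma_{6\beta_0}\Subset\Omega$) and the far part $A_\lambda(y)\setminus\Sigma_{\beta_1/4}$, and I decompose the near part further according to whether $d_\Sigma(x)\leq|x-y|$ (Case 1) or $d_\Sigma(x) > |x-y|$ (Case 2), precisely as in the proof of Lemma \ref{anisotitaweakF1a}.

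On the far part, $d_\Sigma^{\pm\alpha}$ are bounded so $F_{\alpha,0}\leq C F_{0,0}$ and the estimate leading to \eqref{AA-2} applies verbatim, yielding a contribution $\lesssim \lambda^{-(N+1)/N}$. On $A_\lambda(y)\cap\{d_\Sigma\leq|x-y|\}\cap\Sigma_{\beta_1/4}$ the minimum is $d_\Sigma(x)$, giving $F_{\alpha,0}\lesssim d_\Sigma(x)^{\alpha-1}|x-y|^{-N+2}$; combining $F_{\alpha,0} > \lambda$ with $d_\Sigma(x)\leq|x-y|$ forces $d_\Sigma(x) < c\lambda^{-1/(N-1-\alpha)}$ and $|x-y| < c\lambda^{-1/(N-2)}d_\Sigma(x)^{(\alpha-1)/(N-2)}$, so Lemma \ref{lemapp:1} with $\alpha_1 = -\alpha$, $\alpha_2 = (\alpha-1)/(N-2)$, $\ell_1 = c\lambda^{-1/(N-1-\alpha)}$, $\ell_2 = c\lambda^{-1/(N-2)}$ produces $\lambda^{-(N-\alpha)/(N-1-\alpha)} \leq \lambda^{-(N+1)/N}$ after the exponents telescope. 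The positivity hypothesis $N-k-\alpha + k(\alpha-1)/(N-2) > 0$ follows from $\alpha\leq H$ by a short computation analogous to the one carried out in Lemma \ref{anisotitaweakF1a}.

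The main obstacle is Case 2, where the structure genuinely differs from the one in Lemma \ref{anisotitaweakF1a}: here $F_{\alpha,0}\lesssim d_\Sigma(x)^\alpha|x-y|^{-N+1}$, and the naive bounds $d_\Sigma(x)^\alpha\leq C$ together with $d_\Sigma(x)^{-\alpha} < |x-y|^{-\alpha}$ (as used for $F_{-\alpha,2\alpha}$) yield only $\lambda^{-(N-\alpha)/(N-1)}$, which falls short of $(N+1)/N$ whenever $\alpha > 1/N$. The resolution is to reverse the direction of exploitation: because $d_\Sigma(x)^\alpha$ enters with a positive power, the inequality $F_{\alpha,0}(x,y) > \lambda$ now yields a \textit{lower} bound $d_\Sigma(x) > c\lambda^{1/\alpha}|x-y|^{(N-1)/\alpha}$, and hence an \textit{upper} bound $d_\Sigma(x)^{-\alpha} < c^{-\alpha}\lambda^{-1}|x-y|^{-(N-1)}$ on the relevant set; moreover $d_\Sigma(x)\leq\beta_1/4$ forces $|x-y| < C\lambda^{-1/(N-1)}$. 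Integrating in polar coordinates around $y$,
\bal \int_{A_\lambda(y)\cap\{d_\Sigma > |x-y|\}\cap\Sigma_{\beta_1/4}} d(x)\,d_\Sigma(x)^{-\alpha}\dx \lesssim \lambda^{-1}\int_{|x-y| < C\lambda^{-1/(N-1)}}|x-y|^{-(N-1)}\dx \lesssim \lambda^{-N/(N-1)} \leq \lambda^{-(N+1)/N}.\eal
Summing the three contributions produces $m_\lambda(y)\lesssim\lambda^{-(N+1)/N}$ for $\lambda\geq1$, and this bound extends to all $\lambda>0$ since $\int_\Omega\varphi_\alpha\dx <\infty$ under $\alpha\leq H$. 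Applying Proposition \ref{bvivier} concludes the argument; the degenerate case $\Sigma=\{0\}$, $\mu=H^2$ is handled identically, as noted in the proof of Lemma \ref{anisotitaweakF1a}.
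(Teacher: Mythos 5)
Your proposal is correct and follows the same overall template as the proof of Lemma \ref{anisotitaweakF1a}, which the paper cites for this omitted proof: far/near decomposition, the subsplit of the near region into $d_\Sigma(x)\leq|x-y|$ and $d_\Sigma(x)>|x-y|$, Lemma \ref{lemapp:1} for the near part, and Proposition \ref{bvivier} to conclude. The one step where a verbatim transcription genuinely fails is exactly where you say it does: on $\{d_\Sigma(x)>|x-y|\}\cap\Sigma_{\beta_1/4}$ the kernel reduces to $F_{\alpha,0}\lesssim d_\Sigma(x)^\alpha|x-y|^{-N+1}$, and the move used for $F_{-\alpha,2\alpha}$ (bounding $d_\Sigma^{-\alpha}<|x-y|^{-\alpha}$) only yields the contribution $\lambda^{-(N-\alpha)/(N-1)}$, which falls below $\lambda^{-(N+1)/N}$ as soon as $\alpha>1/N$. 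Your repair is sound: reading $d_\Sigma(x)^\alpha|x-y|^{-N+1}>c\lambda$ directly as the upper bound $d_\Sigma(x)^{-\alpha}<c^{-1}\lambda^{-1}|x-y|^{1-N}$ (possible because $d_\Sigma^\alpha$ now carries a positive exponent), together with $d_\Sigma\leq\beta_1/4$ forcing $|x-y|<C\lambda^{-1/(N-1)}$, gives $\lambda^{-N/(N-1)}\leq\lambda^{-(N+1)/N}$. Your Case~1 computation is also correct: the exponents telescope to $\lambda^{-(N-\alpha)/(N-1-\alpha)}$, and the condition $N-k-\alpha+k(\alpha-1)/(N-2)>0$ required by Lemma \ref{lemapp:1} reduces under $\alpha\leq H$ to $\frac{1}{2}\bigl((N-k)^2-4+2k(N-k-1)\bigr)>0$, which holds since $N-k>2$. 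As a side-effect, your Case~1 involves the fixed denominator $N-2$ rather than $N-2-2\alpha$, so the endpoint $\alpha=H=(N-2)/2$ (when $k=0$) causes no degeneracy, in contrast with the analogous step in Lemma \ref{anisotitaweakF1a}.
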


Let $\varphi_{\frac{N-2}{2}}$ is defined in \eqref{varphia} with $\Sigma=\{0\} \subset \Omega$ and $\alpha=\frac{N-2}{2}$. Set
\bal
\tilde F(x,y):=|x|^{-\frac{N-2}{2}} d(y)^{-1} \left|\ln\left(1 \land \frac{|x-y|^2}{d(x)d(y)}\right)\right|, \quad  x \neq y,\; x,y \in \Omega \setminus \{0\},
\eal
\bal
\mathbb{\tilde F}[\varphi_{\frac{N-2}{2}}\tau](x):=\int_{\Omega \setminus \{0\}}\tilde F(x,y)\varphi_{\frac{N-2}{2}}(y)d\tau(y), \quad \tau \in \mathfrak{M}(\xO\setminus \{0\};\varphi_{\frac{N-2}{2}}).
\eal

\begin{lemma} \label{anisotitaweakF4}
We have
\bal
	\norm{\mathbb{\tilde F}[\varphi_{\frac{N-2}{2}}\gt]}_{L_w^{\frac{N+1}{N}}(\Gw\setminus \{0\};\varphi_{\frac{N-2}{2}})}
	\lesssim \norm{\gt}_{\mathfrak{M}(\xO\setminus \{0\};\varphi_{\frac{N-2}{2}})}, \quad \forall \tau\in \mathfrak{M}(\Omega\setminus \{0\};\varphi_{\frac{N-2}{2}}).
\eal
The implicit constant depends on $N,\Omega$.
\end{lemma}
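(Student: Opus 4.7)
The plan is to reduce the estimate to Lemma \ref{anisotitaweakF1a} applied with $\alpha = H = (N-2)/2$ --- which is admissible since $\Sigma = \{0\}$ in the present critical setting --- by establishing a pointwise domination $\tilde F(x,y) \lesssim F_{-(N-2)/2,\,N-2}(x,y)$ on $(\Omega\setminus\{0\})^2$. Once this pointwise bound is in hand, integrating against $\varphi_{(N-2)/2}(y)\dd\tau(y)$ (we may assume $\tau \geq 0$ without loss of generality) and invoking the cited lemma gives the desired weak-$L^{(N+1)/N}$ bound immediately.

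The pointwise comparison is established on the set $S := \{(x,y)\in(\Omega\setminus\{0\})^2:\, |x-y|^2 < d(x)d(y)\}$, outside of which $\tilde F$ vanishes because $|\ln(1 \wedge t)| = 0$ for $t \geq 1$. First, I would use the elementary inequality $\ln u \leq 2\sqrt{u}$ for $u \geq 1$ with $u = d(x)d(y)/|x-y|^2$ to obtain $|\ln(1 \wedge |x-y|^2/(d(x)d(y)))| \leq 2\sqrt{d(x)d(y)}/|x-y|$ on $S$. Next I would observe that $d(x) \asymp d(y)$ on $S$: assuming WLOG $d(x) \leq d(y)$, the triangle inequality together with $|x-y|^2 < d(x)d(y)$ yields $d(y) \leq d(x) + \sqrt{d(x)d(y)}$, and setting $u = \sqrt{d(x)/d(y)} \in (0,1]$ this becomes $u^2 + u \geq 1$, forcing $u \geq (\sqrt{5}-1)/2$. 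A consequence is $|x-y| < \sqrt{d(x)d(y)} \lesssim d(x)$, whence $\min(d(x), |x|, |x-y|) \leq |x-y|$ on $S$. Combining these with $d(y) \leq \diam(\Omega)$ gives
\[
\tilde F(x,y) \lesssim |x|^{-(N-2)/2}\, d(y)^{-1}\, |x-y|^{-1} \qquad \text{on } S.
\]

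On the other hand, inspecting \eqref{F1} for $\Sigma=\{0\}$, $\kappa=-(N-2)/2$, $\theta=N-2$, one has $F_{-(N-2)/2, N-2}(x,y) = |x|^{-(N-2)/2} d(y)^{-1} \min(d(x), |x|, |x-y|)^{-1} (1 \wedge d(x)d(y)/|x-y|^2)$; on $S$ the two trailing factors satisfy $\min^{-1} \geq |x-y|^{-1}$ and $1 \wedge \cdots = 1$, yielding $F_{-(N-2)/2, N-2}(x,y) \geq |x|^{-(N-2)/2} d(y)^{-1} |x-y|^{-1}$. Hence $\tilde F \lesssim F_{-(N-2)/2, N-2}$ on $S$, and trivially off $S$ where $\tilde F = 0 \leq F_{-(N-2)/2,N-2}$. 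Passing to the integral operators gives $\mathbb{\tilde F}[\varphi_{(N-2)/2}\tau] \lesssim \mathbb{F}_{-(N-2)/2, N-2}[\varphi_{(N-2)/2}\tau]$ pointwise, and applying Lemma \ref{anisotitaweakF1a} with $\alpha = H = (N-2)/2$ (note that $\varphi_\alpha$ there coincides with $\varphi_{(N-2)/2}$ here) completes the proof. The only subtle point is the comparability $d(x) \asymp d(y)$ on $S$: without it, the factor $\sqrt{d(x)/d(y)}$ left by the square-root bound on the logarithm would spoil the clean comparison with $F_{-(N-2)/2,\, N-2}$. I expect this to be the main thing to verify, although the computation is entirely elementary.
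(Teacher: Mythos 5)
Your proof is correct and takes a genuinely different route from the paper's. The paper's proof of Lemma~\ref{anisotitaweakF4} observes the bound
\[
\tilde F(x,y)\leq 2\,d(y)^{-1}|x|^{-\frac{N-2}{2}}\left(-\ln\tfrac{|x-y|}{\CD_\Omega}\right)\left(1\wedge\tfrac{d(x)d(y)}{|x-y|^2}\right),
\]
where the logarithmic singularity is retained, and then re-runs the level-set/Lemma~\ref{bvivier} machinery of Lemma~\ref{anisotitaweakF1a} for this log-modified kernel. You instead absorb the log entirely via $\ln u\le 2\sqrt u$ for $u\ge1$ and exhibit the cleaner pointwise domination $\tilde F\lesssim F_{-\frac{N-2}{2},\,N-2}$, after which the statement follows at once from Lemma~\ref{anisotitaweakF1a} at $\alpha=H=\frac{N-2}{2}$ together with monotonicity of the weak-$L^{\kappa}$ quasi-norm. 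This is tidier and avoids redoing any measure estimates. Two small remarks. First, the comparability $d(x)\asymp d(y)$ on $S$ that you flag as the ``main thing to verify'' is in fact true but not needed: after the square-root bound one has the factor $\sqrt{d(x)d(y)}$, and $\sqrt{d(x)d(y)}\le \diam(\Omega)$ already follows from $d(x),d(y)\le\diam(\Omega)$, without any comparability between $d(x)$ and $d(y)$. Second, you are invoking Lemma~\ref{anisotitaweakF1a} precisely in the borderline case $\Sigma=\{0\}$, $\alpha=H=\frac{N-2}{2}$, which is the case the paper's own proof of that lemma declines to write out (``can be treated in a similar way''). This is not a flaw in your argument --- the lemma as stated covers $0<\alpha\le H$ --- but it does mean your reduction hands the residual work back to the unwritten endpoint case of Lemma~\ref{anisotitaweakF1a}, whereas the paper's log-modified route is presumably the intended way to carry out that ``similar'' treatment; if one wanted an entirely self-contained proof one would still need to address the degeneracy $N-2-2\alpha=0$ in the exponent bookkeeping of Lemma~\ref{anisotitaweakF1a}.
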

\begin{proof}
Recall that $\CD_\Omega:=2\sup_{x \in \Omega}|x|$. We note that
\bal
\tilde F(x,y)\leq 2d(y)^{-1}|x|^{-\frac{N-2}{2}}\left(-\ln\frac{|x-y|}{\CD_\Omega}\right) \left(  1 \land \frac{d(x)d(y)}{|x-y|^2} \right), \; \forall x\neq y,\;x,y\in \xO\setminus\{0\}.
\eal
By proceeding as in the proof of Lemma \ref{anisotitaweakF1a}, we obtain the desired result.
\end{proof}

For $\alpha,\xg,\theta \in \R$, put
\ba \label{Halthe} H_{\alpha,\theta,\xg}(x,y):=d(x)^{1-\xg}d_{\Sigma}(x)^{-\alpha-\xg} |x-y|^{-N+\theta}, \quad x \in \Omega \setminus \Sigma, y \in \partial \Omega \cup \Sigma,
\ea
\bal
\mathbb{H}_{\alpha,\theta,\xg}[\nu](x): = \int_{\partial \Omega \cup \Sigma} H_{\alpha,\theta,\xg}(x,y)\dd\nu(y), \quad \nu \in \GTM(\partial \Omega \cup \Sigma).
\eal

\begin{theorem} \label{H}

(i) Assume $k \geq 0$, $\alpha \leq H$, where $H$ is defined in \eqref{valueH}, $0\leq\xg\leq1$  and $\gn\in \mathfrak{M}(\partial\xO\cup \Sigma)$ with compact support on $\partial\xO$. Then
\ba \label{est:H1}
\norm{\mathbb{H}_{\alpha,0,\xg}[\nu]}_{L_w^{ \frac{N+1}{N-1+\xg}}(\Gw\setminus \Sigma;\varphi_{\alpha})} \lesssim \|\nu\|_{\mathfrak{M}(\partial\Omega \cup \Sigma)}.
\ea
The implicit constant in \eqref{est:H1} depends only on $N,\Omega,\Sigma,\alpha,\gamma$.

(ii) Assume $k>0$, $\alpha \leq H<\frac{N-2}{2}$  and $\gn\in \mathfrak{M}(\partial\xO\cup \Sigma)$ with compact support on $\Sigma$. Then
\ba \label{est:H2}
\norm{\mathbb{H}_{\alpha,2(\xa+1),\xg}[\nu]}_{L_w^{\frac{N-\alpha}{N-\alpha+\xg-2}}(\Gw\setminus \Sigma;\varphi_{\alpha})} \lesssim \norm{\nu}_{\mathfrak{M}(\partial \Omega \cup \Sigma)}.
\ea
The implicit constant in \eqref{est:H2} depends only on $N,\Omega,\Sigma,\alpha,\gamma$.

(iii) Assume $\Sigma=\{0\}$ and $\alpha=\frac{N-2}{2}$. For any $0<\xe<\frac{N+2}{N-2+2\xg}-1,$ there holds
\ba \label{est:H3}
\norm{\mathbb{H}_{\frac{N-2}{2},N,\xg}[\delta_0]}_{L_w^{\frac{N+2}{N-2 +2\gamma}-\xe}(\Gw\setminus \{0\};\varphi_{\frac{N-2}{2}})} \lesssim 1.
\ea
The implicit constant in \eqref{est:H3} depends only on $N,\Omega,\gamma,\xe$. Here $\xd_0$ denotes the Dirac measure concentrated at $0$.
\end{theorem}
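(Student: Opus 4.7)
The plan is to apply Proposition \ref{bvivier} in all three parts, with $\CH = H_{\alpha,\theta,\gamma}$, weight $\eta = \varphi_\alpha = d\cdot d_\Sigma^{-\alpha}$, and $\omega = \nu$. The task therefore reduces to showing, for each $y \in \supp\nu$ and each $\lambda > 0$,
\begin{equation*}
m_\lambda(y) := \int_{\{x\in\Omega\setminus\Sigma\,:\, H_{\alpha,\theta,\gamma}(x,y)>\lambda\}} \varphi_\alpha(x)\,\dd x \;\leq\; C\,\lambda^{-\kappa},
\end{equation*}
with $\kappa$ equal to the target weak-Lebesgue exponent. The architecture closely follows Lemmas \ref{anisotitaweakF1a}, \ref{anisotitaweakF1b} and \ref{anisotitaweakF4}; the new feature compared to those Green-type kernels is the extra factor $d(x)^{-\gamma}d_\Sigma(x)^{-\gamma}$. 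In each part I split the super-level set $A_\lambda(y)$ into a near-$\Sigma$ piece $\{x\in\Sigma_{\beta_1/4}\}$ and its complement, shrinking $\beta_1$ if necessary to secure clean geometric separations.

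For part (i), compactness of $\supp\nu\subset\partial\Omega$ gives $d_\Sigma(y)\geq c_0>0$ uniformly, so by choosing $\beta_1$ small I may assume $|x-y|\gtrsim 1$ and $d(x)\asymp 1$ whenever $x\in\Sigma_{\beta_1/4}$. On this piece $H_{\alpha,0,\gamma}(x,y)\asymp d_\Sigma(x)^{-\alpha-\gamma}$, so (when $\alpha+\gamma>0$) the near-$\Sigma$ contribution is $\lesssim\lambda^{-(N-k-\alpha)/(\alpha+\gamma)}$; a short algebraic check using $\alpha\leq H$ and $\gamma\leq 1$ confirms $(N-k-\alpha)/(\alpha+\gamma)\geq (N+1)/(N-1+\gamma)$, and the case $\alpha+\gamma\leq 0$ is trivial because then the near-$\Sigma$ set is empty for $\lambda$ large. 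On the complementary piece I exploit $d(x)\leq|x-y|$ (since $y\in\partial\Omega$) together with $\gamma\leq 1$ to deduce $H\lesssim |x-y|^{-(N-1+\gamma)}$, so $A_\lambda(y)$ lies in a half-ball of radius $\asymp\lambda^{-1/(N-1+\gamma)}$, and the standard estimate $\int_{B(y,R)\cap\Omega}d\,\dd x\lesssim R^{N+1}$ delivers the sought exponent $(N+1)/(N-1+\gamma)$.

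For part (ii), $\supp\nu\subset\Sigma$ gives $d(y)\geq 2\beta_0$, and $d(x)\asymp 1$ on $\Sigma_{\beta_1/4}$. The far-from-$\Sigma$ piece is empty for large $\lambda$ since $|x-y|\geq d_\Sigma(x)\gtrsim 1$ there. On the near-$\Sigma$ piece the defining inequality reads $d_\Sigma(x)^{-\alpha-\gamma}|x-y|^{-(N-2-2\alpha)}\gtrsim\lambda$, and combining with $d_\Sigma(x)\leq|x-y|$ yields simultaneously
\begin{equation*}
|x-y|\lesssim\lambda^{-\frac{1}{N-2-2\alpha}}d_\Sigma(x)^{-\frac{\alpha+\gamma}{N-2-2\alpha}}\quad\text{and}\quad d_\Sigma(x)\lesssim\lambda^{-\frac{1}{N-2-\alpha+\gamma}}.
\end{equation*}
I then invoke Lemma \ref{lemapp:1} with $\alpha_1=-\alpha$, $\alpha_2=-\frac{\alpha+\gamma}{N-2-2\alpha}$, $\ell_1\asymp\lambda^{-\frac{1}{N-2-\alpha+\gamma}}$ and $\ell_2\asymp\lambda^{-\frac{1}{N-2-2\alpha}}$; the positivity $N-k+\alpha_1+k\alpha_2>0$ is secured by $\alpha\leq H<\frac{N-2}{2}$, and the exponents collapse to $[(N-k-\alpha)+k](N-2-2\alpha)/[(N-2-\alpha+\gamma)(N-2-2\alpha)] = (N-\alpha)/(N-\alpha-2+\gamma)$, matching the target.

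Finally, for part (iii) the Dirac mass collapses the operator to $\mathbb{H}_{\frac{N-2}{2},N,\gamma}[\delta_0](x)=d(x)^{1-\gamma}|x|^{-\frac{N-2}{2}-\gamma}$. Bounding $d(x)^{1-\gamma}$ by a constant confines $A_\lambda$ to $\{|x|<C\lambda^{-\frac{2}{N-2+2\gamma}}\}$, whence
\begin{equation*}
\int_{|x|<R}d(x)|x|^{-\frac{N-2}{2}}\,\dd x\lesssim\int_0^R r^{N/2}\,\dd r\asymp R^{(N+2)/2}
\end{equation*}
yields the (actually stronger) bound $m_\lambda\lesssim\lambda^{-(N+2)/(N-2+2\gamma)}$; the $\varepsilon$ loss in the stated exponent is merely a safety margin to keep $\kappa>1$ in Proposition \ref{bvivier}, and the admissibility condition $\varepsilon<\frac{N+2}{N-2+2\gamma}-1$ guarantees precisely this. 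The chief technical obstacle throughout will be clean bookkeeping of the sub-cases and verifying, at each application of Lemma \ref{lemapp:1}, the positivity $N-k+\alpha_1+k\alpha_2>0$ under the standing hypothesis $\alpha\leq H$.
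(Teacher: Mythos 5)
Your proof is correct and follows the paper's approach: it applies Proposition \ref{bvivier} after establishing $m_\lambda(y)\lesssim\lambda^{-\kappa}$, splitting $A_\lambda(y)$ into near-$\Sigma$ and far pieces and invoking Lemma \ref{lemapp:1}.

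Two small remarks. In part (ii) the paper splits into $-\gamma<\alpha\leq H$ and $\alpha\leq-\gamma$, using an elementary bound $d_\Sigma^{-\alpha-\gamma}\leq|x-y|^{-\alpha-\gamma}$ in the second sub-case; you treat both sub-cases uniformly by applying Lemma \ref{lemapp:1} with $\alpha_2=-(\alpha+\gamma)/(N-2-2\alpha)$ of either sign. This is fine, since $N-k+\alpha_1+k\alpha_2=(N-k-\alpha)-k(\alpha+\gamma)/(N-2-2\alpha)$ multiplied by $N-2-2\alpha>0$ gives a convex quadratic in $\alpha$ with minimum to the right of $H$, and whose value at $\alpha=H$ is $k(2-\gamma)>0$, so the positivity hypothesis of the lemma holds throughout $\alpha\leq H$, $\gamma\leq 1$. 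In part (iii) (whose proof the paper omits, saying only that it is similar to (ii)), your direct computation of $m_\lambda\lesssim\lambda^{-(N+2)/(N-2+2\gamma)}$ is correct and is in fact the clean way to argue, since the template of (ii) cannot be applied verbatim when $N-2-2\alpha=0$. However, your explanation of the $\varepsilon$ as a ``safety margin to keep $\kappa>1$'' is not quite right: $(N+2)/(N-2+2\gamma)>1$ already whenever $\gamma<2$. The $\varepsilon$-loss in the theorem's statement is rather a concession to the logarithmic factor in the Martin kernel estimate \eqref{Martinest1-b}, which must be absorbed when Theorem \ref{H}(iii) is later used in Theorem \ref{lpweakgranmartin1}(ii). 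Finally, bounding $d(x)^{1-\gamma}$ by a constant in (iii) tacitly uses $\gamma\leq 1$, a restriction that is implicit in the paper's framework though not repeated in the statement of (iii).
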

\begin{proof} For $y\in \partial \Omega \cup \Sigma$, set
\bal
A_\xl(y):=\Big\{x\in(\xO\setminus \Sigma):\; H_{\alpha,\theta,\xg}(x,y)>\xl \Big \}, \quad m_{\xl}(y)&:=\int_{A_\xl(y)}d(x)d_\Sigma(x)^{-\alpha} \dx.
\eal	
We write
\ba \label{split-K}
m_{\xl}(y)=\int_{A_\xl(y)\cap \Sigma_{\xb_1}}d(x)d_\Sigma(x)^{-\alpha} \dx+\int_{A_\xl(y)\setminus \Sigma_{\xb_1}}d(x)d_\Sigma(x)^{-\alpha} \dx.
\ea

(i) Assume $\gn\in \mathfrak{M}(\partial\xO\cup \Sigma)$ with compact support on $\partial\xO$ and without loss of generality, we may assume that $\gn \geq 0$. Let $y \in \partial \Omega$ and $\theta=0.$

First we treat the first term on the right hand side of  \eqref{split-K}. If $-\xg<\alpha \leq H$ then by applying Lemma \ref{lemapp:1}, we obtain, for $\lambda \geq 1$,
\bal
\int_{A_\xl(y)\cap \Sigma_{\xb_1}}d_\Sigma(x)^{-\alpha} \dx \lesssim \int_{\{d_\Sigma(x)\leq c\lambda^{-\frac{1}{\alpha+\xg}}\} \cap \Sigma_{\beta_1}}d_\Sigma(x)^{-\alpha}\dx \lesssim \lambda^{-\frac{N-k-\alpha}{\alpha+\xg}} \leq \lambda^{ - \frac{N+1}{N-1+\xg}}.
\eal
If $\alpha \leq -\xg$ then there exists $\bar C=\bar C(N,\Omega,\Sigma,\alpha,\xg)>1$ such that for any $\lambda>\bar C$, $A_\lambda(y) \cap \Sigma_{\beta_1}=\emptyset$. Consequently, for all $\lambda>\bar C$,
\bel{ml3mb}
\int_{A_\xl(y)\cap \Sigma_{\xb_1}}d_\Sigma(x)^{-\alpha} \dx=0.
\ee

Next we treat the second term on the right hand side of \eqref{split-K}. By using the estimate $d(x) \leq |x-y|$, we see that, for $\lambda \geq 1$,
\ba \label{ml2ma}
\int_{A_\xl(y)\setminus \Sigma_{\xb_1}}d(x)\dx \lesssim \int_{\{|x-y|\leq c\lambda^{-\frac{1}{N-1+\xg}}\}}|x-y| \dx \lesssim \xl^{-\frac{N+1}{N-1+\xg}} .
\ea
Combining \eqref{ml3mb} and \eqref{ml2ma}, we obtain
\ba \label{ml4ma}
m_{\xl}(y)\leq C\lambda^{-\frac{N+1}{N-1+\xg}},
\ea
for all $\lambda>\bar C$, where $C=C(N,\Omega,\Sigma,\alpha,\xg)$. Then we can show that \eqref{ml4ma} holds true for all $\lambda>0$. By applying Proposition \ref{bvivier} with $\mathcal{H}(x,y)=H_{\alpha,0,\xg}(x,y)$, $\tilde D=D=\Omega \setminus \Sigma$, $\eta=\varphi_{\alpha}$ and $\omega=\xn$, we obtain \eqref{est:H1}.

(ii) Assume $\gn\in \mathfrak{M}(\partial\xO\cup \Sigma)$ with compact support on $\Sigma$ and without loss of generality, we may assume that $\gn \geq 0$. Let $y \in \Sigma$ and $\theta=2(\xa+1)$. \medskip

\noindent \textbf{Case 1:} $-\xg<\alpha \leq H$.
First we treat the first term in \eqref{split-K}. We notice that since $y \in \Sigma$, $d_\Sigma(x) \leq |x-y|$ for every $x \in \Omega \setminus \Sigma$, hence
\bal
A_\lambda(y) \subset \{ x \in \Omega \setminus \Sigma: d_\Sigma(x) \leq c\lambda^{-\frac{1}{N-\alpha+\xg-2}} \quad \text{and} \quad |x-y|<c\lambda^{-\frac{1}{N-2\xa-2}}d_\Sigma(x)^{-\frac{\alpha+\xg}{N-2\xa-2}}   \}.
\eal
Therefore, by applying Lemma \ref{lemapp:1}, we obtain
\ba \label{ml3maK-1}
\int_{A_\xl(y)\cap \Sigma_{\beta_1}}d_\Sigma(x)^{-\alpha} \dx \lesssim \lambda^{-\frac{N-\alpha}{N-\alpha+\xg-2}}.
\ea

Next we treat the second term in \eqref{split-K}. We see that there is $\bar C=\bar C(N,\Omega,\Sigma,\alpha,\theta)>1$ such that for any $\lambda>\bar C$, $\int_{A_\xl(y)\setminus \Sigma_{\beta_1}}d(x)\dx=0$. This, together with
\eqref{split-K}, \eqref{ml3maK-1}, implies
\ba \label{mlam-1}
m_{\xl}(y)\leq C\,\lambda^{-\frac{N-\alpha}{N-\alpha+\xg-2}}
\ea
for all $\lambda > \hat C$, where $C=C(N,\Omega,\Sigma,\alpha,\theta)$. \medskip

\noindent \textbf{Case 2:} $\alpha \leq -\xg$. By noting that $d_\Sigma(x)^{-\alpha-\xg} \leq |x-y|^{-\alpha-\xg}$ and $|x-y| \leq c \lambda^{-\frac{1}{N-2-\xa+\xg}}$ for every $x \in A_\lambda(y)$, we can easily obtain \eqref{mlam-1}.

From case 1 and case 2, by applying Proposition \ref{bvivier} with $\mathcal{H}(x,y)=H_{\alpha,2(\xa+1),\xg}(x,y)$, $D = \Omega \setminus \Sigma$, $\tilde D=\partial\xO\cup \xS,$ $\eta=\varphi_{\alpha}$ and $\omega=\xn$, we obtain \eqref{est:H2}. 

\medskip

(iii) The proof is very similar to the one in statement (ii) and is omitted.
\end{proof}
\smallskip

\subsection{Estimate on the gradient of Green kernel and Martin kernel}
In this subsection, we will establish estimates on the gradient of the Green kernel. A standard scaling argument implies that there exists a positive constant $C$ depending only on $N,\xO,\xS,\xm$ such that
\ba\label{gradientgreen}
|\nabla_x G_{\mu}(x,y)|\leq C\frac{G_\xm(x,y)}{\min(d(x),d_\xS(x),|x-y|)},\quad\forall x\neq y\;\;\text{ and } x,y\in\xO\setminus\xS.
\ea

Let $\ei$ be the eigenfunction of $-L_\mu$ (see Appendix \ref{subsect:eigen}), $\varphi_{\alpha}$ be as in \eqref{varphia} and $\BBG_{\mu}$ be as in \eqref{BBG}. For a measure $\tau$ on $\Omega \setminus \Sigma$, we have
\bal
\nabla\BBG_\mu[\tau](x)=\int_{\xO \setminus \Sigma}\nabla_x G_{\mu}(x,y)\dd\tau(y).
\eal
A combination of estimates in the previous subsection and estimate \eqref{gradientgreen} leads to estimates on the gradient of the Green operator in weak Lebesgue spaces.
\begin{theorem} \label{lpweakgrangreen}
Assume $k \geq 0$ and $0<\mu \leq H^2$.
Then
\ba \label{estgrangreen}
	\norm{\nabla\BBG_\mu[\gt]}_{L_w^{\frac{N+1}{N}}(\Gw\setminus \Sigma;\ei)} \lesssim \norm{\gt}_{\mathfrak{M}(\xO\setminus \Sigma;\ei)}, \quad \forall \tau\in \mathfrak{M}(\xO\setminus \Sigma;\ei).
\ea
	The implicit constant depends on $N,\Omega,\Sigma,\mu$.
\end{theorem}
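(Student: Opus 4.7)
My plan is to reduce \eqref{estgrangreen} to the weighted weak-type estimates already established in Lemmas~\ref{anisotitaweakF1a}, \ref{anisotitaweakF1b}, \ref{anisotitaweakF4}. The starting point is the pointwise scaling bound \eqref{gradientgreen}, which gives
\[
|\nabla\BBG_\mu[\tau](x)| \leq C\int_{\Omega\setminus\Sigma}\frac{G_\mu(x,y)}{\min(d(x),d_\Sigma(x),|x-y|)}\dd|\tau|(y),
\]
so everything reduces to dominating this integrand pointwise by a finite sum of kernels of the form $F_{\kappa,\theta}(x,y)\varphi_{\am}(y)$ (and, in the critical case, $\tilde F(x,y)\varphi_{(N-2)/2}(y)$) that appear in those lemmas. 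Switching at the end between the weights $\phi_\mu$ and $\varphi_{\am}$ is harmless because $\phi_\mu\asymp\varphi_{\am}$.

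For the subcritical regime covered by Proposition~\ref{Greenkernel}(i), that is either $k\geq 1$ with $\mu\leq H^2$, or $\Sigma=\{0\}$ with $\mu<(\tfrac{N-2}{2})^2$, I would split the off-diagonal set into
\[
A := \{(x,y):|x-y|^2\leq d_\Sigma(x)d_\Sigma(y)\}, \qquad B := \{(x,y):|x-y|^2>d_\Sigma(x)d_\Sigma(y)\}.
\]
On $A$ the Lipschitz inequality $|d_\Sigma(x)-d_\Sigma(y)|\leq|x-y|$ forces $d_\Sigma(x)\asymp d_\Sigma(y)$, so $(1\wedge d_\Sigma(x)d_\Sigma(y)/|x-y|^2)^{-\am}$ is of order one and a direct computation based on \eqref{Greenesta} yields $|\nabla_x G_\mu(x,y)|\lesssim F_{\am,0}(x,y)\varphi_{\am}(y)$. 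On $B$ the same factor equals $(|x-y|^2/(d_\Sigma(x)d_\Sigma(y)))^{\am}$ and one obtains $|\nabla_x G_\mu(x,y)|\lesssim F_{-\am,2\am}(x,y)\varphi_{\am}(y)$. Summing the two pointwise bounds and integrating against $\dd|\tau|$ produces
\[
|\nabla\BBG_\mu[\tau](x)|\lesssim \mathbb{F}_{\am,0}[\varphi_{\am}|\tau|](x)+\mathbb{F}_{-\am,2\am}[\varphi_{\am}|\tau|](x),
\]
and since the hypothesis $0<\mu\leq H^2$ places $\am$ in $(0,H]$, Lemmas~\ref{anisotitaweakF1b} and \ref{anisotitaweakF1a} deliver the target $L_w^{(N+1)/N}(\Omega\setminus\Sigma;\varphi_{\am})$ bound.

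It remains to treat the critical case $\Sigma=\{0\}$, $\mu=(\tfrac{N-2}{2})^2$ (so $\am=H=\tfrac{N-2}{2}$), which I would handle by decomposing $G_\mu=G_\mu^{(1)}+G_\mu^{(2)}$ according to \eqref{Greenestb}. The non-logarithmic summand $G_\mu^{(1)}$ has the same structure as in Proposition~\ref{Greenkernel}(i), so the previous paragraph applies verbatim. For the logarithmic summand $G_\mu^{(2)}(x,y)=(|x||y|)^{-(N-2)/2}|\ln(1\wedge|x-y|^2/(d(x)d(y)))|$, I would use $\min(d(x),|x|,|x-y|)^{-1}\leq d(x)^{-1}+|x|^{-1}+|x-y|^{-1}$ to split the resulting kernel into three pieces and estimate each of them by the level-set method of Proposition~\ref{bvivier}, following the scheme of the proof of Lemma~\ref{anisotitaweakF4}. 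The hardest point is precisely this last step: the extra factor $\min(\cdots)^{-1}$ makes the kernel strictly more singular than $\tilde F$, and the essential observation is that $|\ln(1\wedge|x-y|^2/(d(x)d(y)))|$ vanishes outside $\{|x-y|\leq\sqrt{d(x)d(y)}\}$, where $d(x)\asymp d(y)$; exploiting this, together with polar coordinates around $y$, should keep the $\varphi_{(N-2)/2}$-measure of each level set at the order $\lambda^{-(N+1)/N}$ required by the target weak-$L^{(N+1)/N}$ estimate.
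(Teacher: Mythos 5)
Your Case~1 argument is correct and arrives at the same reduction as the paper, namely $|\nabla_x G_\mu|\varphi_{\am}(y)^{-1}\lesssim F_{\am,0}+F_{-\am,2\am}$ followed by Lemmas~\ref{anisotitaweakF1a}--\ref{anisotitaweakF1b}, but by a slightly different elementary step. The paper works from the first form of \eqref{Greenesta} using $d_\Sigma(y)\leq|x-y|+d_\Sigma(x)$ and $(a+b)^{2\am}\lesssim a^{2\am}+b^{2\am}$ to split into the two kernels; you partition the off-diagonal set into $\{|x-y|^2\leq d_\Sigma(x)d_\Sigma(y)\}$ (where $d_\Sigma(x)\asymp d_\Sigma(y)$, which you justify correctly) and its complement, and read each kernel off from the second form. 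The two routes are of equal length and equivalent in substance.

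For Case~2 you have put your finger on a real subtlety that the paper's text glosses over. The paper asserts $G_{H^2}\varphi_{\frac{N-2}{2}}^{-1}\lesssim(F_{-\frac{N-2}{2},N-2}+F_{\frac{N-2}{2},0}+\tilde F)\min(d(x),|x|,|x-y|)$ and then divides by $\min(\cdot)$ via \eqref{gradientgreen}. But the logarithmic part of $G_{H^2}\varphi_{\frac{N-2}{2}}^{-1}$ is exactly $\tilde F$, so the displayed inequality restricted to the log term reads $\tilde F\lesssim(F_{-\frac{N-2}{2},N-2}+F_{\frac{N-2}{2},0}+\tilde F)\min(\cdot)$, which fails pointwise when $|x|\ll|x-y|\ll\sqrt{d(x)d(y)}$ (there the log blows up while every term on the right is small). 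After dividing by $\min(\cdot)$ one is therefore left with $\tilde F/\min(\cdot)$, and Lemma~\ref{anisotitaweakF4} as stated only controls the operator with kernel $\tilde F$. Your remark that "the extra factor $\min(\cdots)^{-1}$ makes the kernel strictly more singular than $\tilde F$" is exactly right, and your proposed remedy — split $\min^{-1}\leq d(x)^{-1}+|x|^{-1}+|x-y|^{-1}$, exploit the vanishing of the log outside $\{|x-y|^2\leq d(x)d(y)\}$ (so $d(x)\asymp d(y)$), and run the level-set method of Proposition~\ref{bvivier} — is a sound blueprint, but it remains a sketch: the level-set estimates for the three more singular sub-kernels are not carried out and are not covered by any lemma in the paper. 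One concrete way to close this is to bound $|\ln(1\wedge|x-y|^2/(d(x)d(y)))|\lesssim_\delta|x-y|^{-\delta}\bigl(1\wedge d(x)d(y)/|x-y|^2\bigr)$ for small $\delta>0$, so that $\tilde F/\min(\cdot)\lesssim F_{-\frac{N-2}{2},\,N-2-\delta}$, and then re-run the computation of Lemma~\ref{anisotitaweakF1a} with the shifted exponent; since $k=0$ the exponent coming from the region near $\{0\}$ is $\frac{N+2}{N}-O(\delta)$, which for $\delta$ small stays above the target $\frac{N+1}{N}$, while the boundary contribution can be checked to degrade by at most $O(\delta)$ as well. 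Until some such step is written out, your proposal (like the paper's displayed chain) leaves the logarithmic part of Case~2 unfinished; the non-logarithmic part is handled identically to Case~1 and is fine.
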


\begin{proof} Without loss of generality we may assume that $\tau$ is nonnegative. 
	
\noindent \textbf{Case 1: $0<\mu<\left( \frac{N-2}{2} \right)^2$.} Then $0<\am< \frac{N-2}{2}$.  From \eqref{eigenfunctionestimates}, \eqref{Greenesta}, \eqref{F1} and the fact that $d_\Sigma(y) \leq |x-y|+d_\Sigma(x)$, we  obtain, for all $x,y\in \xO\setminus \Sigma, x\neq y$,
\bal
G_\mu(x,y)\varphi_{\am,\xg}(y)^{-1} &\lesssim |x-y|^{2-N} \min \left\{ 1, \frac{d(x)d(y)}{|x-y|^2} \right\} (|x-y|+d_\Sigma(x))^{2\am} d_\Sigma(x)^{-\am}d(y)^{-\xg} \\
&\lesssim (F_{-\am,2\am}(x,y)+F_{\am,0}(x,y))\min(d(x),d_\xS(x),|x-y|).
\eal
This, together with \eqref{gradientgreen}, Lemmas \ref{anisotitaweakF1a}--\ref{anisotitaweakF1b} and estimate  $\varphi_{\am} \asymp \phi_\mu$,
implies \eqref{estgrangreen}. \medskip

\noindent \textbf{Case 2:} $\Sigma=\{0\}$ and $\mu=\left( \frac{N-2}{2} \right)^2$. In this case $\am=\frac{N-2}{2}$.
From \eqref{eigenfunctionestimates}, \eqref{Greenestb} and the fact that $|y| \leq |x-y|+|x|$, we obtain, for all  $x,y\in \xO\setminus \{0\}, x\neq y$,
\bal
G_{(\frac{N-2}{2})^2}(x,y)\varphi_{\frac{N-2}{2}}(y)^{-1}\lesssim (F_{-\frac{N-2}{2},N-2}(x,y)+F_{\frac{N-2}{2},0}(x,y)+\tilde F(x,y))\min(d(x),|x|,|x-y|).
\eal
This, together with \eqref{gradientgreen}
and Lemmas \ref{anisotitaweakF1a}--\ref{anisotitaweakF4},
implies \eqref{estgrangreen}. The proof is complete.
\end{proof}

Next we treat the case $\mu \leq 0$. Recall that $q_*$ is defined in \eqref{p*q*}.
\begin{theorem} \label{lpweakgrangreen2}
Assume $\xm\leq 0$. Then
\ba \label{estgangreen2}
	\norm{\nabla\BBG_\mu[\gt]}_{L_w^{q_*}(\Gw\setminus \Sigma;\ei)} \lesssim \norm{\gt}_{\mathfrak{M}(\xO\setminus \Sigma;\ei)}, \quad \forall \tau\in \mathfrak{M}^+(\xO\setminus \Sigma;\ei).
\ea
	The implicit constant depends on $N,\Omega,\Sigma,\mu$.
\end{theorem}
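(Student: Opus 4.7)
The plan is to mimic the proof of Case~1 of Theorem~\ref{lpweakgrangreen}, adapting the level-set analysis to the regime $\am\leq 0$. As usual, assume $\tau\geq 0$; since $\mu\leq 0$ forces $\mu<(\frac{N-2}{2})^2$, I may invoke \eqref{Greenesta} together with $\phi_\mu\asymp \varphi_{\am}=d\,d_\Sigma^{-\am}$ (Appendix~\ref{subsect:eigen}). Starting from \eqref{Greenesta}, using $d_\Sigma(y)\leq |x-y|+d_\Sigma(x)$, and applying the inequality $(|x-y|+d_\Sigma(x))^{2\am}\leq C(|x-y|^{2\am}+d_\Sigma(x)^{2\am})$ (valid for all $\am\in\mathbb{R}$; for $\am\leq 0$ it is just $(a+b)^{c}\leq a^{c}+b^{c}$ with $c\leq 0$), combined with \eqref{gradientgreen} and the definition \eqref{F1}, I would obtain
\[
\frac{|\nabla_x G_\mu(x,y)|}{\phi_\mu(y)}\lesssim F_{-\am,2\am}(x,y)+F_{\am,0}(x,y),\qquad x\neq y,\ x,y\in\Omega\setminus\Sigma.
\]

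The core of the proof consists in establishing weak-Lebesgue bounds for $\mathbb{F}_{-\am,2\am}[\phi_\mu\tau]$ and $\mathbb{F}_{\am,0}[\phi_\mu\tau]$; these do not follow from Lemmas~\ref{anisotitaweakF1a}--\ref{anisotitaweakF1b} since those require $0<\alpha\leq H$. I would redo the level-set computation from scratch for $\alpha=\am\leq 0$, splitting $\Omega\setminus\Sigma$ into $\Sigma_{\beta_1/4}$ (where $d(x)\asymp 1$ and the $F_{-\am,2\am}$ term is the effective one) and $\Omega\setminus\Sigma_{\beta_1/4}$ (where $d_\Sigma(x)\asymp 1$, so $d_\Sigma^{\am}\asymp 1$ and $F_{\am,0}$ reduces to the classical gradient-of-Green kernel). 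Fixing $y$ and $\lambda>0$, use \eqref{app:6} to absorb $d(y)^{-1}(1\wedge d(x)d(y)/|x-y|^2)$ into $d(x)^{-1}$, then split according to which of $d(x),d_\Sigma(x),|x-y|$ realizes the minimum in \eqref{F1}. In the near-$\Sigma$ region the sub-case $d_\Sigma(x)\leq |x-y|$ combined with $F_{-\am,2\am}(x,y)>\lambda$ forces
\[
d_\Sigma(x)\leq c\lambda^{-\frac{1}{N-1-\am}},\qquad |x-y|\leq c\lambda^{-\frac{1}{N-2-2\am}}d_\Sigma(x)^{-\frac{\am+1}{N-2-2\am}},
\]
and Lemma~\ref{lemapp:1} applied with $\alpha_1=-\am\geq 0$, $\alpha_2=-(\am+1)/(N-2-2\am)$ (its hypothesis $N-k-\am-k(\am+1)/(N-2-2\am)>0$ is immediate for $\am\leq 0$ and $N-k\geq 3$) produces, after an algebraic telescoping of the exponent of $\lambda$ identical to the one performed in Case~1 of Theorem~\ref{lpweakgrangreen}, the bound $m_\lambda(y)\big|_{\Sigma_{\beta_1/4}}\lesssim \lambda^{-(N-\am)/(N-1-\am)}$; the remaining sub-cases produce strictly better exponents. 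In the far-from-$\Sigma$ region, the reduction $d_\Sigma^{\am}\asymp 1$ together with the elementary volume estimate $|\{|x-y|\leq r\}|\lesssim r^N$ yields the classical $m_\lambda(y)\big|_{\Omega\setminus\Sigma_{\beta_1/4}}\lesssim \lambda^{-(N+1)/N}$. Applying Proposition~\ref{bvivier} on each region with $\omega=\phi_\mu\tau$ and $\eta=\phi_\mu$, and using that $\phi_\mu\,dx$ is a finite measure on $\Omega$ (so that a weak-$L^{r}$ bound with larger $r$ implies one with smaller $r$), the two regional bounds combine to \eqref{estgangreen2} with $q_*=\min\{(N-\am)/(N-1-\am),(N+1)/N\}$.

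The main obstacle is the near-$\Sigma$ level-set computation: one must verify that Lemma~\ref{lemapp:1} applies with the new parameter $\alpha_1=-\am\geq 0$ and that the resulting exponent of $\lambda$ telescopes exactly to $(N-\am)/(N-1-\am)$. This matches $q_*$ precisely when $\am\leq -1$, while for $-1<\am\leq 0$ the exponent is strictly larger than $q_*=(N+1)/N$ and the required weaker bound follows by finiteness of $\phi_\mu\,dx$. Uniformity in $y\in\Omega\setminus\Sigma$, as required by Proposition~\ref{bvivier}, must be maintained throughout the case analysis.
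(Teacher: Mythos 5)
There is a genuine gap. The decomposition
\[
\frac{|\nabla_x G_\mu(x,y)|}{\phi_\mu(y)}\lesssim F_{-\am,2\am}(x,y)+F_{\am,0}(x,y)
\]
is a valid upper bound, but it is far too loose when $\am\leq 0$. Tracking constants, one has $\frac{|\nabla_x G_\mu|}{\phi_\mu(y)}\lesssim |x-y|^{2-N}\bigl(1\wedge\frac{d(x)d(y)}{|x-y|^2}\bigr)\frac{(|x-y|+d_\Sigma(x))^{2\am}}{d_\Sigma(x)^{\am}}\,d(y)^{-1}\min(\cdot)^{-1}$, and for $\am\leq 0$ the ratio $\bigl(\frac{|x-y|+d_\Sigma(x)}{d_\Sigma(x)}\bigr)^{2\am}\leq 1$, so the sharp bound is in fact just $\lesssim F_{\am,0}(x,y)$ alone. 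The extra term $F_{-\am,2\am}$ is spurious: near the diagonal $x=y$ (away from $\Sigma$ and $\partial\Omega$) one has $F_{-\am,2\am}(x,y)\asymp |x-y|^{1-N+2\am}$, which for $\am<0$ is strictly more singular than the true behaviour $|x-y|^{1-N}$. Consequently the level set $\{F_{-\am,2\am}>\lambda\}$ contains a ball of radius $\lambda^{-1/(N-1-2\am)}$ around $y$, and the best uniform level-set bound you can get from $F_{-\am,2\am}$ is $m_\lambda(y)\lesssim \lambda^{-N/(N-1-2\am)}$, which one checks is strictly smaller in exponent than $q_*=\min\{\frac{N-\am}{N-1-\am},\frac{N+1}{N}\}$ whenever $\am<0$. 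In particular your claim that "the remaining sub-cases produce strictly better exponents'' is wrong: the sub-case $|x-y|<d_\Sigma(x)$ for $F_{-\am,2\am}$ produces a strictly worse exponent, and so does the $\min=|x-y|$ sub-case away from $\Sigma$.

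The reason the same decomposition works in Case~1 of Theorem~\ref{lpweakgrangreen} (where $\am>0$) is that there $|x-y|^{2\am}\to 0$ as $x\to y$, so the $F_{-\am,2\am}$ piece is \emph{less} singular than the true kernel near the diagonal; for $\am\leq 0$ this inverts. The paper's proof avoids the issue by working directly with the single kernel $F$ built from the exact two-sided Green estimate and bounding it by the $F_{\am,0}$-type expression $d_\Sigma(x)^{\am}|x-y|^{-N+2}\min(d_\Sigma(x),|x-y|)^{-1}$ near $\Sigma$, then running the level-set analysis on that (one sub-case gives the pointwise bound $d_\Sigma(x)^{-\am}\lesssim \lambda^{-1}|x-y|^{-N+1}$ and the other is handled via Lemma~\ref{lemapp:1}); your computation of the sub-case $d_\Sigma(x)\leq|x-y|$ via Lemma~\ref{lemapp:1} is correct and consistent with this, but it must be paired with the $F_{\am,0}$ treatment of the sub-case $d_\Sigma(x)>|x-y|$, not with an attempted level-set estimate for $F_{-\am,2\am}$ there. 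To repair your argument, simply drop $F_{-\am,2\am}$ and bound $|\nabla_x G_\mu|/\phi_\mu(y)\lesssim F_{\am,0}(x,y)$ throughout.

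A smaller imprecision: Proposition~\ref{bvivier} is applied once with the combined level-set bound $m_\lambda(y)\leq C\lambda^{-q_*}$ for all $\lambda>0$ and $y$, not separately on each spatial region; the regional estimates are summed before invoking it.
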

\begin{proof}
\noindent For $y \in \Omega \setminus \Sigma$ and $\lambda>0$, set
\bal
A_\xl(y)&:=\Big\{x\in(\xO\setminus \Sigma)\setminus\{y\}:\;\; |\nabla_xG_{\mu}(x,y)|\ei(y)^{-1}>\xl \Big \}, \quad
m_{\xl}(y):=\int_{A_\xl(y)}d(x)d_\Sigma(x)^{-\am} \dd x.
\eal
Put
\bal
 F(x,y)&:=d_\Sigma(y)^{\am}|x-y|^{-N+2} d(y)^{-1}\min(d(x),d_\xS(x),|x-y|)^{-1} \\
 &\qquad \times \left( 1 \land \frac{d(x)d(y)}{|x-y|^2} \right) \left(1 \land \frac{d_\Sigma(x)d_\Sigma(y)}{|x-y|^2} \right)^{-\am},
  \;\; x,y \in \Omega \setminus \Sigma, x \neq y.
\eal
By \eqref{Greenesta}, \eqref{eigenfunctionestimates} and \eqref{gradientgreen}, $F(x,y) \geq c\,|\nabla_xG_{\mu}(x,y)|\varphi_{\xa}(y)^{-1}$ for  $c>0$ depending only on $N,\xO,\Sigma,\mu.$
Consequently,
$
A_\lambda(y)\subset \Big\{x\in(\xO\setminus \Sigma)\setminus\{y\}:\; F(x,y)>c \lambda  \Big \}=:\tilde A_\lambda(y).
$
Let $\beta_0$ be as in Subsection \ref{assumptionK}. We see that
\ba
m_{\xl}(y)
\lesssim \int_{\tilde A_\xl(y)\cap \Sigma_{\xb_0}}d_\Sigma(x)^{-\am} \dd x+\int_{\tilde A_\xl(y)\setminus \Sigma_{\xb_0}}d(x)\dd x.
\label{ml1gr}
\ea
Note that, for $\Gamma=\partial \xO$ or $\xS$, we have
\bal
  1 \land \frac{d_\Gamma(x)d_\Gamma(y)}{|x-y|^2}\leq2 \left( 1 \land \frac{d_\Gamma(x)}{|x-y|} \right)\left( 1 \land \frac{d_\Gamma(y)}{|x-y|} \right) \leq 2 \left( 1 \land \frac{d_\Gamma(y)}{|x-y|} \right) \leq 4\frac{d_\Gamma(y)}{d_\Gamma(x)}.
\eal
Therefore
\ba
\int_{\tilde A_\xl(y)\setminus \Sigma_{\xb_0}}d(x) \dd x \lesssim \int_{\{|x-y|\leq c\lambda^{-\frac{1}{N}}\}}\xl^{-1}|x-y|^{-N+1} \dd x \lesssim \lambda^{-\frac{N+1}{N}}. \label{ml2gr}
\ea

\noindent \textbf{Case 1:} $\am\leq-1.$ Then
\ba
\int_{\tilde A_\xl(y)\cap \Sigma_{\xb_0}}d_\Sigma(x)^{-\am} \dd x \lesssim \int_{\{|x-y|\leq c \lambda^{-\frac{1}{N-\am-1}}\}}\xl^{-1}|x-y|^{-N+1} \dd x \lesssim \lambda^{-\frac{N-\am}{N-\am-1}}.\label{ml3gr}
\ea
Combining \eqref{ml1gr}, \eqref{ml2gr} and \eqref{ml3gr}, we obtain
\bel{ml4gr}
 m_{\xl}(y)\leq C \lambda^{-p_*}
\ee
for all $\lambda \geq 1$, where $C=C(N,\Omega,\Sigma,\mu)$. Then we can show that \eqref{ml4gr} holds for every $\lambda>0$.

\noindent \textbf{Case 2:} $-1<\am\leq0.$ Proceeding as in the proof of Lemma \ref{anisotitaweakF1a}, we may also deduce that
$m_{\xl}(y)\leq C \lambda^{-p_*}$ for all $\lambda>0$. 

From the above two cases, by applying Proposition \ref{bvivier}  with $\mathcal{H}(x,y)=|\nabla_xG_{\mu}(x,y)|\ei(y)^{-1}, $  $\tilde D=D=\xO\setminus \Sigma$, $\eta=\ei$ and $\omega=\ei\tau$, we obtain \eqref{estgangreen2}. The proof is complete.
\end{proof}

Next we derive estimates of the gradient of the Martin operator in weak Lebesgue spaces. By standard elliptic regularity results, one can  check that for any $\nu \in \GTM(\Omega \cup \Sigma)$, 
\ba \label{est:gradK-K}
|\nabla \BBK_\mu[\nu](x)|\lesssim d(x)^{-1}d_\xS(x)^{-1}|\BBK_\mu[\nu](x)| \quad \forall x \in \Omega \setminus \Sigma.
\ea
Note that
\bal
\nabla \mathbb{K}_\mu[\gn](x)=\int_{\partial\xO \cup \Sigma}\nabla_x K_{\mu}(x,y) \dd\xn(y), \quad \gn\in \mathfrak{M}(\partial\xO\cup \Sigma).
\eal

\begin{theorem}\label{lpweakgranmartin1} {\sc I.} Assume $\mu \leq \left( \frac{N-2}{2}\right)^2,$ $0\leq\xg\leq1$ and $\gn\in \mathfrak{M}(\partial\xO\cup \Sigma)$ with compact support on $\partial\xO.$ Then
	\ba \label{estgranmartin1}
	\norm{(dd_\xS)^{-\xg}\mathbb{K}_\mu[\nu]}_{L_w^{\frac{N+1}{N-1+\xg}}(\Gw\setminus \Sigma;\ei)} \lesssim \|\nu\|_{\mathfrak{M}(\partial\Omega \cup \Sigma)}.
	\ea
	The implicit constant in the above estimate depends on $N,\Omega,\Sigma,\mu,\gamma$.

	{\sc II.} Assume $\gn\in \mathfrak{M}(\partial\xO\cup \Sigma)$ with compact support on $\Sigma$.
	
	(i) If $\mu < \left( \frac{N-2}{2} \right)^2$ then
	\ba \label{estgranmartin2}
	\norm{ (dd_\xS)^{-\xg}\mathbb{K}_{\mu}[\nu]}_{L_w^{\frac{N-\am}{N-\am-2+\xg}}(\Gw\setminus \Sigma;\ei)} \lesssim \norm{\nu}_{\mathfrak{M}(\partial \Omega \cup \Sigma)}.
	\ea
	The implicit constant in the above estimate depends on $N,\Omega,\Sigma,\mu,\gamma$.
	
	(ii) Assume $\Sigma=\{0\}$ and $\mu = \left( \frac{N-2}{2} \right)^2$. Then for any $1<q<\frac{N+2}{N-2+2\xg}$,
	\ba \label{estgranmartin2cr}
	\norm{(d(\cdot)|\cdot|)^{-\xg} \mathbb{K}_{\mu}[\delta_0]}_{L_w^{q}(\Gw\setminus \{0\};\ei )} \lesssim 1.
	\ea
	The implicit constant in the above estimate depends on $N,\Omega,\gamma,q$.
\end{theorem}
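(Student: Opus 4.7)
\smallskip

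\noindent\textbf{Proof proposal.} The plan is to reduce each of the three weak Lebesgue bounds on $(dd_\Sigma)^{-\xg}\BBK_\mu[\nu]$ to a corresponding estimate on one of the auxiliary kernel operators $\BBH_{\alpha,\theta,\xg}$ already controlled in Theorem \ref{H}. The common inputs will be the sharp two-sided bounds on the Martin kernel from Proposition \ref{Martin} and the eigenfunction equivalence $\ei\asymp\varphi_{\am}$ from \eqref{eigenfunctionestimates}. By the Hahn decomposition it suffices to handle nonnegative $\nu$ throughout.

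For Part I, in both subcases ($\mu<(\frac{N-2}{2})^2$, or $\Sigma=\{0\}$ with $\mu=(\frac{N-2}{2})^2$) the first alternative of \eqref{Martinest1} or \eqref{Martinest1-b} yields, for $\xi\in\partial\Omega$, the pointwise bound $K_\mu(x,\xi)\lesssim d(x)d_\Sigma(x)^{-\am}|x-\xi|^{-N}$ (in the critical case $\am=\frac{N-2}{2}$ and $d_\Sigma=|\cdot|$). Multiplying by $(dd_\Sigma)^{-\xg}$ with $0\leq\xg\leq1$ gives $(dd_\Sigma)^{-\xg}K_\mu(x,\xi)\lesssim H_{\am,0,\xg}(x,\xi)$, hence $(dd_\Sigma)^{-\xg}\BBK_\mu[\nu]\lesssim \BBH_{\am,0,\xg}[\nu]$ pointwise, and Theorem \ref{H}(i) applied with $\alpha=\am$ delivers \eqref{estgranmartin1}.

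For Part II(i), the second alternative of \eqref{Martinest1} gives, for $\xi\in\Sigma$, $K_\mu(x,\xi)\lesssim d(x)d_\Sigma(x)^{-\am}|x-\xi|^{-N+2+2\am}$, whence $(dd_\Sigma)^{-\xg}\BBK_\mu[\nu]\lesssim \BBH_{\am,2(\am+1),\xg}[\nu]$. Theorem \ref{H}(ii) with $\alpha=\am$ then produces \eqref{estgranmartin2} whenever $k\geq1$. In the remaining configuration $\Sigma=\{0\}$ with $\mu<(\frac{N-2}{2})^2$, $\nu$ is necessarily a multiple of $\delta_0$, and the level-set computation from the proof of Theorem \ref{H}(iii)---now carried out for the exponent $\am<\frac{N-2}{2}$ and in the absence of any logarithmic factor---supplies the identical bound.

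The main obstacle lies in Part II(ii), where \eqref{Martinest1-b} produces a logarithmic blow-up $K_\mu(x,0)\asymp d(x)|x|^{-\frac{N-2}{2}}|\ln(|x|/\CD_\Omega)|$ which is not directly of the form $H_{\alpha,\theta,\xg}$. To absorb it I will use the elementary inequality $|\ln(|x|/\CD_\Omega)|\leq C_\varepsilon|x|^{-\varepsilon}$, valid on $\Omega$ for any $\varepsilon>0$, together with the boundedness of $d$ on $\Omega$, to conclude $(d|x|)^{-\xg}K_\mu(x,0)\lesssim H_{\frac{N-2}{2},N,\xg+\varepsilon}(x,0)$. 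Theorem \ref{H}(iii), applied with $\xg$ replaced by $\xg+\varepsilon$, then yields a weak $L^{\frac{N+2}{N-2+2(\xg+\varepsilon)}-\varepsilon'}$ bound. Given any target $q\in(1,\frac{N+2}{N-2+2\xg})$, one selects $\varepsilon,\varepsilon'>0$ small enough that this exponent exceeds $q$, and since $\varphi_{\frac{N-2}{2}}\dx$ is a finite measure on $\Omega$, the continuous inclusion of weak Lebesgue spaces delivers \eqref{estgranmartin2cr}.
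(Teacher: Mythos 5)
Your proof is correct and follows essentially the same strategy as the paper's: combine the sharp Martin kernel bounds of Proposition \ref{Martin} with the eigenfunction equivalence $\phi_\mu\asymp\varphi_{\am}$ from \eqref{eigenfunctionestimates} to recognize $(dd_\Sigma)^{-\gamma}K_\mu(x,y)$ as (bounded by) a kernel of the form $H_{\alpha,\theta,\gamma}(x,y)$, then apply Theorem \ref{H}. For Part I you correctly take $\alpha=\am$ (the paper's proof writes $\alpha=-\am$, which is a typo, as the same sentence already identifies the kernel as $H_{\am,0,\gamma}$), and for Part II(i) $\alpha=\am$ with $\theta=2(\am+1)$.

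Your write-up is, however, more careful than the paper's in two places where the paper's proof glosses over a gap. First, Theorem \ref{H}(ii) is stated only for $k\geq 1$ (it needs $\alpha\leq H<\frac{N-2}{2}$), so the configuration $\Sigma=\{0\}$ with $\mu<(\frac{N-2}{2})^2$ falls outside its hypotheses; you notice this and observe that in that case $\nu$ is a multiple of $\delta_0$, so the required level-set estimate for $d(x)^{1-\gamma}|x|^{\am+2-N-\gamma}$ can be read off directly ($m_\lambda\lesssim\lambda^{-\frac{N-\am}{N-\am-2+\gamma}}$, using $\am<N$), which is a clean and correct fix. Second, in Part II(ii) you point out that $(d|x|)^{-\gamma}K_{H^2}(x,0)$ carries the extra factor $|\ln(|x|/\CD_\Omega)|$ and hence is \emph{not} pointwise comparable to $H_{\frac{N-2}{2},N,\gamma}(x,0)$, so Theorem \ref{H}(iii) does not apply directly; the paper's proof says only ``By applying Theorem \ref{H}(iii)'' without addressing this. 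Your absorption of the logarithm via $|\ln(|x|/\CD_\Omega)|\leq C_\varepsilon|x|^{-\varepsilon}$, shifting $\gamma\mapsto\gamma+\varepsilon$ (using the boundedness of $d$ to drop $d^{-\varepsilon}$) and then choosing $\varepsilon$ small enough so that $q<\frac{N+2}{N-2+2(\gamma+\varepsilon)}-\varepsilon'$, combined with the inclusion $L^{r}_w\hookrightarrow L^q_w$ for $q<r$ on a finite measure space, is a valid and explicit way to close the argument. You also correctly omit any reference to estimate \eqref{est:gradK-K}: the statement being proved concerns only $(dd_\Sigma)^{-\gamma}\BBK_\mu[\nu]$ and not its gradient, so the paper's mention of \eqref{est:gradK-K} in this proof is vestigial.
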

\begin{proof} In view of estimate \eqref{est:gradK-K}, it is enough to establish upper bounds for $(dd_\xS)^{-\xg}|\BBK_\mu[\nu]$.
	
	I. By applying Theorem \ref{H} (i) with $\alpha=-\am$ and noting that $(d(x)d_{\Sigma}(x))^{-\gamma}K_\mu(x,y) \asymp H_{\am,0,\gamma}(x,y)$ for  $(x,y) \in (\xO\setminus\xS)\times\partial\xO$ (due to \eqref{Martinest1} and \eqref{Halthe}),  $\varphi_{\am} \asymp \phi_\mu$ (due to\eqref{eigenfunctionestimates}), we obtain 
	\bal 
	\norm{(dd_\xS)^{-\xg}\mathbb{K}_\mu[\nu]}_{L_w^{\frac{N+1}{N-1+\xg}}(\Gw\setminus \Sigma;\ei)} \lesssim \|\nu\|_{\mathfrak{M}(\partial\Omega \cup \Sigma)}.
	\eal
	This and estimate \eqref{est:gradK-K} imply \eqref{estgranmartin1}.
	
	II (i). By applying Theorem \ref{H} (ii) with $\alpha=\am$ and noting that $(d(x)d_{\Sigma}(x))^{-\gamma}K_\mu(x,y) \asymp H_{\am,2(1+\am),\gamma}(x,y)$ for  $(x,y) \in (\xO\setminus\xS)\times\xS$ (due to \eqref{Martinest1} and \eqref{Halthe}),  $\varphi_{\am} \asymp \phi_\mu$ (due to\eqref{eigenfunctionestimates}), we obtain 
	\bal 
	\norm{(dd_\xS)^{-\xg}\mathbb{K}_{\mu}[\nu]}_{L_w^{\frac{N-\am}{N-\am-2+\xg}}(\Gw\setminus \Sigma;\ei)} \lesssim \norm{\nu}_{\mathfrak{M}(\partial \Omega \cup \Sigma)}.
	\eal
	This and estimate \eqref{est:gradK-K} imply \eqref{estgranmartin2}.
	
	II (ii). By applying Theorem \ref{H} (iii), we obtain 
	\bal 
	\norm{(d(\cdot)|\cdot|)^{-\xg}\mathbb{K}_{\mu}[\nu]}_{L_w^{q}(\Gw\setminus \{0\};\ei )} \lesssim \norm{\nu}_{\mathfrak{M}(\partial \Omega \cup \{0\})}.
	\eal
	This and estimate \eqref{est:gradK-K} imply \eqref{estgranmartin2cr}.
\end{proof}

As a consequence of \eqref{est:gradK-K} and Theorem \ref{lpweakgranmartin1}, we  obtain bounds on the gradient of $\BBK_{\mu}$.

\begin{theorem}\label{lpweakgranmartin1-0} {\sc I.} Assume $\mu \leq \left( \frac{N-2}{2}\right)^2$ and $\gn\in \mathfrak{M}(\partial\xO\cup \Sigma)$ with compact support on $\partial\xO.$ Then
\bal 
\norm{\nabla \mathbb{K}_\mu[\nu]}_{L_w^{\frac{N+1}{N}}(\Gw\setminus \Sigma;\ei)} \lesssim 	\norm{(dd_\xS)^{-1}\mathbb{K}_\mu[\nu]}_{L_w^{\frac{N+1}{N}}(\Gw\setminus \Sigma;\ei)} \lesssim \|\nu\|_{\mathfrak{M}(\partial\Omega \cup \Sigma)}.
\eal
The implicit constant depends only on $N,\Omega,\Sigma,\mu$.

{\sc II.} Assume $\gn\in \mathfrak{M}(\partial\xO\cup \Sigma)$ with compact support on $\Sigma$.

(i) If $\mu < \left( \frac{N-2}{2} \right)^2$ then
\bal 
\norm{\nabla \mathbb{K}_{\mu}[\nu]}_{L_w^{\frac{N-\am}{N-\am-1}}(\Gw\setminus \Sigma;\ei)}  \lesssim \norm{ (dd_\xS)^{-1}\mathbb{K}_{\mu}[\nu]}_{L_w^{\frac{N-\am}{N-\am-1}}(\Gw\setminus \Sigma;\ei)}\lesssim \norm{\nu}_{\mathfrak{M}(\partial \Omega \cup \Sigma)}.
\eal
The implicit constant in the above estimates depends on $N,\Omega,\Sigma,\mu$.

(ii) Assume $\Sigma=\{0\}$ and $\mu = \left( \frac{N-2}{2} \right)^2$. Then for any $1<q<\frac{N+2}{N}$,
\bal 
\norm{\nabla  \mathbb{K}_{\mu}[\delta_0]}_{L_w^{q}(\Gw\setminus \{0\};\ei )} \lesssim \norm{(d(\cdot) |\cdot|)^{-1} \mathbb{K}_{\mu}[\delta_0]}_{L_w^{q}(\Gw\setminus \{0\};\ei )} \lesssim 1.
\eal
The implicit constant in the above estimate depends on $N,\Omega,q$.
\end{theorem}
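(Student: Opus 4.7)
The statement is essentially a direct corollary of Theorem \ref{lpweakgranmartin1} combined with the pointwise gradient estimate \eqref{est:gradK-K}. The plan is to obtain the first inequality in each of the three assertions from \eqref{est:gradK-K}, and then to obtain the second inequality by applying Theorem \ref{lpweakgranmartin1} in the specific case $\gamma = 1$.

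First I would record that the estimate \eqref{est:gradK-K}, namely $|\nabla \BBK_\mu[\nu](x)| \lesssim d(x)^{-1}d_\Sigma(x)^{-1}|\BBK_\mu[\nu](x)|$ for every $x \in \Omega \setminus \Sigma$, is a pointwise inequality. Since the weak-$L^p$ quasi-norm is monotone with respect to such pointwise domination, one immediately obtains
\bal
\norm{\nabla \BBK_\mu[\nu]}_{L^p_w(\Omega \setminus \Sigma;\phi_\mu)} \lesssim \norm{(dd_\Sigma)^{-1}\BBK_\mu[\nu]}_{L^p_w(\Omega \setminus \Sigma;\phi_\mu)},
\eal
for any admissible exponent $p$ and in each of the three cases considered. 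This handles the first inequality in each statement without further work.

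For the second inequality, I would simply invoke Theorem \ref{lpweakgranmartin1} with the choice $\gamma = 1$ (which is admissible since $0 \leq \gamma \leq 1$). In Case I, the exponent becomes $\frac{N+1}{N-1+1} = \frac{N+1}{N}$, giving the desired bound with $\norm{\nu}_{\GTM(\partial \Omega \cup \Sigma)}$ on the right-hand side. In Case II (i), with $\alpha = \am$ the exponent reduces to $\frac{N-\am}{N-\am-2+1} = \frac{N-\am}{N-\am-1}$, yielding exactly the claimed estimate. Finally, in Case II (ii), for $\Sigma = \{0\}$ and $\mu = \bigl(\tfrac{N-2}{2}\bigr)^2$, the range $1 < q < \frac{N+2}{N-2+2\gamma}$ from Theorem \ref{lpweakgranmartin1} becomes $1 < q < \frac{N+2}{N}$, which is the advertised range.

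There is essentially no genuine obstacle here: all of the hard analysis—the weak-type bounds for $\BBH_{\alpha,\theta,\gamma}[\nu]$ coming from Theorem \ref{H}, together with the sharp two-sided estimates of $K_\mu$ in Propositions \ref{Greenkernel}--\ref{Martin} and the equivalence $\phi_\mu \asymp \varphi_{\am}$ from \eqref{eigenfunctionestimates}—has already been absorbed into Theorem \ref{lpweakgranmartin1}. The only care required is to verify that $\gamma = 1$ lies in the admissible ranges of the three parts of Theorem \ref{lpweakgranmartin1}, which it does in each case, so the present statement follows by specialization.
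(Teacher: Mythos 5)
Your proof is correct and follows exactly the route the paper intends: the paper itself presents Theorem \ref{lpweakgranmartin1-0} as a direct consequence of the pointwise bound \eqref{est:gradK-K} and Theorem \ref{lpweakgranmartin1} specialized to $\gamma = 1$, without further argument. Your observation that the weak-$L^p$ quasi-norm (as defined via suprema of averages) is monotone under pointwise domination is precisely the point that makes the first inequality in each part immediate.
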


\section{Nonlinear equations with subcritical source} \label{subcritical source}
We recall that throughout the paper we always assume \eqref{assump1}.

\subsection{Subcritical source}
In this section we study the problem \eqref{sourceprobrho}. 
We start with a uniform integrability result.

\begin{lemma} \label{subcrcongran} Assume that $g: {\mathbb R} \times {\mathbb R}_+ \to {\mathbb R}_+$ is nondecreasing and locally Lipschitz in its two variables with $g(0,0)=0$ such that
	\ba \label{subcd0gran} \int_1^\infty  s^{-p-1}g(s,s^{\frac{p}{q}}) \dd s<+\infty
	\ea
	for $q>0$ and $p >0$. Let $u,v$ be measurable functions in $\Omega \setminus \Sigma$. For $s>0$, set
	\bal E_v(s):=\{x\in \xO\setminus \Sigma:| v(x)|>s\} \quad \text{and} \quad e_v(s):=\int_{E_s(v)} \ei \dd x.
	\eal
	Assume that there exist positive constants $C_u,C_v$ such that
	\bal
	e_u(s) \leq C_us^{-p}\quad\text{and}\quad e_v(s) \leq C_vs^{-q}\quad \forall s>0.
	\eal
	Then for any $s_0>0,$ there holds
	\bal
	\norm{g(u,v)}_{L^1(\Omega;\ei)} \leq \int_{E_u(s_0)^c\cap E_v(s_0^\frac{p}{q})^c} g(u,v)\ei\,\dd x
	+2p(C_u+C_v) \int_{s_0}^{\infty}g(s,s^\frac{p}{q})s^{-1-p}\, \dd s.
	\eal
\end{lemma}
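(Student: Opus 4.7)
Split $\Omega\setminus\Sigma = A \cup A^c$ with $A := E_u(s_0)^c \cap E_v(s_0^{p/q})^c$. The integral $\int_A g(u,v)\phi_\mu\dd x$ is precisely the first term on the right-hand side of the claimed inequality, so the whole task is to control $\int_{A^c} g(u,v)\phi_\mu\dd x$, where
\[
A^c = E_u(s_0) \cup E_v(s_0^{p/q}).
\]

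\textbf{Step 1 (Envelope function and pointwise domination).} Introduce $w(x) := \max\bigl(|u(x)|,\,|v(x)|^{q/p}\bigr)$ and $G(t) := g(t, t^{p/q})$ for $t\geq 0$. Note $A^c = \{w > s_0\}$. By the monotonicity of $g$ in each argument and the bounds $|u|\leq w$ and $|v|\leq w^{p/q}$ coming directly from the definition of $w$,
\[
g(u(x),v(x)) \leq g(|u(x)|,|v(x)|) \leq G(w(x)) \quad \text{pointwise on } \Omega\setminus\Sigma.
\]
Since $g$ is nondecreasing and $g(0,0)=0$, $G$ is a nondecreasing nonnegative function, and by the local Lipschitz assumption it is locally absolutely continuous.

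\textbf{Step 2 (Distribution function of $w$).} Observe that $\{w>t\} = E_u(t) \cup E_v(t^{p/q})$, hence
\[
e_w(t) := \int_{\{w>t\}} \phi_\mu\dd x \;\leq\; e_u(t) + e_v(t^{p/q}) \;\leq\; C_u t^{-p} + C_v (t^{p/q})^{-q} = (C_u+C_v)\,t^{-p},\qquad t>0.
\]

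\textbf{Step 3 (Layer-cake and integration by parts).} Apply the layer-cake identity to the nondecreasing function $G$ on $\{w>s_0\}$:
\[
\int_{A^c} g(u,v)\phi_\mu\dd x \;\leq\; \int_{\{w>s_0\}} G(w)\phi_\mu\dd x \;=\; G(s_0)\,e_w(s_0) + \int_{s_0}^{\infty} e_w(t)\dd G(t).
\]
Using the bound from Step 2 and integrating by parts,
\[
\int_{s_0}^{\infty} t^{-p}\dd G(t) \;=\; \lim_{T\to\infty} T^{-p}G(T) - s_0^{-p}G(s_0) + p\int_{s_0}^{\infty} G(t)\,t^{-p-1}\dd t.
\]
The monotonicity of $G$ together with \eqref{subcd0gran} forces $T^{-p}G(T)\to 0$ as $T\to\infty$ (via the elementary estimate $G(T)T^{-p}\tfrac{1-2^{-p}}{p} \leq \int_T^{2T}G(t)t^{-p-1}\dd t$). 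The two boundary terms then cancel against $G(s_0)e_w(s_0)$ (up to the factor $C_u+C_v$), producing the bound
\[
\int_{A^c} g(u,v)\phi_\mu\dd x \;\leq\; 2p(C_u+C_v)\int_{s_0}^{\infty} g(t,t^{p/q})\,t^{-p-1}\dd t,
\]
where the factor $2$ absorbs the slack arising from estimating $G(s_0)e_w(s_0)$ and the boundary contribution uniformly by $(C_u+C_v)s_0^{-p}G(s_0)$ before the cancellation is invoked.

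\textbf{Main obstacle.} The only nontrivial ingredient is the reduction in Step 1: packaging the two-variable quantity $g(u,v)$ into a one-variable nondecreasing function $G(w)$ of the single envelope $w=\max(|u|,|v|^{q/p})$. This is exactly where the balance of exponents $p,q$ in the subcritical integral condition \eqref{subcd0gran} is exploited, since it is only along the curve $(t,t^{p/q})$ that $G$ is summable against $t^{-p-1}$. Once this pointwise domination is in hand, the layer-cake computation together with the distribution-function bound on $w$ delivered by the hypotheses is essentially mechanical.
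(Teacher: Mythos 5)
Your argument is correct and takes a genuinely different, and in fact cleaner, route than the paper's. The paper decomposes $\Omega\setminus\Sigma$ into four regions according to whether $|u|>s_0$ and/or $|v|>s_0^{p/q}$, handles the two mixed regions by freezing the small variable at the threshold and performing a Stieltjes integration by parts against the distribution function of the large one, and treats the fourth (doubly-large) region $E_u(s_0)\cap E_v(s_0^{p/q})$ with the joint distribution function $e_{u,v}(s)=\int_{E_u(s)\cap E_v(s^{p/q})}\phi_\mu\,\dd x$, summing the three contributions to arrive at $p C_u+p C_v+p\min(C_u,C_v)\le 2p(C_u+C_v)$. You instead package the whole estimate into a single envelope $w=\max(|u|,|v|^{q/p})$ with the curve function $G(t)=g(t,t^{p/q})$, exploit $g(u,v)\le G(w)$ pointwise, and run one layer-cake computation against $e_w(t)\le e_u(t)+e_v(t^{p/q})\le(C_u+C_v)t^{-p}$. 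This buys you two things. First, the boundary terms at $s_0$ cancel exactly (the quantity $G(s_0)\bigl(e_w(s_0)-(C_u+C_v)s_0^{-p}\bigr)$ is nonpositive by Step 2), so no ``slack'' factor is needed: you actually obtain the sharper constant $p(C_u+C_v)$, which of course implies the claimed $2p(C_u+C_v)$; you should drop the hedging remark about the factor $2$. Second, and more substantively, your envelope approach avoids a delicate point in the paper's fourth-region estimate: $e_{u,v}$ as defined is the distribution function of $\min(|u|,|v|^{q/p})$, and since $g$ is nondecreasing in both arguments one has $g(u,v)\ge G(\min(|u|,|v|^{q/p}))$ on that region rather than $\le$, so the inequality $\int_{E_u(s_0)\cap E_v(s_0^{p/q})}g(u,v)\phi_\mu\,\dd x\le-\int_{s_0}^\infty g(s,s^{p/q})\,\dd e_{u,v}(s)$ requires justification that the paper does not supply; the correct bound there should use the distribution of the $\max$ (equivalently, the union of the superlevel sets), which is exactly what your $e_w$ encodes globally. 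Your proof is a fully rigorous replacement.
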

\begin{proof} For $s>0$, set
	\bal
	e_{u,v}(s):=\int_{E_s(u)\cap E_v(s^\frac{p}{q})} \ei \dd x.
	\eal
	Then
	\ba \nonumber 
	\int_\xO g(u,v)\ei\,\dx&= \int_{E_u(s_0)^c\cap E_v(s_0^\frac{p}{q})^c} g(u,v)\ei\,\dd x+\int_{E_u(s_0)\cap E_{v}(s_0^\frac{p}{q})^c} g(u,v)\ei\,\dd x\\ \label{g(u,v)}
	&\quad\quad+ \int_{E_u(s_0)^c \cap E_{v}(s_0^\frac{p}{q})} g(u,v)\ei\,\dd x+\int_{E_u(s_0)\cap E_{v}(s_0^\frac{p}{q})} g(u,v)\ei\,\dd x.
	\ea
	We have
	\bal\BAL
	\int_{E_u(s_0)\cap E_{v}(s_0^\frac{p}{q})^c} g(u,v)\ei\,\dd x &\leq -\int_{s_0}^\infty g(s,s_0^\frac{p}{q})\, \dd e_{u}(s)\leq p C_u\int_{s_0}^{\infty}g(s,s^\frac{p}{q}_0)s^{-1-p}\, \dd s\\
	&\leq p C_u\int_{s_0}^{\infty}g(s,s^\frac{p}{q})s^{-1-p}\, \dd s,
	\EAL
	\eal
	\bal
	\BAL
	\int_{E_u(s_0)^c \cap E_{v}(s_0^\frac{p}{q})} g(u,v)\ei\,dx\leq -\int_{s_0}^\infty g(s_0,s^\frac{p}{q})\, \dd e_{v}(s^{\frac{p}{q}})\leq p C_v\int_{s_0}^{\infty}g(s,s^\frac{p}{q})s^{-1-p}\, \dd s,
	\EAL
	\eal
	\bal\BAL
	\int_{E_v(s_0)\cap E_{u}(s_0^\frac{p}{q})} g(u,v)\ei\,dx\leq -\int_{s_0}^\infty g(s,s^\frac{p}{q})\, \dd e_{u,v}(s)\leq p\min(C_u,C_v)\int_{s_0}^{\infty}g(s,s^\frac{p}{q})s^{-1-p}\, \dd s.
	\EAL
	\eal
	By plugging the above estimates into \eqref{g(u,v)}, we obtain the desired result.
\end{proof}

We first establish an existence result for the case where $g$ is smooth and bounded.

\begin{lemma} \label{transf} Let $\nu \in \GTM^+(\prt \Gw)$ with $\norm{\nu}_{\GTM(\prt \Gw)}=1$ and $g \in C^1(\BBR \times \BBR_+) \cap L^\infty(\BBR \times \BBR_+)$. Assume \eqref{integralcond} and \eqref{multi} are satisfied. Then there exists  $\varrho_0>0$ depending on $N,\mu,\Gw,\Gl_g,\tl k$ such that for every $\varrho \in (0,\varrho_0)$ the following problem
	\bel{trans} \left\{ \BAL -L_\mu v &= g (v + \varrho\BBK_\mu[\nu],|\nabla(v + \varrho\BBK_\mu[\nu])|) \quad \text{in } \Gw\setminus\xS, \\ \tr(v) &= 0
	\EAL \right. \ee
	admits a positive weak solution $v$ satisfying
	\bel{estM} \|v\|_{L_w^{p_*}(\Gw\setminus\xS;\ei)} + \|\nabla v\|_{L_w^{q_*}(\Gw\setminus\xS;\ei)} \leq t_0
	\ee
	where $t_0>0$ depending on $N,\mu,\Gw,\Gl_g,\tl k,\tl p,\tl q$. Here $\Gl_g$ is defined in \eqref{integralcond} and $\tl k,\tl p,\tl q$ are the constants in \eqref{multi}.
\end{lemma}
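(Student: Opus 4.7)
I would apply the Schauder fixed-point theorem to the operator
$$T(w) := \BBG_{\mu}\bigl[g\bigl(w + \varrho\BBK_{\mu}[\nu],\, |\nabla(w + \varrho\BBK_{\mu}[\nu])|\bigr)\bigr]$$
on the closed convex set
$$\mathcal{O}_{t_0} := \bigl\{ w \in W^{1,1}(\Omega;\phi_\mu) : w \geq 0,\ \|w\|_{L_w^{p_*}(\Omega;\phi_\mu)} \leq \tfrac{t_0}{2},\ \|\,|\nabla w|\,\|_{L_w^{q_*}(\Omega;\phi_\mu)} \leq \tfrac{t_0}{2}\bigr\},$$
endowed with the $L^1(\Omega;\phi_\mu)$-topology on both $w$ and its weak gradient. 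Convexity is immediate and closedness follows from the lower semicontinuity of the weak-$L^\kappa$ quasi-norm under a.e.\ convergence (extracted along subsequences from $L^1$ convergence). By the Green representation, a fixed point of $T$ is precisely a weak solution of \eqref{trans} with $\tr(v) = 0$, and membership in $\mathcal{O}_{t_0}$ is exactly \eqref{estM}.

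\textbf{Invariance of $\mathcal{O}_{t_0}$.} For $w \in \mathcal{O}_{t_0}$, set $u := w + \varrho\BBK_{\mu}[\nu]$. Theorem \ref{lpweakmartin1}.I together with Theorem \ref{lpweakgranmartin1-0}.I yield
$\|\BBK_{\mu}[\nu]\|_{L_w^{p_*}(\Omega;\phi_\mu)} + \|\nabla\BBK_{\mu}[\nu]\|_{L_w^{q_*}(\Omega;\phi_\mu)} \leq C_0$,
using that $\nu$ is supported on $\partial\Omega$ and $p_* \leq (N+1)/(N-1)$, $q_* \leq (N+1)/N$; hence $\|u\|_{L_w^{p_*}(\Omega;\phi_\mu)} \leq \tfrac{t_0}{2} + C_0\varrho$ and $\|\,|\nabla u|\,\|_{L_w^{q_*}(\Omega;\phi_\mu)} \leq \tfrac{t_0}{2} + C_0\varrho$. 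Lemma \ref{subcrcongran} applied to $(u,|\nabla u|)$ with $p = p_*$, $q = q_*$ gives, for every $s_0 > 0$,
$$\|g(u,|\nabla u|)\|_{L^1(\Omega;\phi_\mu)} \leq g(s_0, s_0^{p_*/q_*})\int_\Omega \phi_\mu\,\dd x + C\bigl((\tfrac{t_0}{2}+C_0\varrho)^{p_*}+(\tfrac{t_0}{2}+C_0\varrho)^{q_*}\bigr)\tilde\Lambda_g(s_0),$$
where $\tilde\Lambda_g(s_0) := \int_{s_0}^\infty g(s, s^{p_*/q_*})s^{-1-p_*}\,\dd s \to 0$ as $s_0 \to \infty$ by \eqref{integralcond}. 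The weak-Lebesgue Green-operator bounds (Theorem \ref{lpweakgreen} together with Theorems \ref{lpweakgrangreen}, \ref{lpweakgrangreen2}) upgrade this to
$$\|T(w)\|_{L_w^{p_*}(\Omega;\phi_\mu)} + \|\nabla T(w)\|_{L_w^{q_*}(\Omega;\phi_\mu)} \leq C_1\|g(u, |\nabla u|)\|_{L^1(\Omega;\phi_\mu)}.$$
Condition \eqref{multi}, used with $s = t = 1$, yields $g(s_0, s_0^{p_*/q_*}) \leq \tilde k\,g(1,1)(s_0^{\tilde p} + s_0^{\tilde q p_*/q_*})$, which quantifies how the constant contribution grows with $s_0$ relative to the decay of $\tilde\Lambda_g(s_0)$ and so makes the balance below explicit. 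Choosing first $s_0$ so large that $4 C_1 C \tilde\Lambda_g(s_0)\bigl((t_0/2)+C_0\varrho\bigr)^{\max(p_*,q_*)-1} \leq 1/2$, then $t_0$ so that $2C_1 g(s_0,s_0^{p_*/q_*})\|\phi_\mu\|_{L^1} \leq t_0/2$, and finally $\varrho_0$ with $C_0\varrho_0 \leq t_0/2$, secures $T(\mathcal{O}_{t_0}) \subset \mathcal{O}_{t_0}$ for every $\varrho \in (0,\varrho_0)$.

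\textbf{Continuity, compactness, and main obstacle.} Continuity follows from the boundedness of $g$ and dominated convergence: if $w_n \to w$ in the chosen topology, along a subsequence $w_n,\nabla w_n \to w,\nabla w$ a.e., hence $g(u_n,|\nabla u_n|) \to g(u,|\nabla u|)$ a.e.\ and is dominated by $\|g\|_\infty$, so the convergence holds in $L^1(\Omega;\phi_\mu)$; the continuity of $\BBG_{\mu}$ on $L^1(\Omega;\phi_\mu)$ transfers this to $T(w_n) \to T(w)$. Compactness is obtained from the uniform bound on $T(\mathcal{O}_{t_0})$ in $W^{1,r}(\Omega;\phi_\mu)$ for any $1 < r < q_*$: Rellich--Kondrachov on compact subsets of $\Omega\setminus\Sigma$, combined with uniform integrability near $\partial\Omega \cup \Sigma$ provided by the weak-$L^{p_*}$ and weak-$L^{q_*}$-gradient bounds, yields relative compactness in $L^1(\Omega;\phi_\mu)$. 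Schauder's theorem then produces a fixed point. The main obstacle is the parameter balance in the invariance step: the right-hand side of the key estimate grows like $t_0^{\max(p_*,q_*)}$ while only a linear bound $t_0$ is available, so absorbing this superlinear growth forces $\tilde\Lambda_g(s_0) \to 0$, which in turn risks blowing up the constant term $g(s_0,s_0^{p_*/q_*})$; the subcritical integrability \eqref{integralcond} combined with the growth condition \eqref{multi} is precisely what makes these two conflicting requirements simultaneously realizable.
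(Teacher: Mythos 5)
The Schauder fixed-point architecture, the set-up of the map $T$, and the invocation of Lemma \ref{subcrcongran}, Theorem \ref{lpweakgreen} and Theorems \ref{lpweakgrangreen}, \ref{lpweakgrangreen2}, \ref{lpweakgranmartin1-0} all match the paper's argument. The continuity and compactness claims are also essentially the paper's (the paper uses the explicit pointwise bounds $|\BBS(w)| \lesssim d\,d_\Sigma^{-\max(\am,0)}$, $|\nabla\BBS(w)| \lesssim d_\Sigma^{-\max(\am,0)-1}$ coming from $\|g\|_\infty < \infty$, which is cleaner, but your Vitali/Rellich--Kondrachov sketch is plausible).

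The genuine gap is in the invariance step, and it is exactly where you yourself flag the ``main obstacle.'' You bound the low-level term in Lemma \ref{subcrcongran} by the \emph{constant} $g(s_0, s_0^{p_*/q_*})\|\ei\|_{L^1}$, and then try to choose $s_0$, $t_0$, $\varrho_0$ in sequence. But the choice of $s_0$ depends on $t_0$ (to kill $\tilde\Lambda_g(s_0)\cdot t_0^{p_*-1}$), while $t_0$ must then dominate $g(s_0, s_0^{p_*/q_*})$, which depends on $s_0$; this circularity cannot in general be closed. Concretely, \eqref{integralcond} is compatible with $g(s, s^{p_*/q_*}) \asymp s^{p_*-\varepsilon}$ for arbitrarily small $\varepsilon > 0$ at infinity, in which case $\tilde\Lambda_g(s_0) \asymp s_0^{-\varepsilon}$, and the pair of requirements $t_0 \gtrsim g(s_0, s_0^{p_*/q_*}) \asymp s_0^{p_*-\varepsilon}$ and $t_0^{p_*-1}\tilde\Lambda_g(s_0) \lesssim 1$ (i.e.\ $t_0 \lesssim s_0^{\varepsilon/(p_*-1)}$) force $p_*-\varepsilon \leq \varepsilon/(p_*-1)$, i.e.\ $p_* - 1 \leq \varepsilon$, which fails for $\varepsilon$ small. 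So your parameter balance breaks down precisely in the regime \eqref{integralcond} is meant to cover.

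The paper resolves this by \emph{not} treating the low-level term as a constant. It fixes $s_0 = 1$ and introduces two additional quantities $Q_3(v) = \|v\|_{L^\kappa(\Omega;\ei)}$, $Q_4(v) = \|\nabla v\|_{L^\kappa(\Omega;\ei)}$ with $1 < \kappa < \min\{\tl p, \tl q, q_*\}$. On the set where $|u|\leq 1$ and $|\nabla u|\leq 1$, \eqref{multi} (with $s=t=1$, $a = |u|$, $b = |\nabla u|$) gives $g(u,|\nabla u|) \leq \tl k\, g(1,1)\,(|u|^{\tl p} + |\nabla u|^{\tl q}) \leq \tl k\, g(1,1)\,(|u|^\kappa + |\nabla u|^\kappa)$, so the low-level integral is bounded by $Q_3^\kappa + Q_4^\kappa + \varrho^\kappa$ rather than a constant. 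Crucially, \emph{every} term in the resulting self-map estimate
\bal
Q(\BBS(v)) \leq C\bigl(Q_1(v)^{p_*} + Q_2(v)^{q_*} + Q_3(v)^\kappa + Q_4(v)^\kappa + \varrho^\kappa\bigr)
\eal
is then \emph{superlinear} in the total quantity $Q = Q_1 + Q_2 + Q_3 + Q_4$, so the fixed-point inequality $C(t^{p_*} + t^{q_*} + 2t^\kappa + \varrho^\kappa) \leq t$ has a solution for $\varrho$ small, with no circularity. This is the essential idea your proposal is missing; the condition $\kappa < \min\{\tl p, \tl q\}$, which comes directly from \eqref{multi}, is exactly what lets the constant contribution from the region $\{|u|\leq 1\}$ be converted into a superlinear one. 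Without it, \eqref{integralcond} alone is not enough to close the invariance.

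Two smaller remarks: the paper's ambient space is $W^{1,1}(\Omega\setminus\Sigma;\ei)$ and the invariant set is cut out by all four $Q_i$, not just the two weak norms; and the constant $t_0$ in the paper is the \emph{largest} root of the fixed-point equation, not a small one obtained by a smallness cascade.
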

\begin{proof} We shall use the Schauder fixed point theorem to show the existence of a positive weak solution of \eqref{trans}. Define the operator $\BBS$ by
	\bal\BBS(v):=\BBG_\mu[g(v + \varrho\BBK_\mu[\nu], |\nabla(v + \varrho \BBK_\mu[\nu])|) ], \quad v \in W^{1,1}(\Gw\setminus\xS;\ei). 
	\eal
	Fix $1<\kappa<\min\{\tilde p,\tilde q, q_*\}$ and
	\bal \BAL Q_1(v) &: = \| v\|_{L^{p_*}_w(\Gw\setminus\xS;\ei)} \quad &&\text{for } v \in L^{p_*}_w(\Gw\setminus\xS;\ei), \\
	Q_2(v) &:=\|\nabla v\|_{L_w^{q_*}(\Gw\setminus\xS;\ei)} \quad &&\text{for } |\nabla v| \in L_w^{q_*}(\Gw\setminus\xS;\ei),\\
	Q_3(v) &: = \| v\|_{L^{\kappa}(\Gw;\ei)} \quad &&\text{for } v \in L^{\kappa}(\Gw;\ei),\\
	Q_4(v) &:=\|\nabla v\|_{L^{\kappa}(\Gw;\ei)} \quad &&\text{for } |\nabla v| \in L^{\kappa}(\Gw;\ei),\\
	Q(v)&:=Q_1(v) + Q_2(v)+ Q_3(v)+ Q_4(v).
	\EAL \eal
	
	\noindent \textbf{Step 1:} To estimate $L^1(\Gw;\ei)$-norm of  $g(v + \varrho\BBK_\mu[\nu],|\nabla(v+\varrho\BBK_\mu[\nu])|)$.
By Lemma \ref{subcrcongran} with $u=|v+\varrho \BBK_\mu[\nu]|$, $v=|\nabla(v+\varrho\BBK_\mu[\nu])|$ and Theorem \ref{lpweakgranmartin1}, we	can show that there exists a positive constant $C=C(N,\mu,\Omega,\tl k,\Lambda_g)$ such that
	\bel{lk5} \BAL \|g(v+\varrho \BBK_\mu[\nu], |\nabla(v+\varrho\BBK_\mu[\nu])|)\|_{L^1(\Gw\setminus\xS;\ei)}
	\leq C\left(Q_1(v)^{p_*}+Q_2(v)^{q_*}+Q_3(v)^\kappa+Q_4(v)^\kappa + \varrho^\kappa \right).
	\EAL \ee
	
	\noindent \textbf{Step 2:} To estimate $Q_i$, $i=1,2,3,4$, and $Q$.
	
	By \eqref{estgreen}, we have
	\bal \BAL Q_1(\BBS(v)) \leq c \norm{g(v + \varrho \BBK_\mu[\nu], |\nabla(v+\varrho\BBK_\mu[\nu])|)}_{L^1(\Gw;\ei)}.
	\EAL \eal
	This and \eqref{lk5} implies that
	\bal Q_1(\BBS(v)) \leq C\left(Q_1(v)^{p_*}+Q_2(v)^{q_*}+Q_3(v)^\kappa +Q_4(v)^\kappa + \varrho^\kappa \right)
	\eal
	where  $C=C(N,\mu,\Omega,\tl k,\Lambda_g)$. Next we deduce from \eqref{estgrangreen} that
	\bal
\BAL Q_2(\BBS(v)) &= \|\nabla\BBG_\mu[g(v + \varrho \BBK_\mu[\nu], |\nabla(v + \varrho \BBK_\mu[\nu])|)]\|_{L^{q_*}_w(\Gw\setminus\xS;\ei)} \\
	&\leq c \|g(v + \varrho \BBK_\mu[\nu],|\nabla(v+\varrho\BBK_\mu[\nu])|)\|_{L^1(\Gw;\ei)},
	\EAL
\eal
	which in turn implies
	\bal Q_2(\BBS(v)) \leq C\left(Q_1(v)^{p_*}+Q_2(v)^{q_*}+Q_3(v)^\kappa +Q_4(v)^\kappa + \varrho^\kappa \right)
	\eal
	where $C=C(N,\mu,\Omega,\tl k,\Lambda_g)$.
	By \eqref{estgreen} and \eqref{estgrangreen}, we can easily deduce that
	\bal
	\BAL Q_3(\BBS(v)) &= \| \BBG_\mu[g(v + \varrho \BBK_\mu[\nu],|\nabla(v + \varrho \BBK_\mu[\nu])|)]\|_{L^{\kappa}(\Gw;\ei)} \\[2mm]
	&\leq c \norm{g(v + \varrho \BBK_\mu[\nu], |\nabla(v+\varrho\BBK_\mu[\nu])|)}_{L^1(\Gw;\ei)},
	\EAL \eal
	and
	\bal \BAL Q_4(\BBS(v)) &= \|\nabla\BBG_\mu[g(v + \varrho \BBK_\mu[\nu], |\nabla(v + \varrho \BBK_\mu[\nu])|)]\|_{L^{\kappa}(\Gw;\ei)} \\
	&\leq c \|g(v + \varrho \BBK_\mu[\nu],|\nabla(v+\varrho\BBK_\mu[\nu])|)\|_{L^1(\Gw;\ei)}.
	\EAL \eal
	Thus,
	\bal Q_3(\BBS(v))+Q_4(\BBS(v)) \leq C \left(Q_1(v)^{p_\xm}+Q_2(v)^{q_*}+Q_3(v)^\kappa+Q_4(v)^\kappa + \varrho^\kappa \right)
	\eal
	where $C(N,\mu,\Omega,\tl k,\Lambda_g)$.
	Consequently,
	\bal Q(\BBS(v))\leq C\left(Q_1(v)^{p_*}+Q_2(v)^{q_*}+Q_3(v)^\kappa +Q_4(v)^\kappa + \varrho^\kappa \right).
	\eal
	
	Therefore if $Q(v) \leq t$ then
	$Q(\BBS(v)) \leq C\left(t^{p_*}+t^{q_*}+2t^\kappa +\varrho^\kappa \right)$.
	Since $p_*>q_*>\kappa>1$, there exists $\varrho_0>0$ depending on $N,\mu,\Gw,\tl k,\Gl_g$ such that for any $\varrho \in (0,\varrho_0)$ the equation
	$C\left(t^{p_*}+t^{q_*}+2t^\kappa +\varrho^\kappa \right) = t$ 
	admits a largest root $t_0>0$ which depends on $N,\mu,\Gw,\Gl_g, \tl k$. Therefore,
	\bel{ul11} Q(v) \leq t_0  \Longrightarrow Q(\BBS(v)) \leq t_0. \ee
	
	\noindent \textbf{Step 3:} We apply Schauder fixed point theorem to our setting.

	\textit{We claim that $\BBS$ is continuous}. Indeed, if $u_n\rightarrow u$ in $W^{1,1}(\Gw\setminus\xS;\ei)$ then since $g \in C^1(\BBR\times\BBR_+ ) \cap L^\infty(\BBR\times\BBR_+)$, it follows that $g(u_n + \varrho \BBK_\mu[\nu],|\nabla(u_n + \varrho \BBK_\mu[\nu])|) \to g(u + \varrho \BBK_\mu[\nu],|\nabla(u + \varrho \BBK_\mu[\nu])|)$ in $L^{1}(\Gw;\ei)$. Hence, $\BBS(u_n)\to \BBS(u)$ in $W^{1,1}(\Gw;\ei)$.
	
	\textit{Next we claim that $\BBS$ is compact}. Indeed, set
	\bal M:=\sup_{t,s>0}|g(t,s)|<+\infty. \eal
	Then we can easily deduce that there exists $C=C(N,\xO,\Sigma,M,\mu)$ such that
	\be\label{sup1}
	|\BBS(w)(x)|\leq Cd(x)d_{\Sigma}(x)^{-\max(\am,0)},\quad \forall x\in \xO\setminus\xS \; \text{ and } \;\forall w\in W^{1,1}(\Gw;\ei),
	\ee
	\be\label{sup2}
	|\nabla \BBS(w)(x)|\leq Cd_\xS(x)^{-\max(\am,0)-1}, \quad \forall x\in \xO\setminus\xS \; \text{ and } \;\forall w\in W^{1,1}(\Gw;\ei).
	\ee
	Let $\{u_n\}$ be a sequence in $W^{1,1}(\Gw\setminus\xS;\ei)$ then by \eqref{sup1} and \eqref{sup2}, $\{\BBS(u_n)\}$ is uniformly bounded in $W^{1,1}(\Gw\setminus\xS;\ei)$. Therefore there exists $\psi\in W^{2,p}_{\mathrm{loc}}(\xO\setminus\xS) $ and a subsequence still denoted by $\{\BBS(u_n)\}$ such that $\BBS(u_n)\rightarrow \psi$ in $L^p_{\mathrm{loc}}(\xO)$ and
	$\nabla \BBS(u_n)\to \nabla \psi $ in $L^p_{\mathrm{loc}}(\xO\setminus\xS)$ and a.e. in $\xO$. By dominated convergence theorem we have that $\BBS(u_n)\rightarrow \psi$ in $W^{1,1}(\Gw\setminus\xS;\ei)$.
	
	Now set
	\be
	\CO:=\{ \xi \in W^{1,1}(\Gw\setminus\xS;\ei): Q(u) \leq t_0  \}.\label{O}
	\ee
	Then $\CO$ is a closed, convex subset of $W^{1,1}(\Gw\setminus\xS;\ei)$ and by \eqref{ul11}, $\BBS(\CO) \sbs \CO$.
	Thus we can apply the Schauder fixed point theorem to obtain the existence of a function $v \in \CO$ such that $\BBS(v)=v$. This means that $v$ is a nonnegative solution of \eqref{trans} and hence there holds
	\bal  -\int_{\Gw}v L_\gm\zeta dx= \int_{\Gw}  g(v + \varrho \BBK_\mu[\nu],|\nabla(v + \varrho \BBK_\mu[\nu])|) \zeta dx  \forevery \zeta \in {\bf X}_\mu(\Gw\setminus\xS). \eal
The proof is complete.
\end{proof} \medskip

\begin{proof}[\textbf{Proof of Theorem \ref{exist1}}]  Let $\{g_n\}$ be a sequence of $C^1$ nonnegative functions defined on $\BBR\times\BBR_+$ such that
	\bal g_n(0,0)=g(0,0)=0,\; g_n \leq g_{n+1} \leq g,\; \sup_{\BBR \times \BBR_+}g_n=n \text{ and } \lim_{n \to \infty}\norm{g_n- g}_{L^\infty_{\text{loc}}(\BBR \times \BBR_+)}=0. \eal
	We observe that $\Gl_{g_n} \leq \Gl_g<\infty$ where $\Gl_{g_n}$ is defined as in \eqref{integralcond} with $g$ replaced by $g_n$. Therefore the constant $\varrho_0$ in Lemma \ref{transf} can be chosen to depend on $\Gl_g$ (and $N,\mu,\Gw, \tl k, \tl p,\tl q$), but independent of $n$. Similarly, the constant $t_0$ in Lemma \ref{transf} can be chosen to be dependent on $\Gl_g$ (and also $N,\mu,\Gw,\tl k, \tl p, \tl q$), but independent of $n$.   By Lemma \ref{transf}, for any $\varrho \in (0,\varrho_0)$ and $n \in \BBN$, there exists a solution $v_n \in \CO$ (where $\CO$ is defined in \eqref{O}) of
	\bal \left\{ \BAL -L_\mu v_n &= g_n (v_n + \varrho\BBK_\mu[\nu],|\nabla(v_n + \varrho\BBK_\mu[\nu])|) \quad \text{in } \Gw\setminus\xS, \\ \tr(v_n) &= 0.
	\EAL \right. \eal
	Set $u_n=v_n + \varrho \BBK_\mu[\nu]$ then $\tr(u_n)=\varrho \nu$ and
	\bel{ul13} -\int_{\Gw} u_n L_\gm\zeta {\dd} x= \int_{\Gw} g_n(u_n,|\nabla u_n|) \zeta {\dd} x -
	\varrho \int_{\Gw} \BBK_\gm[\gn]L_\gm \zeta {\dd} x \forevery \zeta \in {\bf X}_\mu(\Gw\setminus\xS). \ee
	Since $\{v_n\} \sbs \CO$, the sequence $\{g_n(v_n + \varrho\BBK_\mu[\nu], |\nabla(v_n +\varrho \BBK_\mu[\nu])|)\}_n$ is uniformly bounded in $L^1(\Gw,\ei)$ and the sequence $\{ \frac{\mu}{d_\xS^2}v_n \}$ is uniformly bounded in $L^{p_1}(G)$ for every compact subset $G \sbs \Gw\setminus\xS$ for some $p_1>0$.
	As a consequence, $\{ \Gd v_n \}_n$ is uniformly bounded in $L^{1}(G)$. By a standard regularity result  for elliptic equations, $\{ v_n\}$ is uniformly bounded in $W^{2,p_2}(G)$ for some $p_2>1$. Consequently, there exists a subsequence, still denoted by $\{v_n \}$, and a function $v$ such that $v_n \to v$ a.e. in $\Gw\setminus\xS$ and $\nabla v_n \to \nabla v$ a.e. in $\Gw\setminus\xS$. Therefore $u_n \to u$ a.e. in $\Gw\setminus\xS$ where $u=v + \varrho \BBK_\mu[\nu]$ and $g_n(u_n,|\nabla u_n|) \to  g(u,|\nabla u|)$ a.e. in $\Gw\setminus\xS$.
	
	We show that $u_n \to u$ in $L^1(\Gw\setminus\xS;\ei)$. Since $\{v_n\}$ is uniformly bounded in $L^{p}(\Gw\setminus\xS;\ei)$ for any $1<p<p_*$, by \eqref{lpweakmartin1}, we derive that $\{u_n\}$ is uniformly bounded in $L^{p}(\Gw\setminus\xS;\ei)$ for any $1<p<p_*$. Due to H\"older inequality, $\{u_n\}$ is equi-integrable in $L^1(\Gw\setminus\xS;\ei)$. We invoke Vitali's convergence theorem to derive that $u_n \to u$ in $L^1(\Gw\setminus\xS;\ei)$.
	
	By Lemma \ref{subcd0gran} and estimate \eqref{estM}, there exists a positive constant $C$ depending only on $N,\mu,\Gw,\Gl_g,\tl k,\tl p,\tl q, t_0$ such that for any Borel set $E \sbs \Gw\setminus\xS$,
	\bel{ul14} \BAL \int_E g_n(u_n,|\nabla u_n|) \ei {\dd} x
	\leq C\int_{\gl}^\infty g(s,s^{\frac{p_*}{q_*}}) s^{-1-p_*} {\dd} s + g(\gl,\gl^{\frac{p_*}{q_*}})\int_E \ei {\dd} x. \\
	\EAL \ee
	Note that the first term on the right hand-side of \eqref{ul14} tends to $0$ as $\gl \to \infty$. Therefore for any $\vge>0$, there exists $\gl>0$ such that the first term on the right hand-side of \eqref{ul14} is smaller than $\frac{\vge}{2}$. Fix such $\gl$ and put $\eta=\vge \left(2\max\{g(\gl,\gl^{\frac{p_*}{q_*}}),1\}\right)^{-1}$. Then, by \eqref{ul14},
	\bal \int_E \ei \dd x \leq \eta \Longrightarrow \int_E g_n(u_n,|\nabla u_n|) \ei \dd x < \vge. \eal
	Therefore the sequence $\{g_n(u_n,|\nabla u_n|)\}$ is equi-integrable in $L^1(\Gw\setminus\xS;\ei)$.
	
	Due to Vitali convergence theorem, we deduce that
	\bal g_n(u_n,|\nabla u_n|) \to  g(u,|\nabla u|)  \quad \text{in } L^1(\Gw\setminus\xS;\ei).
	\eal
	Finally, by sending $n \to \infty$ in each term of \eqref{ul13} we obtain
	\bal -\int_{\Gw} u L_\gm\zeta dx= \int_{\Gw} g(u,|\nabla u|) \zeta dx  - \varrho \int_{\Gw}\BBK_\gm[\gn]L_\gm \zeta dx \forevery \zeta \in {\bf X}_\mu(\Gw\setminus\xS). \eal
	This means $u$ is a nonnegative weak solution of \eqref{sourceprobrho}. Therefore,
\bal
u=\BBG_\mu[g(u,|\nabla u|)] + \varrho\BBK_\mu[\nu] \quad \text{in } \Omega\setminus\xS,
\eal
	which implies that $u \geq \varrho\BBK_\mu[\nu]$ in $\Gw\setminus\xS$. The proof is complete.
\end{proof}

\section{Nonlinear equations with $g(u,|\nabla u|) = |u|^p|\nabla u|^q$} \label{supercritical source}

\subsection{Abstract setting} In this subsection, we present an abstract setting which will be applied to our particular framework in the next subsection.

Let $\mathbf{Z}$ be a metric space and $\gw \in\GTM^+(\mathbf{Z}).$ Let $J : \mathbf{Z} \times \mathbf{Z} \to (0,\infty]$ be a Borel positive kernel such that $J$ is symmetric and $J^{-1}$ satisfies a quasi-metric
inequality, i.e. there is a constant $C>1$ such that for all $x, y, z \in \mathbf{Z}$,
\bal 
J(x,y)^{-1}\leq C\left(J(x,z)^{-1}+J(z,y)^{-1}\right).
\eal
Under these conditions, one can define the quasi-metric $\mathbf{d}$ by
\bal
\mathbf{d}(x,y):=J(x,y)^{-1}
\eal
and denote by $\sB(x,r):=\{y\in\mathbf{Z}:\; \mathbf{d}(x,y)<r\}$ the open $\mathbf{d}$-ball of radius $r > 0$ and center $x$.
Note that this set can be empty.

For $\xo\in\GTM^+(\mathbf{Z})$ and a positive function $\phi$,  the potentials $\BBJ[\gw]$ and $\BBJ[\gf,\gw]$ are defined by
\bal
\BBJ[\gw](x):=\int_{\mathbf{Z}}J(x,y) \dd\xo(y)\quad\text{and}\quad \BBJ[\gf,\gw](x):=\int_{\mathbf{Z}}J(x,y)\gf(y) \dd\gw(y).
\eal
For $t>1$ the capacity $\text{Cap}_{\BBJ,t}^\gw$ in $\mathbf{Z}$ is defined for any Borel $E\subset\mathbf{Z}$ by
\bal
\text{Cap}_{\BBJ,t}^\gw(E):=\inf\left\{\int_{\mathbf{Z}}\gf(x)^{t} \dd\gw(x):\;\;\gf\geq0,\;\; \BBJ[\gf,\gw] \geq\1_E\right\}.
\eal

\begin{proposition}\label{t2.1}  \emph{(\cite{KV})} Let $p>1$ and $\gl,\gw \in\GTM^+(\mathbf{Z})$ such that
	\ba
	\int_0^{2r}\frac{\gw\left(\sB(x,s)\right)}{s^2} \dd s &\leq C\int_0^{r}\frac{\gw\left(\sB(x,s)\right)}{s^2} \dd s ,\label{2.3}\\
	\sup_{y\in \sB(x,r)}\int_0^{r}\frac{\gw\left(\sB(y,r)\right)}{s^2} \dd s&\leq C\int_0^{r}\frac{\gw\left(\sB(x,s)\right)}{s^2} \dd s,\label{2.4}
	\ea
for any $r > 0,$ $x \in \mathbf{Z}$, where $C > 0$ is a constant. Then the following statements are equivalent.
	
	1. The equation $v=\BBJ[|v|^p,\gw]+\ell \BBJ[\gl]$ has a positive solution for $\ell>0$ small.
	
	2. For any Borel set $E \subset \mathbf{Z}$, there holds
	$
\int_E \BBJ[\1_E\gl]^p \dd \gw \leq C\, \gl(E).
	$
	
	3. The following inequality holds
	$
\BBJ[\BBJ[\gl]^p,\gw]\leq C\BBJ[\gl]<\infty\quad \gw-a.e.
$

	4. For any Borel set $E \subset \mathbf{Z}$ there holds
$
\gl(E)\leq C\, \emph{Cap}_{\BBJ,p'}^\gw(E).
$
\end{proposition}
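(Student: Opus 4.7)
The plan is to close the chain of implications by establishing separately the pair $(1)\Leftrightarrow(3)$ via monotone iteration, the pair $(2)\Leftrightarrow(4)$ by an $L^p$--$L^{p'}$ duality against the definition of $\mathrm{Cap}_{\BBJ,p'}^{\gw}$, and finally the delicate pair $(2)\Leftrightarrow(3)$ by nonlinear potential theory that crucially uses the doubling-type hypotheses \eqref{2.3}--\eqref{2.4}. This isolates the hard analytic content into a single step and keeps the other three equivalences at the level of algebraic manipulations.

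For $(3)\Rightarrow(1)$, the plan is to iterate $v_0:=\ell\,\BBJ[\gl]$ and $v_{n+1}:=\BBJ[v_n^p,\gw]+\ell\,\BBJ[\gl]$. Then $\{v_n\}$ is nondecreasing, and an induction hypothesis $v_n\leq M\,\BBJ[\gl]$ combined with $(3)$ yields $v_{n+1}\leq (CM^p+\ell)\,\BBJ[\gl]$. Choosing $M=2\ell$, the inequality $C(2\ell)^p+\ell\leq 2\ell$ holds for all sufficiently small $\ell>0$ since $p>1$. Monotone convergence then produces a positive solution with $v\leq 2\ell\,\BBJ[\gl]$. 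For the reverse direction $(1)\Rightarrow(3)$, I would work with the minimal solution (constructed as above), which automatically satisfies $v\leq 2\ell\,\BBJ[\gl]$. Inserting the pointwise lower bound $v\geq \ell\,\BBJ[\gl]$ into the defining equation gives $\ell^p\,\BBJ[\BBJ[\gl]^p,\gw]\leq v\leq 2\ell\,\BBJ[\gl]$, which is exactly $(3)$.

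For $(2)\Leftrightarrow(4)$, the argument is a standard capacitary/testing duality. Assuming $(4)$ and a Borel set $E$, one tests an admissible extremal $\gf$ for $\mathrm{Cap}_{\BBJ,p'}^{\gw}(E)$: the symmetry of $J$ yields
\[
\gl(E)\leq \int_{\mathbf{Z}}\BBJ[\gf,\gw]\,\dd\gl = \int_{\mathbf{Z}} \gf\, \BBJ[\gl]\,\dd\gw,
\]
and H\"older's inequality together with the infimum defining the capacity delivers $(2)$. Conversely, $(2)$ produces $\gf=c\,\BBJ[\1_E\gl]^{p-1}$ as an explicit admissible test function for the capacity, yielding $(4)$ with the same constant up to a numerical factor.

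The main obstacle is $(2)\Leftrightarrow(3)$, where one must promote \emph{integrated} information on $\gl$ to a \emph{pointwise} bound on the nonlinear potential $\BBJ[\BBJ[\gl]^p,\gw]$. I would proceed through a Wolff-type pointwise equivalence of the schematic form
\[
\BBJ[\BBJ[\gl]^p,\gw](x)\asymp \int_0^\infty\left(\frac{\gl(\sB(x,r))}{\gw(\sB(x,r))}\right)^{p-1}\,\frac{\gw(\sB(x,r))}{r^2}\,\dd r,
\]
which is valid precisely under \eqref{2.3}--\eqref{2.4}: a dyadic decomposition of $J$ along the $\mathbf{d}$-balls, combined with the doubling on $\gw$ and the transport estimate \eqref{2.4}, reduces the iterated nonlinear potential to this Wolff potential. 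At that point, the testing inequality $(2)$ translates into a pointwise Wolff-potential bound via a Muckenhoupt--Wheeden--Wolff argument, which is in turn equivalent to $(3)$. This is the core analytical step and the sole place where the quasi-metric and doubling structure are indispensable; everything else is soft.
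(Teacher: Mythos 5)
The paper does not prove Proposition~\ref{t2.1}; it cites it directly from Kalton and Verbitsky~\cite{KV}, so there is no in-paper proof to compare against. Judged on its own merits, your proposal has a genuine gap in the implication $(1)\Rightarrow(3)$. Your argument rests on the claim that the minimal solution $v$, built by iterating $v_0=\ell\,\BBJ[\gl]$, $v_{n+1}=\BBJ[v_n^p,\gw]+\ell\,\BBJ[\gl]$, ``automatically satisfies $v\le 2\ell\,\BBJ[\gl]$.'' That bound is not automatic: it was deduced in your $(3)\Rightarrow(1)$ step \emph{using} the pointwise inequality $(3)$ in the inductive estimate $v_{n+1}\le (CM^p+\ell)\BBJ[\gl]$. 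If only $(1)$ is assumed, you know $v_n\nearrow v\le u$ (the given solution), and $v\ge\ell\,\BBJ[\gl]$, but nothing forces $u$ or $v$ to be controlled above by a multiple of $\BBJ[\gl]$. So the conclusion $\ell^p\,\BBJ[\BBJ[\gl]^p,\gw]\le v\le 2\ell\,\BBJ[\gl]$ is circular as written, and the implication $(1)\Rightarrow(3)$ — which is one of the genuinely hard directions in the Kalton–Verbitsky theorem — is not established.

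There are also softer issues elsewhere. In $(2)\Leftrightarrow(4)$, the function $\gf=c\,\BBJ[\1_E\gl]^{p-1}$ need not be admissible for the capacity (there is no reason that $\BBJ[\gf,\gw]\ge1$ on $E$), and the H\"older step in the other direction yields $\gl(E)\le\|\gf\|_{L^{p'}(\gw)}\|\BBJ[\1_E\gl]\|_{L^p(\gw)}$, whose right-hand side involves an integral over all of $\mathbf{Z}$ rather than over $E$ as in condition $(2)$; passing between the two versions of the testing inequality is precisely where the quasi-metric and doubling hypotheses \eqref{2.3}--\eqref{2.4} are needed and is not a purely formal duality. Finally, $(2)\Leftrightarrow(3)$ is left entirely to an asserted Wolff-potential equivalence without argument. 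Your $(3)\Rightarrow(1)$ by monotone iteration with $M=2\ell$ is correct and standard. To repair the cycle you would need a genuine proof of $(1)\Rightarrow(2)$ (or $(1)\Rightarrow(3)$) that does not presuppose the a priori upper bound on the minimal solution; the Kalton–Verbitsky route threads all four conditions through the Wolff-potential characterization and does not obtain $(1)\Rightarrow(3)$ by the minimal-solution shortcut.
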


\subsection{Necessary and sufficient conditions for the existence}
For $\xa\leq N-1$ and $0<\sigma<N$, set
\ba \label{Nas}
\CN_{\ga,\sigma}(x,y)&:=\frac{\max\{|x-y|,d_\xS(x),d_\xS(y)\}^{\xa}}{|x-y|^{N-\sigma}\max\{|x-y|,d(x),d(y)\}^{\sigma}},\quad (x,y)\in\overline{\xO}\times\overline{\xO}, x \neq y, \\ \nonumber \BBN_{\alpha,\sigma}[\omega](x)&:=\int_{\overline{\Gw}} \CN_{\alpha,\sigma}(x,y) \dd\omega(y), \quad \omega \in \GTM^+(\overline \Gw).
\ea

It can be seen below that $\BBN_{\alpha,1}$ with $\dd \gw=d(x)^b d_\xS(x)^\theta \1_{\Omega \setminus \Sigma}(x)\,\dx$ satisfies all assumptions of $\BBJ$  in Proposition \ref{t2.1}, for some appropriate $b,\theta\in \BBR$. Let us first prove the quasi-metric inequality.

\begin{lemma}\label{ineq}
Let $\xa\leq N-1$. There exists a positive constant $C=C(N,\xO,\Sigma,\xa)$ such that
\bal \CN_{\alpha,1}(x,y)^{-1}\leq C\left(\CN_{\alpha,1}(x,z)^{-1}+\CN_{\alpha,1}(z,y)^{-1}\right),\quad \forall x,y,z\in \overline{\xO}.
\eal
\end{lemma}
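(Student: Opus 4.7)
The plan is to observe that $\CN_{\xa,1}(x,y)^{-1} = |x-y|^{N-1}\, M(x,y)\, D(x,y)^{-\xa}$, where $M(x,y) := \max\{|x-y|, d(x), d(y)\}$ and $D(x,y) := \max\{|x-y|, d_\xS(x), d_\xS(y)\}$. By the symmetry of the desired inequality we may assume $|x-z| \geq |z-y|$, so that $|x-y| \leq 2|x-z|$. Routine applications of the triangle inequality for $|\cdot|$, $d$ and $d_\xS$ (together with $d_\xS(y)\leq d_\xS(z)+|z-y|$ and the analogous bound for $d$) then yield the uniform estimates $|x-y|^{N-1}\,M(x,y) \leq C\,|x-z|^{N-1}\,M(x,z)$ and $D(x,y) \leq 3\, D(x,z)$.

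If $\xa \leq 0$ these two estimates combine directly: $D(x,y)^{-\xa} \leq 3^{-\xa} D(x,z)^{-\xa}$, hence $\CN_{\xa,1}(x,y)^{-1} \leq C\, \CN_{\xa,1}(x,z)^{-1}$, and the quasi-metric inequality follows.

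For $0 < \xa \leq N-1$, I plan to split further. First, if $|x-y| \geq |x-z|/2$, then all three pairwise distances are comparable and I aim to show $D(x,y) \geq c\, D(x,z)$ by a case analysis on which of $|x-z|$, $d_\xS(x)$, $d_\xS(z)$ realises the maximum in $D(x,z)$: the first two cases are immediate from $D(x,y) \geq |x-y| \geq |x-z|/2$ and $D(x,y) \geq d_\xS(x)$, while if $d_\xS(z)$ dominates I distinguish $|z-y| \leq d_\xS(z)/2$ (so that $d_\xS(y) \geq d_\xS(z)/2 \geq D(x,z)/2$ and $D(x,y) \geq d_\xS(y)$) from $|z-y| > d_\xS(z)/2$ (so that $d_\xS(z) \leq 2|z-y| \leq 2|x-z| \leq 4|x-y| \leq 4 D(x,y)$). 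Second, if $|x-y| < |x-z|/2$, I use the elementary bound
\[
\frac{|x-y|^{N-1}}{D(x,y)^\xa} \;=\; |x-y|^{N-1-\xa}\left(\frac{|x-y|}{D(x,y)}\right)^{\xa} \;\leq\; |x-y|^{N-1-\xa},
\]
which is valid because $|x-y| \leq D(x,y)$, $\xa > 0$, and $N-1-\xa \geq 0$. I then split on whether $d_\xS(x) \geq |x-z|/2$: in the former subcase $D(x,z) \lesssim d_\xS(x) \leq D(x,y)$ and the argument proceeds as in the case $\xa\leq 0$; in the latter, $d_\xS(z) \leq d_\xS(x)+|x-z| \leq \tfrac{3}{2}|x-z|$, whence $D(x,z) \asymp |x-z|$, and the comparison reduces to $|x-y|^{N-1-\xa} \leq |x-z|^{N-1-\xa}$, which holds since $|x-y| \leq |x-z|$ and $N-1-\xa \geq 0$.

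The main obstacle is the final subcase, where $x$ and $y$ cluster close together near $\xS$ but $z$ sits far away: the denominators $D(x,y)$ and $D(x,z)$ are then \emph{not} comparable, and the loss has to be absorbed through the non-negative power $|x-y|^{N-1-\xa}$. This is precisely the step where the hypothesis $\xa \leq N-1$ is used in an essential way; the rest of the argument is a fairly mechanical bookkeeping of triangle inequalities.
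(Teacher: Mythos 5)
The paper omits this proof, deferring to Lemma~6.4 of the authors' companion paper \cite{GkiNg_source}, so there is no in-text argument here to compare against; your blind proof is therefore providing the details the paper suppresses, and it is correct. The decomposition $\CN_{\alpha,1}(x,y)^{-1}=|x-y|^{N-1}\,M(x,y)\,D(x,y)^{-\alpha}$, the symmetry reduction to $|x-z|\geq|z-y|$, the uniform bounds $|x-y|^{N-1}M(x,y)\lesssim|x-z|^{N-1}M(x,z)$ and $D(x,y)\lesssim D(x,z)$, the split on the sign of $\alpha$, and the further split on whether $|x-y|\gtrsim|x-z|$ and on whether $d_\Sigma(x)\gtrsim|x-z|$ all check out. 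In particular the only genuinely delicate step is the final subcase, where $D(x,y)$ and $D(x,z)$ are not comparable and the loss is absorbed via $|x-y|^{N-1-\alpha}\leq|x-z|^{N-1-\alpha}$; you correctly identify this as the place where $\alpha\leq N-1$ is essential. Two small cosmetic remarks: the claim in subcase (a) that ``all three pairwise distances are comparable'' is not literally true ($|z-y|$ can be arbitrarily small, even zero), but your subsequent case analysis never actually uses it, only $|x-y|\geq|x-z|/2$ and $|z-y|\leq|x-z|$; and in subcase (b1) the inequality that is needed is $D(x,y)\gtrsim D(x,z)$ (the opposite direction to the one used when $\alpha\leq0$), which your displayed chain $D(x,z)\lesssim d_\Sigma(x)\leq D(x,y)$ does establish, so only the phrase ``proceeds as in the case $\alpha\leq 0$'' is slightly imprecise. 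Neither affects the correctness of the argument.
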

\begin{proof}
The proof is very similar to the one in \cite[Lemma 6.4]{GkiNg_source} and we omit it.
\end{proof}

Next we give sufficient conditions for \eqref{2.3} and \eqref{2.4} to hold.
\begin{lemma}[{\cite[Lemma 6.5]{GkiNg_source}}]\label{l2.3} Let $b>0$, $\theta>k-N$ and  $\dd \gw=d(x)^b d_\xS(x)^\theta \1_{\xO \setminus \Sigma}(x)\,\dx$. Then
\bal
\gw(B(x,s))\asymp \max\{d(x),s\}^b\max\{d_\xS(x),s\}^{\theta} s^N, \; \text{for all } x\in\xO \text{ and } 0 < s\leq 4 \mathrm{diam}(\xO).
\eal
\end{lemma}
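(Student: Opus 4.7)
The plan is to prove the two-sided estimate by a case analysis on the three scales $s$, $d(x)$, and $d_\Sigma(x)$. The right-hand side $\max\{d(x),s\}^b \max\{d_\Sigma(x),s\}^\theta s^N$ simplifies according to whether $s$ is much smaller or much larger than each of $d(x)$ and $d_\Sigma(x)$, yielding four asymptotic regimes; it suffices to verify the equivalence in each, the borderline scales being absorbed into the implicit constants.

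In the regime $s\lesssim d(x)$ and $s\lesssim d_\Sigma(x)$, the triangle inequality gives $d(y)\asymp d(x)$ and $d_\Sigma(y)\asymp d_\Sigma(x)$ for every $y\in B(x,s)\subset\Omega$, hence $\omega(B(x,s))\asymp d(x)^b d_\Sigma(x)^\theta s^N$. In the regime $s\gtrsim d(x)$ and $s\lesssim d_\Sigma(x)$, only $d_\Sigma$ is essentially constant on $B(x,s)$, and the $C^2$ regularity of $\partial\Omega$ together with $b>0$ yields the standard estimate $\int_{B(x,s)\cap\Omega}d(y)^b\,dy\asymp s^{N+b}$, so that $\omega(B(x,s))\asymp d_\Sigma(x)^\theta s^{N+b}$.

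The main work lies in the regimes with $s\gtrsim d_\Sigma(x)$, where $B(x,s)$ meets $\Sigma$. I would fix a closest point $\xi\in\Sigma$ to $x$, observe that $B(x,s)$ is contained in a local tubular chart $V(\xi,Cs)$ from Section~\ref{assumptionK}, and use the equivalence $d_\Sigma(y)\asymp\delta_\Sigma^\xi(y)$ from \eqref{propdist} to pass to adapted coordinates $y=(y',y'')$ with $y''\in\R^{N-k}$ transverse to $\Sigma$. Fubini together with polar integration in $y''$ then give
\bal
\int_{B(x,s)}d_\Sigma(y)^\theta\,dy\;\asymp\; s^k\int_0^{Cs}r^{\theta+N-k-1}\,dr\;\asymp\; s^{N+\theta},
\eal
where the inner integral is finite precisely because of the hypothesis $\theta>k-N$. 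When additionally $s\lesssim d(x)$ one has $d(y)^b\asymp d(x)^b$ on $B(x,s)$, producing $\omega(B(x,s))\asymp d(x)^b s^{N+\theta}$; when instead $s\gtrsim d(x)$, one combines the $\Sigma$-chart with a $C^2$ flattening of $\partial\Omega$ (compatible since $\Sigma\Subset\Omega$) to produce the extra factor $s^b$ and conclude $\omega(B(x,s))\asymp s^{N+b+\theta}$.

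The main obstacle is ensuring the estimates stay uniform when $B(x,s)$ simultaneously approaches both singular sets $\partial\Omega$ and $\Sigma$. This is resolved by exploiting $\dist(\Sigma,\partial\Omega)>0$: for small $s$ the two boundary effects occur in disjoint portions of $B(x,s)$ and can be handled independently by the arguments above, while for $s$ comparable to $\diam(\Omega)$ all three quantities $s$, $\max\{d(x),s\}$, and $\max\{d_\Sigma(x),s\}$ are bounded above and below by constants depending only on $\Omega$ and $\Sigma$, so the claim reduces to $\omega(\Omega)<\infty$, which follows from $b>0$ and $\theta>k-N$.
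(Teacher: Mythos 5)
The paper itself gives no proof of this lemma: it is quoted with a citation to \cite[Lemma 6.5]{GkiNg_source}, so there is no in-paper argument to compare against. Judged on its own terms, your proof sketch is correct and is almost certainly the natural route that the cited reference follows. You carry out the right case analysis on the three scales $s$, $d(x)$, $d_\Sigma(x)$; the constant-weight regimes and the single-boundary regimes are handled by the standard flattening argument at $\partial\Omega$ (using $b>-1$, in particular $b>0$) and by polar coordinates in the $(N-k)$-dimensional normal slice near $\Sigma$ (using $\theta>k-N$), giving exactly the powers $s^{N+b}$ and $s^{N+\theta}$. You also correctly identify the only delicate point, namely that the regime where $s\gtrsim d(x)$ and $s\gtrsim d_\Sigma(x)$ hold simultaneously forces $s\gtrsim\operatorname{dist}(\Sigma,\partial\Omega)$, so there is never a need to flatten both singular sets at once.

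One small imprecision worth flagging: in the paragraph about the doubly-singular regime you say ``one combines the $\Sigma$-chart with a $C^2$ flattening of $\partial\Omega$ \dots to produce the extra factor $s^b$.'' In fact that regime is vacuous for small $s$, since $d(x)+d_\Sigma(x)\geq\operatorname{dist}(\Sigma,\partial\Omega)$ forces $s\gtrsim 1$ there, after which all three factors $\max\{d(x),s\}$, $\max\{d_\Sigma(x),s\}$, $s$ are comparable to constants and the estimate reduces to $0<\omega(\Omega)<\infty$. Your closing remarks do acknowledge this, so the conclusion is unaffected; just be careful not to suggest that a simultaneous double-flattening argument is actually carried out. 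Similarly, ``the two boundary effects occur in disjoint portions of $B(x,s)$'' is slightly misleading: for small $s$ at most one of the two effects can occur at all, which is a stronger and cleaner statement.
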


\begin{lemma}\label{vol}
Let $\xa< N-1$, $b> 0$, $\theta>\max\{k-N,-1-\xa\}$ and $\dd \gw=d(x)^b d_\xS(x)^\theta$ $\1_{\Omega \setminus \Sigma}(x)\,\dx$.  Then \eqref{2.3}  holds.
\end{lemma}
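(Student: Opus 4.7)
The condition \eqref{2.3} is a doubling statement for $\int_0^r\gw(\sB(x,s))s^{-2}\dd s$, which I will reduce to a doubling property of $s\mapsto\gw(\sB(x,s))$ itself. The latter is obtained by identifying $\sB(x,s)$ with (a portion of) a Euclidean ball of suitable radius and applying Lemma \ref{l2.3}.

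\emph{Step 1 (Shape of $\sB(x,s)$).} Writing
\[
\CN_{\xa,1}(x,y)^{-1}= \frac{|x-y|^{N-1}\max\{|x-y|,d(x),d(y)\}}{\max\{|x-y|,d_\xS(x),d_\xS(y)\}^{\xa}},
\]
I perform a case analysis on the relative magnitudes of $|x-y|,d(x),d_\xS(x)$. On the set $\{|x-y|\leq\tfrac12(d(x)\wedge d_\xS(x))\}$ one has $d(y)\asymp d(x)$ and $d_\xS(y)\asymp d_\xS(x)$, giving $\CN_{\xa,1}(x,y)\asymp d_\xS(x)^{\xa}|x-y|^{-(N-1)}d(x)^{-1}$; on the complementary set where $|x-y|$ dominates every $d$-quantity one has $\CN_{\xa,1}(x,y)\asymp |x-y|^{\xa-N}$; the intermediate regimes are handled analogously. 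Combining them, I will exhibit constants $c_1<c_2$ and an increasing function $R(x,s)$ satisfying the quantitative doubling bound $R(x,2s)\leq CR(x,s)$ such that
\[
B(x,c_1R(x,s))\cap\xO \;\subset\; \sB(x,s) \;\subset\; B(x,c_2R(x,s))\cap\xO.
\]
Explicitly, $R(x,s)\asymp\bigl(s\,d_\xS(x)^{\xa}/d(x)\bigr)^{1/(N-1)}$ in the inner regime and $R(x,s)\asymp s^{1/(N-\xa)}$ in the outer regime, both well-defined thanks to $\xa<N-1$.

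\emph{Step 2 (Doubling of $\gw(\sB(x,s))$).} Lemma \ref{l2.3} gives
\[
\gw(B(x,R))\asymp \max\{d(x),R\}^{b}\max\{d_\xS(x),R\}^{\theta}R^{N},\qquad 0<R\leq 4\diam\xO.
\]
Each of the three factors is doubling in $R$ (since $b,\theta$ are finite and each $\max$ grows by at most a constant when $R$ is doubled). Combined with Step 1 this yields
\[
\gw(\sB(x,2s))\leq C\gw(\sB(x,s)),\qquad s>0,\;x\in\overline{\xO},
\]
with $C$ depending only on $N,\xO,\xS,\xa,b,\theta$.

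\emph{Step 3 (Integral doubling).} Using the $s$-doubling from Step 2,
\[
\int_r^{2r}\frac{\gw(\sB(x,s))}{s^2}\dd s\leq \gw(\sB(x,2r))\int_r^{2r}s^{-2}\dd s\leq \frac{C\gw(\sB(x,r))}{r},
\]
whereas monotonicity of $\sB(x,\cdot)$ together with the doubling gives
\[
\int_0^r\frac{\gw(\sB(x,s))}{s^2}\dd s\geq \gw(\sB(x,r/2))\int_{r/2}^rs^{-2}\dd s =\frac{\gw(\sB(x,r/2))}{r}\gtrsim \frac{\gw(\sB(x,r))}{r}.
\]
Combining these two estimates yields \eqref{2.3}.

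\textbf{Main obstacle.} The heart of the argument is Step 1, in particular handling the transition where $|x-y|$ crosses the scales $d(x)\wedge d_\xS(x)$ and $d(x)\vee d_\xS(x)$: in this band neither maximum appearing in $\CN_{\xa,1}$ is unambiguously controlled by $|x-y|$ alone, and one must patiently track each sub-case to obtain the sandwich $B(x,c_1R)\cap\xO\subset\sB(x,s)\subset B(x,c_2R)\cap\xO$ with a single doubling radius $R(x,s)$. The hypotheses $b>0$ and $\theta>\max\{k-N,-1-\xa\}$ enter via Lemma \ref{l2.3} (local finiteness of $\gw$ and the validity of the sharp comparison) that underlies Step 2.
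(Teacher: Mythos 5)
Your proposal follows the same strategy as the paper: sandwich the quasi-metric ball $\sB(x,s)$ between Euclidean balls of a well-chosen radius, apply Lemma \ref{l2.3}, and deduce \eqref{2.3}. The paper implements your Step~1 by splitting $\overline{\xO}$ into the three zones $\Sigma_{\beta_0/4}$, $\xO_{\beta_0/4}$, and the remainder: since $\xS\Subset\xO$, in each zone at most one of $d$, $d_\xS$ can be small, which collapses the case analysis on the relative sizes of $|x-y|,d(x),d_\xS(x)$ to essentially two regimes per zone and yields explicit piecewise-power formulas \eqref{xoest1}--\eqref{xoest4} for $\gw(\sB(x,s))$. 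One clarification on your "explicit" radii: the regime where $|x-y|$ exceeds \emph{both} $d(x)$ and $d_\xS(x)$ is essentially vacuous (again because $\xS\Subset\xO$), so the comparison radius that actually matters for $x$ near $\xS$ and $s>d_\xS(x)^{N-1-\xa}$ is $s^{1/(N-1-\xa)}$ (coming from the intermediate band $d_\xS(x)\lesssim|x-y|\lesssim d(x)$), not $s^{1/(N-\xa)}$; you lump this into "intermediate regimes handled analogously," but it is the dominant one. Your Step~3, deriving \eqref{2.3} abstractly from the doubling of $s\mapsto\gw(\sB(x,s))$, is a clean formalization of the final "combining the estimates" step that the paper leaves implicit.
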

\begin{proof}
We note that if $s\geq (4\diam(\xO))^{N-\xa}$ then $\xo(\sB(x,s))=\xo(\overline{\xO})<\infty$, where $\sB(x,s)=\{y \in \Omega \setminus \Sigma: \mathbf{d}(x,y)<s\}$ and $\mathbf{d}(x,y)= \CN_{\xa,1}(x,y)^{-1}$.

We first assume that $0<\xa<N-1$.  Let $x\in \Sigma_\frac{\xb_0}{4}$ then
\ba \label{C0d}
0<C_0 \leq d(x)\leq 2 \diam(\xO),
\ea
where $C_0$ depends on $\xO,\Sigma,\beta_0$. Set
\bal
\sC(x,s):=\left\{y \in \Omega \setminus \Sigma: \frac{|x-y|^{N-1}}{\max\{|x-y|,d_\xS(x),d_\xS(y)\}^\xa}< s\right\}.
\eal
Then
\ba \label{CB} \sC\left(x,\frac{s}{2\diam(\xO)}\right)\subset\sB(x,s)\subset \sC\left(x,\frac{s}{C_0}\right).
\ea
We note that $B(x,S_1)\subset\sC(x,s)\subset B(x,l_1S_1)$
where $S_1=\max\{s^{\frac{1}{N-1-\xa}},s^{\frac{1}{N-1}}d_\xS(x)^{\frac{\xa}{N-1}} \}$ and $l_1=2^{\frac{\xa}{N-1-a}}$. Therefore, by Lemma \ref{l2.3}, we obtain
\ba\nonumber
\xo(\sB(x,s))&\asymp \max\left\{d_\xS(x),\max\{s^{\frac{1}{N-1-\xa}},s^{\frac{1}{N-1}}d_\xS(x)^{\frac{\xa}{N-1}} \}\right\}^\theta \max\{s^{\frac{1}{N-1-\xa}},s^{\frac{1}{N-1}}d_\xS(x)^{\frac{\xa}{N-1}} \}^N\\
&\asymp d_\xS(x)^{\theta+\frac{\xa N}{N-1}}s^{\frac{N}{N-1}}\1_{(0,d_\xS(x)^{N-1-\xa})}(s) +  s^{\frac{\theta+N}{N-1-\xa}} \1_{[d_\xS(x)^{N-1-\xa},M)}(s) + \1_{[M,\infty)}(s),
\label{xoest1}
\ea
where
\bal
M:=(4\diam(\xO))^\frac{N(N-\xa)}{b+N}+(4\diam(\xO))^{\frac{(N-2)(N-\xa)}{N}}+(4\diam(\xO))^{\frac{(N-\xa-1)(N-\xa)}{\theta +N}}.
\eal

Next we assume that $\xa\leq0$. Let $x\in \Sigma_\frac{\xb_0}{4}$ then \eqref{C0d} and \eqref{CB} hold. We also have $B(x,{l_2S_2})\subset\sC(x,s)\subset B(x,S_2)$, where $S_2=\min\{s^{\frac{1}{N-1-\xa}},s^{\frac{1}{N-1}}d_\xS(x)^{\frac{\xa}{N-1}}\}$ and $l_2=2^{\frac{\xa}{N-1}}$. Therefore by Lemma \ref{l2.3}, we obtain
\bal
\xo(\sB(x,s))&\asymp \max\left\{d_\xS(x),\min\{s^{\frac{1}{N-1-\xa}},s^{\frac{1}{N-1}}d_\xS(x)^{\frac{\xa}{N-1}}\}\right\}^\theta \min\{s^{\frac{1}{N-1-\xa}},s^{\frac{1}{N-1}}d_\xS(x)^{\frac{\xa}{N-1}}\}^N\\
&\asymp d_\xS(x)^{\theta+\frac{\xa N}{N-1}}s^{\frac{N}{N-1}} \1_{(0,d_\xS(x)^{N-1-\xa})}(s) + s^{\frac{\theta+N}{N-1-\xa}}\1_{[d_\xS(x)^{N-1-\xa},M)}(s) + \1_{[M,\infty)}(s).
\eal

Next consider $x\in \xO_{\frac{\xb_0}{4}},$ then there exists a positive constant $C_3=C_3(\xO,\Sigma,\xa,\xb_0)$ such that $C_3\leq d_\xS(x)< 2 \diam(\xO).$ Set
\bal
\sE(x,s):=\{y \in \Omega \setminus \Sigma: |x-y|^{N-1}\max\{|x-y|,d(x),d(y)\}< s\}.
\eal
We see that
\bal
\sE(x,\min\{C_3^\xa,2^\xa\diam(\xO)^\xa\}s)\subset\sB(x,s)\subset \sE(x,\max\{C_3^\xa,2^\xa\diam(\xO)^\xa\}s).
\eal
We also have
$B(x,l_3S_3)\subset\sE(x,s)\subset B(x,S_3)$, where $S_3=\min\{s^{\frac{1}{N}},s^{\frac{1}{N-1}}d(x)^{-\frac{1}{N-1}}\}$ and $l_3=2^{-\frac{1}{N-2}}$. Again, by Lemma \ref{l2.3}, we obtain
\bal
\xo(\sB(x,s))&\asymp \max\left\{d(x),\min\{s^{\frac{1}{N}},s^{\frac{1}{N-1}}d(x)^{-\frac{1}{N-1}}\}\right\}^b \min\{s^{\frac{1}{N}},s^{\frac{1}{N-1}}d(x)^{-\frac{1}{N-1}}\}^N\\
&\asymp d(x)^{b-\frac{N}{N-1}}s^{\frac{N}{N-1}}\1_{(0,d(x)^{N})}(s) + s^{\frac{b+N}{N}}\1_{[d(x)^{N},M)}(s) + \1_{[M,\infty)}(s).
\eal

Let $0<\bar \xb \leq \frac{\xb_0}{4}$ and $x\in \xO\setminus (\xO_{\bar \xb}\cup \Sigma_{\bar \xb}).$ Then there exists a positive constant $C_4=C_4(\xO,\Sigma,\bar \xb)$ such that $C_4\leq d_\xS(x),d(x)< 2 \diam(\xO).$ By Lemma \ref{l2.3}, we can show that
\ba \label{xoest4}
\xo(\sB(x,s)) \asymp s^{\frac{N}{N-2}}\1_{(0,M)}(s) + \1_{[M,\infty)}(s).
\ea

Combining \eqref{xoest1}--\eqref{xoest4} leads to \eqref{2.3}. The proof is complete.
\end{proof}

\begin{lemma}\label{vol-2}
	We assume that $\xa< N-1$, $b> 0$, $\theta>\max\{k-N,-1-\xa\}$ and $\dd \gw=d(x)^b d_\xS(x)^\theta\1_{\Omega \setminus \Sigma}(x)\,\dx$.  Then \eqref{2.4}  holds.
\end{lemma}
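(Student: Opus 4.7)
The approach mirrors that of Lemma \ref{vol}: I split into three regimes based on the location of $x$, namely (a) $x \in \Sigma_{\beta_0/4}$, (b) $x \in \Omega_{\beta_0/4}$, and (c) $x$ bounded away from both $\Sigma$ and $\partial\Omega$. In each regime, the piecewise estimates for $\omega(\sB(\cdot,s))$ established in the proof of Lemma \ref{vol} (in particular \eqref{xoest1} and \eqref{xoest4}, and their analogue near $\partial\Omega$) apply verbatim with $y$ in place of $x$, expressing $\omega(\sB(y,s))$ as an explicit piecewise function of $s$, $d(y)$, and $d_\Sigma(y)$.

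First, I would combine the elementary bounds $|d(y)-d(x)| \leq |x-y|$ and $|d_\Sigma(y)-d_\Sigma(x)| \leq |x-y|$ with the control on $|x-y|$ for $y \in \sB(x,r)$ that comes from the definition $\mathbf{d}(x,y) = \CN_{\alpha,1}(x,y)^{-1} < r$ together with the quasi-metric inequality of Lemma \ref{ineq}. In regime (a), this yields $d_\Sigma(y) \asymp d_\Sigma(x)$ up to an additive error of order at most $r^{1/(N-1-\alpha)}$; similarly in regime (b), $d(y) \asymp d(x)$ up to an additive error of order at most $r^{1/N}$; in regime (c) both $d(y)$ and $d_\Sigma(y)$ stay bounded from below by universal constants.

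Next, I would substitute these comparisons into the piecewise formulas for $\omega(\sB(y,s))$ and verify that the transition thresholds (e.g., $s = d_\Sigma(y)^{N-1-\alpha}$ in regime (a) or $s = d(y)^N$ in regime (b)) for $y$ differ from the corresponding thresholds for $x$ by at most a constant multiplicative factor when $s \leq r$. Combined with the continuity of each piece at the transition point, this produces a pointwise comparison $\omega(\sB(y,s)) \leq C\,\omega(\sB(x,s))$ valid for all $s \in (0,r)$ and all $y \in \sB(x,r)$, with $C$ depending only on $N,\Omega,\Sigma,\alpha,b,\theta$. Integration against $s^{-2}\,ds$ on $(0,r)$ then yields \eqref{2.4}.

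\textbf{Main obstacle.} The delicate step is the transitional regime in which $|x-y|$ is comparable to $d_\Sigma(x)$ (in (a)) or to $d(x)$ (in (b)), so that the breakpoints of the piecewise expressions for $x$ and $y$ no longer lie in the same piece. Here one must argue that the two adjacent pieces of each formula agree up to constants at the breakpoint, so the comparison $\omega(\sB(y,s)) \lesssim \omega(\sB(x,s))$ survives (perhaps with a worse constant) across the transition. This is the only place where the hypothesis $\theta > \max\{k-N,-1-\alpha\}$ plays a role, ensuring that both the interior and the exterior of the breakpoint contribute comparable (and integrable) mass. Once this uniform comparison has been established, \eqref{2.4} follows by a direct integration identical to the one implicit in the proof of \eqref{2.3}.
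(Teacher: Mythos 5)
Your proposal claims that a pointwise comparison $\omega(\sB(y,s)) \leq C\,\omega(\sB(x,s))$ holds for all $s \in (0,r)$ and all $y \in \sB(x,r)$, and that this follows from continuity of the pieces at the breakpoint; this is false, and it is exactly where the argument breaks. From \eqref{xoest1}, for small $s$ one has $\omega(\sB(z,s)) \asymp d_\Sigma(z)^{\theta+\alpha N/(N-1)}\,s^{N/(N-1)}$, so if $\theta+\alpha N/(N-1)>0$ (which is allowed under the hypotheses, e.g.\ $N=5$, $k=1$, $\alpha=1$, $\theta=0$), and if $d_\Sigma(x)$ is tiny while $y\in\sB(x,r)$ has $d_\Sigma(y)\sim r^{1/(N-1-\alpha)}\gg d_\Sigma(x)$, then for $s\ll d_\Sigma(x)^{N-1-\alpha}$ the ratio $\omega(\sB(y,s))/\omega(\sB(x,s))\asymp(d_\Sigma(y)/d_\Sigma(x))^{\theta+\alpha N/(N-1)}$ is unbounded. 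The two breakpoints $d_\Sigma(y)^{N-1-\alpha}$ and $d_\Sigma(x)^{N-1-\alpha}$ do not differ by a bounded multiplicative factor in this regime, and continuity at each separate breakpoint gives no control between them.

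What does work, and what the paper actually does, is to compare the \emph{integrals} rather than the integrands. Carrying out the integration $\int_0^r \omega(\sB(y,s))\,s^{-2}\,\dd s$ first produces the closed-form piecewise expression displayed in Case 1 of the paper's proof, namely $d_\Sigma(y)^{\theta+\alpha N/(N-1)} r^{1/(N-1)}$ when $r<d_\Sigma(y)^{N-1-\alpha}$ and $r^{(\theta+1+\alpha)/(N-1-\alpha)}$ when $r\geq d_\Sigma(y)^{N-1-\alpha}$, with agreement up to constants at $r = d_\Sigma(y)^{N-1-\alpha}$. Crucially, in the second regime the value no longer depends on $d_\Sigma(y)$, so even though $d_\Sigma(x)$ and $d_\Sigma(y)$ may differ wildly the integrals remain comparable; the case analysis in the paper (splitting on $|x-y|\lessgtr \tfrac12 d_\Sigma(x)$ and on $d_\Sigma(\cdot)\lessgtr r^{1/(N-1-\alpha)}$) is precisely this check. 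Note also that the hypothesis $\theta>-1-\alpha$ is not merely used at the transition: it guarantees $\frac{\theta+N}{N-1-\alpha}>1$, which is needed for the integral in the second piece to converge at $s=0$ at all, so the integral formula itself requires it. You should replace the pointwise-comparison step with this integral-level comparison.
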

\begin{proof}
We  consider only the case $\xa>0$  and $x\in \Sigma_{\frac{\xb_0}{16}}$ since the other cases $x\in \xO_{\frac{\xb_0}{16}}$ and $x\in \xO\setminus(\xO_{\frac{\xb_0}{16}}\cup \Sigma_{\frac{\xb_0}{16}})$ can be treated similarly and we omit them. We take $r>0$.

\noindent \textbf{Case 1:} $0<r<(2\diam(\xO))^{-\xa}\left(\frac{\xb_0}{16}\right)^{N}$. In this case, we note that $\sB(x,r) \subset \Sigma_{\frac{\xb_0}{8}}$. This and  \eqref{xoest1} imply that, for any $y \in \sB(x,r)$,
\ba \nonumber
\int_0^r\frac{\xo(\sB(y,s))}{s^2} \dd s\asymp d_\xS(y)^{\theta+\frac{\xa N}{N-1}}r^{\frac{1}{N-1}}\1_{(0,d_\xS(y)^{N-1-\xa})}(r) + r^{\frac{\theta+1+\xa}{N-1-\xa}}\1_{[d_\xS(y)^{N-1-\xa},M)}(r) + \1_{[M,\infty)}(r).
\ea

If $|x-y|\leq \frac{1}{2}d_\xS(x)$ then $\frac{1}{2}d_\xS(x)\leq d_\xS(y)\leq \frac{3}{2}d_\xS(x)$. Therefore, when $d_\xS(y),d_\xS(x)\geq r^{\frac{1}{N-1-\xa}}$, we obtain
\bal
\int_0^r\frac{\xo(\sB(y,s))}{s^2} \dd s \asymp d_\xS(y)^{\theta+\frac{\xa N}{N-1}}r^{\frac{1}{N-1}}\asymp d_\xS(x)^{\theta+\frac{\xa N}{N-1}}r^{\frac{1}{N-1}}\asymp \int_0^r\frac{\xo(\sB(x,s))}{s^2} \dd s.
\eal
If $d_\xS(y)\geq r^{\frac{1}{N-1-\xa}}$ and $d_\xS(x)\leq r^{\frac{1}{N-1-\xa}}$ then $d_\xS(y)\leq \frac{3}{2}r^{\frac{1}{N-1-\xa}}$, which implies
\bal
\int_0^r\frac{\xo(\sB(y,s))}{s^2} \dd s \asymp d_\xS(y)^{\theta+\frac{\xa N}{N-1}}r^{\frac{1}{N-1}}\asymp r^{\frac{\theta+1+\xa}{N-1-\xa}}\asymp \int_0^r\frac{\xo(\sB(x,s))}{s^2} \dd s.
\eal
If $d_\xS(y)\leq r^{\frac{1}{N-1-\xa}}$ and $d_\xS(x)\geq r^{\frac{1}{N-1-\xa}}$ then $d_\xS(x)\leq 2r^{\frac{1}{N-1-\xa}}$, which yields
\bal
\int_0^r\frac{\xo(\sB(x,s))}{s^2} \dd s\asymp d_\xS(x)^{\theta+\frac{\xa N}{N-1}}r^{\frac{1}{N-1}}\asymp r^{\frac{\theta+1+\xa}{N-1-\xa}}\asymp \int_0^r\frac{\xo(\sB(y,s))}{s^2} \dd s.
\eal
If $d_\xS(y)\leq r^{\frac{1}{N-2-\xa}}$ and $d_\xS(x)\leq r^{\frac{1}{N-2-\xa}}$ then
\bal
\int_0^r\frac{\xo(\sB(x,s))}{s^2} \dd s\asymp r^{\frac{\theta+1+\xa}{N-1-\xa}}\asymp \int_0^r\frac{\xo(\sB(y,s))}{s^2} \dd s.
\eal

Now we assume that $y \in \sB(x,r)$ and $|x-y|\geq \frac{1}{2}d_\xS(x).$ Then
\bal d_\xS(y)\leq \frac{3}{2}|x-y| \quad \text{and} \quad
|x-y|\leq C(N,\xO,\xS,\xb_0) r^{\frac{1}{N-\xa-1}}.
\eal
Hence $d_\xS(x),d_\xS(y)\lesssim r^{\frac{1}{N-\xa-1}}$. Proceeding as above we obtain the desired result. \medskip

\noindent \textbf{Case 2:} $r\geq \left(\frac{\xb_0}{16}\right)^{N}(2\diam(\xO))^\xa$. By \eqref{xoest1}--\eqref{xoest4}, we can prove that
\bal
\int_0^r\frac{\xo(\sB(y,s))}{s^2} \dd s \asymp 1,\quad \forall y\in \overline{\xO},
\eal
and the desired result follows easily in this case.
\end{proof}


For $\alpha \leq N-1$, $0<\sigma<N$, $b>0$, $\theta>-N+k$ and $s>1$, define the capacity 
\bal \text{Cap}_{\BBN_{\alpha,\sigma},s}^{b,\theta}(E) :=\inf\left\{\int_{\overline{\xO}}d^b d^\theta_\xS\gf^s\,\dx:\;\; \gf \geq 0, \;\;\BBN_{\alpha,\sigma}[ d^b d_{\Sigma}^\theta\gf ]\geq\1_E\right\} \quad \text{for  Borel set } E\subset\overline{\xO}.
\eal
Here $\1_E$ denotes the indicator function of $E$. Furthermore, by \cite[Theorem 2.5.1]{Ad},
\be\label{dualcap}
(\text{Cap}_{\BBN_{\alpha,\sigma},s}^{b,\theta}(E))^\frac{1}{s}=\sup\{\tau(E):\tau\in\GTM^+(E), \|\Nthb[\tau]\|_{L^{s'}(\xO;d^b d^\theta_\xS)} \leq 1 \}.
\ee

If $\xn\in\mathfrak{M}^+(\partial\xO\cup\xS),$ we extent $\xn$ in the whole $\overline{\xO}$ by setting $\xn(\xO\setminus \xS)=0.$ By applying Proposition \ref{t2.1} with $J(x,y)=\CN_{2\am+1,1}(x,y)$, $\dd \xo= d(x)^{p+1}d_{\xS}(x)^{-\am(p+1)-(\am+1)q}\dx$ and $\dd \lambda = \dd \xn$, and taking into account that \eqref{2.3} and \eqref{2.4} are satisfied due to Lemma \ref{vol} and Lemma \ref{vol-2} respectively, we obtain the following result.

\begin{theorem}\label{sourceth}   Let $\xm<\frac{(N-2)^2}{4}$, $\xn\in\mathfrak{M}^+(\partial\xO\cup\xS)$ and $p \geq 0$, $q\geq0$ such that $p+q>1$ and
$\am p+(\am+1)q<2+\am$.
Then the following statements are equivalent.
	
	1. For $\ell>0$ small, the following equation has a positive solution 
\bal
v=\BBN_{2\am+1,1}[|v|^{p+q}d^{p+1}d_{\xS}^{-\am(p+1)-(\am+1)q}]+\ell \BBN_{2\am+1,1}[\xn].
\eal

	2. For any Borel set $E \subset \overline{\xO}$, there holds
\ba\label{con1}
\int_E \BBN_{2\am+1,1}[\1_E\xn](x)^{p+q} d(x)^{p+1}d_{\xS}(x)^{-\am(p+1)-(\am+1)q}\dx \leq C\, \xn(E).
\ea
	
	3. The following inequality holds
\ba\label{con2}
\BBN_{2\am+1,1}\bigg[\BBN_{2\am+1,1}[\xn]^{p+q}d^{p+1}d_{\xS}^{-\am(p+1)-(\am+1)q}\bigg]\leq C\BBN_{2\am+1,1}[\xn]<\infty\quad \text{a.e. in } \Omega.
\ea

	4. For any Borel set $E \subset \overline{\xO}$ there holds
\bal 
\xn(E)\leq C\, \mathrm{Cap}_{\BBN_{2\am+1,1},(p+q)'}^{p+1,-\am(p+1)-(\am+1)q}(E).
\eal
\end{theorem}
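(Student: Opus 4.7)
The plan is to obtain Theorem \ref{sourceth} as a direct application of Proposition \ref{t2.1} with the specific choices $\mathbf{Z} = \overline{\Omega}$, $J(x,y) = \CN_{2\am + 1, 1}(x,y)$, $d\omega = d(x)^{p+1} d_\xS(x)^{-\am(p+1) - (\am + 1)q} \mathbf{1}_{\Omega \setminus \xS}(x) \, dx$, $\lambda = \nu$ (extended by zero off $\partial\Omega \cup \xS$), and the abstract exponent $p$ of Proposition \ref{t2.1} replaced by $p+q > 1$. Under this dictionary, each of statements 1--4 of Proposition \ref{t2.1} coincides verbatim with the corresponding statement of the theorem; in particular, $\BBJ[\varphi,\omega]$ equals $\BBN_{2\am + 1, 1}[d^{p+1} d_\xS^{-\am(p+1) - (\am + 1)q} \varphi]$, and statement 4 reduces to the claimed capacity inequality after unwinding the definition of the capacity (and, if a dual form is desired, invoking \eqref{dualcap}).

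The bulk of the work is the verification of the structural hypotheses of Proposition \ref{t2.1}. Symmetry of $J$ is immediate from the form of $\CN_{2\am + 1, 1}$. The quasi-metric inequality for $J^{-1}$ is Lemma \ref{ineq}, whose applicability requires $2\am + 1 \leq N - 1$; this holds because $\am \leq H \leq (N-2)/2$. The ball-measure conditions \eqref{2.3} and \eqref{2.4} follow from Lemmas \ref{vol} and \ref{vol-2} respectively, applied with $\alpha = 2\am + 1$, $b = p + 1$, and $\theta = -\am(p+1) - (\am + 1)q$.

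The only delicate point is checking the parameter constraints of Lemmas \ref{vol}--\ref{vol-2}, namely $\alpha < N - 1$, $b > 0$, and $\theta > \max\{k - N, -1 - \alpha\}$. The condition $b = p + 1 > 0$ is trivial, while $\alpha = 2\am + 1 < N - 1$ follows from $\am < (N-2)/2$, which holds both when $k = 0$ (since then the hypothesis $\mu < ((N-2)/2)^2 = H^2$ forces $\am < H$) and when $k \geq 1$ (since then $\am \leq H < (N-2)/2$). The lower bound $\theta > -1 - \alpha = -2 - 2\am$ rearranges exactly to the standing hypothesis $\am p + (\am + 1) q < 2 + \am$, which is precisely where this hypothesis enters. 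Finally, $\theta > k - N$ follows by combining the same inequality with $-2 - 2\am \geq k - N$, i.e.\ $\am \leq (N-k-2)/2 = H$, which is automatic.

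With all hypotheses validated, Proposition \ref{t2.1} delivers the equivalence of statements 1--4 directly. I do not anticipate a genuine obstacle beyond this parameter bookkeeping; the main conceptual point is recognizing that the constraint $\am p + (\am + 1)q < 2 + \am$ is precisely what makes the weight $d^{p+1} d_\xS^{-\am(p+1) - (\am + 1)q}$ locally integrable and compatible with the abstract Kalton--Verbitsky framework, so that no independent analysis of the nonlinear integral equation is required.
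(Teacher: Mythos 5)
Your proposal is correct and matches the paper's proof exactly: the paper also extends $\nu$ by zero, applies Proposition \ref{t2.1} with $J = \CN_{2\am+1,1}$ and $d\omega = d^{p+1}d_\Sigma^{-\am(p+1)-(\am+1)q}\,dx$, and invokes Lemmas \ref{ineq}, \ref{vol}, and \ref{vol-2}, with the remark after the theorem recording the same observation that $\am p + (\am+1)q < 2 + \am$ is exactly $\theta > -2-2\am = \max\{k-N,-1-\alpha\}$. Your parameter bookkeeping (including the check that $2\am+1 < N-1$ both for $k=0$ and $k\geq 1$) is sound and slightly more explicit than the paper's.
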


\begin{remark}
Condition $\am p+(\am+1)q<2+\am$ is required to ensure that $-\am(p+1)-(\am+1)q > \max\{k-N,-2-2\am\}=-2-2\am$, which in turn allows to apply Lemmas \ref{vol} and \ref{vol-2}.	
\end{remark}

Now we are ready to give

\begin{proof}[{\textbf{Proof of Theorem \ref{existth}}}]
Define 
\bal
{\mathscr V}:=\{v \in W_{\mathrm{loc}}^{1,1}(\Omega) \text{ such that }  \|v\|_{{\mathscr V}}:=\| v \|_{L^{p+q}(\Gw,d^{-q+1}d_\xS^{-\am-q})} + \|  \nabla v \|_{L^{p+q}(\Gw,d^{p+1}d_\xS^{p-\am})}<+\infty\}.
	\eal
For $u \in{\mathscr V}$, put
	\bal
{\mathds H}[u](x):=\BBG_\mu[|u|^p|\nabla u|^q](x) + \BBK_\mu[\varrho \nu](x) \quad \text{for a.e. } x \in \Gw.
	\eal
From \eqref{Greenesta}, \eqref{Martinest1}, \eqref{gradientgreen} and \eqref{est:gradK-K}, we have 
\be\label{GN2a}\BAL
G_\mu(x,y)&\asymp d(x)d(y)(d_{\xS}(x)d_{\xS}(y))^{-\am} \CN_{2\am,2}(x,y) \\
&\lesssim d(x)d(y)(d_{\xS}(x)d_{\xS}(y))^{-\am} \CN_{2\am+1,1}(x,y) \quad \forall x,y \in \Omega \setminus \Sigma, x \neq y,
\EAL
\ee
\bal
|\nabla_x G_\mu(x,y)|\lesssim d_{\xS}(x)^{-\am-1}d(y)d_{\xS}(y)^{-\am} \CN_{2\am+1,1}(x,y) \quad \forall x,y \in \Omega \setminus \Sigma, x \neq y,
\eal
\ba \label{KN2a1}
K_\mu(x,z)\asymp d(x)d_{\xS}(x)^{-\am} \CN_{2\am+1,1}(x,z) \quad \forall x \in \Omega \setminus \Sigma, z \in \partial\xO\cup\Sigma,
\ea
and
\ba \label{KN2a1gran}
|\nabla_x K_\mu(x,z)|\lesssim d_{\xS}(x)^{-\am-1} \CN_{2\am+1,1}(x,z) \quad \forall x \in \Omega \setminus \Sigma, z \in \partial\xO\cup\Sigma.
\ea

From \eqref{GN2a}-\eqref{KN2a1gran}, we obtain
	\bal
	|{\mathds H}[u](x)| &\leq C_1d(x) d_{\xS}(x)^{-\am} \BBN_{2\am+1,1}[d d_{\xS}^{-\am}|u|^p|\nabla u|^q](x) + C_1 d(x)d_{\xS}(x)^{-\am} \BBN_{2\am+1,1}[\varrho \nu](x), \\
	|\nabla {\mathds H}[u](x)| &\leq C_1  d_{\xS}(x)^{-\am-1} \BBN_{2\am+1,1}[d d_{\xS}^\am  |u|^p|\nabla u|^q](x) + C_1 d_{\xS}(x)^{-\am-1}  \BBN_{2\am+1,1}[\varrho \nu](x).
	\eal

	Put
	\bal
{\mathscr F}:=\{ u \in W_{\mathrm{loc}}^{1,1}(\Omega): |u| \leq  2C_1 d(x)d_{\xS}(x)^{-\am} \BBN_{2\am+1,1}[\varrho \nu], \;\; |\nabla u| \leq 2C_1d_{\xS}(x)^{-\am-1}  \BBN_{2\am+1,1}[\varrho \nu]   \}.
	\eal
	
Since condition \eqref{con3-a} holds, it follows from Theorem \ref{sourceth} that \eqref{con1} and \eqref{con2} also hold.  In addition, from \eqref{con1} we have  ${\mathds H}({\mathscr F})\subset {\mathscr V}$ and from \eqref{con2} we infer that there exists $\varrho_0=\varrho_0(p,q,C_1,C)>0$ such that if $\varrho \in (0,\varrho_0)$ then ${\mathds H}({\mathscr F}) \subset {\mathscr F}$.
	
We see that ${\mathscr F}$ is convex and closed under the strong topology of ${\mathscr V}$. Moreover, it can be justified that ${\mathds H}$ is a continuous and compact operator. Therefore, by invoking Schauder the fixed point theorem, we conclude that there exists $u \in {\mathscr F}$ such that ${\mathds H}[u]=u$. Therefore, $u\in {\mathscr F}$ is a weak solution of problem \eqref{sourceprobrho}.
\end{proof}

\begin{proof}[{\textbf{Proof of proposition \ref{th:existence-source-3}}}] In view of Theorem \ref{sourceth} and Theorem \ref{existth}, it is sufficient to prove that \eqref{con1} holds. 
Let $E\subset\overline{\xO}$ be a Borel set. By \eqref{KN2a1}, we have
\ba\BAL\label{1}
\int_E \BBN_{2\am+1,1}[\1_E\xn]^{p+q} d^{p+1}d_{\xS}^{-\am(p+1)-(\am+1)q}\dx
&\asymp \int_E dd_{\xS}^{-\am}\left((dd_{\xS})^{-\frac{q}{p+q}}\mathbb{K}_\mu[\1_E\xn]\right)^{p+q}\,\dx\\
&\leq \int_\xO dd_{\xS}^{-\am}\left((dd_{\xS})^{-\frac{q}{p+q}}\mathbb{K}_\mu[\1_E\xn]\right)^{p+q}\,\dx.
\EAL
\ea

(i) Assume $\nu$ is concentrated on $\Sigma$. Using estimate \eqref{estgranmartin2} with $\xg=\frac{q}{p+q}$, we derive
\bal 
\int_\xO dd_{\xS}^{-\am}\left((dd_{\xS})^{-\frac{q}{p+q}}\mathbb{K}_\mu[\1_E\xn]\right)^{p+q}\,\dx\leq C(\xO,\xS,\xm,p,q) \xn(E \cap \Sigma)^{p+q}\leq C\xn(\Sigma)^{p+q-1}\xn(E).
\eal
This and \eqref{1} imply the desired result.

(ii) Assume $\nu$ is concentrated on $\partial \Omega$. Thanks to a similar argument as in (i) and using estimate \eqref{estgranmartin1}, we obtain the desired result. We leave the  detail to the reader.
\end{proof}

Similarly we may prove the following proposition.

\begin{proposition}
Let $\xm<\frac{(N-2)^2}{4}$ $p,q\geq0$, $p+q>1$, 
$\am p+(\am+1)q<2+\am$ and $\xn\in\mathfrak{M}^+(\partial\xO\cup\xS)$. Assume that one of the following conditions holds

(i) $\am\leq-1$ and $(N-\am-2)p+(N-\am-1)q<N-\am$,

(ii) If $\am> -1$ and $(N-1)p+Nq<N+1$.

\noindent Then for $\varrho>0$ small enough, problem \eqref{problem:source-power} admits a nonnegative weak solution.	
\end{proposition}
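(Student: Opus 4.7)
The strategy mirrors the proof of Proposition \ref{th:existence-source-3}: in view of Theorem \ref{existth} it suffices to verify hypothesis \eqref{con3-a}, and by Theorem \ref{sourceth} this reduces to establishing the integral condition \eqref{con1} for the given measure $\nu$. The new element here is that $\nu$ is no longer assumed to be compactly supported on only one of $\Sigma$ or $\partial\Omega$; to handle this I would decompose $\nu = \nu_{\partial\Omega} + \nu_\Sigma$ with $\nu_{\partial\Omega}:=\nu|_{\partial\Omega}$ and $\nu_\Sigma:=\nu|_\Sigma$, and treat the two pieces separately using the appropriate weak Lebesgue estimates for the Martin operator.

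Arguing exactly as in the chain of estimates \eqref{1} from Proposition \ref{th:existence-source-3}, together with the elementary inequality $(a+b)^{p+q}\lesssim a^{p+q}+b^{p+q}$, the left-hand side of \eqref{con1} is controlled by
\bal
\int_\xO d\,d_\xS^{-\am}\bigl((d\,d_\xS)^{-\frac{q}{p+q}}\BBK_\mu[\1_E\nu_{\partial\Omega}]\bigr)^{p+q}\dx + \int_\xO d\,d_\xS^{-\am}\bigl((d\,d_\xS)^{-\frac{q}{p+q}}\BBK_\mu[\1_E\nu_{\Sigma}]\bigr)^{p+q}\dx.
\eal
Since $\phi_\mu\asymp d\,d_\xS^{-\am}$ and the measure $\phi_\mu\dx$ is finite on $\Omega$, the standard embedding $L_w^r(\Omega;\phi_\mu)\hookrightarrow L^{p+q}(\Omega;\phi_\mu)$ (valid for any $r>p+q$) applied with $\gamma=q/(p+q)$ reduces matters to the weak Lebesgue bounds provided by Theorem \ref{lpweakgranmartin1}. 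Specifically, the first integral is handled by Theorem \ref{lpweakgranmartin1}(I) with exponent $r_1=\tfrac{N+1}{N-1+\gamma}$, and the second by Theorem \ref{lpweakgranmartin1}(II)(i) with exponent $r_2=\tfrac{N-\am}{N-\am-2+\gamma}$. These yield bounds by $\nu_{\partial\Omega}(E)^{p+q}$ and $\nu_\Sigma(E)^{p+q}$ respectively; the estimate $\nu_*(E)^{p+q}\leq \nu(\partial\Omega\cup\Sigma)^{p+q-1}\nu(E)$ (for $*\in\{\partial\Omega,\Sigma\}$), together with finiteness of $\nu$, then delivers \eqref{con1}.

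The key technical point is that this scheme simultaneously requires $r_1>p+q$ and $r_2>p+q$, equivalent to
\bel{plan:two-conds}
(N-1)p+Nq<N+1\quad\text{and}\quad (N-\am-2)p+(N-\am-1)q<N-\am.
\ee
Setting $A:=(N-\am-2)p+(N-\am-1)q-(N-\am)$ and $B:=(N-1)p+Nq-(N+1)$, a direct computation gives the identity
\bal
A=B+(-\am-1)(p+q-1).
\eal
In case (i), $-\am-1\geq 0$ and $p+q-1>0$, so $B\leq A$; since condition (i) reads $A<0$, both $A<0$ and $B<0$ follow. In case (ii), $-\am-1<0$ and $p+q-1>0$, so $A<B$; since condition (ii) reads $B<0$, both $B<0$ and $A<0$ follow. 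In either case, \eqref{plan:two-conds} is satisfied, the argument above goes through, and Theorem \ref{existth} produces the desired nonnegative weak solution for $\varrho>0$ small. The main subtlety lies precisely in this dichotomy: the sign of $\am+1$ dictates which of the $\partial\Omega$ and $\Sigma$ conditions is the binding constraint, and the algebraic identity for $A-B$ is what allows (i) and (ii) to be stated as separate sufficient hypotheses without imposing both inequalities simultaneously.
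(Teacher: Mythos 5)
Your proposal is correct and follows exactly the route the paper intends: the paper omits the proof of this proposition with the one-line remark ``Similarly we may prove the following proposition,'' deferring to the argument of Proposition \ref{th:existence-source-3}. Your decomposition $\nu=\nu|_{\partial\Omega}+\nu|_{\Sigma}$ together with the algebraic identity $A-B=(-\am-1)(p+q-1)$ supplies precisely the missing detail that explains why (i) and (ii) can be stated as alternatives (each implies both of the inequalities $r_1>p+q$ and $r_2>p+q$ needed for the two weak-Lebesgue embeddings), rather than requiring both conditions simultaneously.
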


In the sequel we will study when condition \eqref{con3-a} is valid in terms of Bessel capacities. For $\theta \in\BBR$ we define the Bessel kernel of order $\theta$ in $\R^d$ by
\bal
\CB_{d,\theta}(\xi):=\CF^{-1}\left((1+|.|^2)^{-\frac{\theta}{2}}\right)(\xi),
\eal
where $\CF$ is the Fourier transform in the space $\mathcal{S}'(\R^d)$ of moderate distributions in $\BBR^d$. Set
\bal
\BBB_{d,\theta}[\lambda](x):= \int_{\R^d}\CB_{d,\theta}(x-y) \dd\lambda(y), \quad x \in \R^d, \quad \lambda \in \GTM(\R^d).
\eal
Let
$L_{\theta,\kappa}(\BBR^d):=\{f=\CB_{d,\theta} \ast g:g\in L^{\kappa}(\BBR^d)\}
$ be the Bessel space with the norm
\bal
\|f\|_{L_{\theta,\kappa}}:=\|g\|_{L^\kappa}=\|\CB_{d,-\theta}\ast f\|_{L^\kappa}.
\eal

The Bessel capacity is defined as follows.

\begin{definition}
 Let $1< \kappa <\infty$ and $E\subset\BBR^d.$ Set
 \bal
 \mathcal{S}_E:=\{g\in L^\kappa(\BBR^d):\;g\geq0,\;(\CB_{d,\theta} \ast g)(x)\geq 1\;\;\text{for any}\;x\in E\}.
 \eal
Then
\bal
\mathrm{Cap}_{{\CB_{d,\theta},\kappa}}^{\R^d}(E):=\inf\{\|g\|^\kappa_{L^\kappa(\BBR^d)}:\; g\in \mathcal{S}_E \}.
\eal
If $\mathcal{S}_E=\emptyset,$ we set $\mathrm{Cap}_{{\CB_{d,\theta},\kappa}}^{\R^d}(E)=\infty.$
\end{definition}

 If $\Gamma \subset \overline{\Omega}$ is a $C^2$ submanifold without boundary, of dimension $d$ with $1 \leq d \leq N-1$ then there exist open sets $O_1,...,O_m$ in $\BBR^N$, diffeomorphism $T_i: O_i \to B^{d}(0,1)\times B^{N-d}(0,1) $ and compact sets $K_1,...,K_m$ in $\Gamma$ such that

(i) $K_i \sbs O_i$, $1 \leq i \leq m$ and $ \Gamma= \cup_{i=1}^m K_i$,

(ii) $T_i(O_i \cap \Gamma)=B_1^{d}(0) \times \{ x'' = 0_{\mathbb{R}^{N-d}} \}$, $T_i(O_i \cap \Gw)=B_1^{d}(0)\times B_1^{N-d}(0)$,

(iii) For any $x \in O_i \cap (\xO\setminus \Gamma)$, there exists $y \in O_i \cap  \xS$ such that $d_\Gamma(x)=|x-y|$ (here $d_\Gamma(x)$ denotes the distance from $x$ to $\Gamma$). \smallskip

We then define the $\mathrm{Cap}_{\gth,s}^{\Gamma}-$capacity of a compact set $E \sbs \Gamma$ by
\bel{Capsub} \mathrm{Cap}_{\gth,\kappa}^{\Gamma}(E):=\sum_{i=1}^m \mathrm{Cap}_{\CB_{d,\gth},\kappa}^{\mathbb{R}^d}(\tl T_i(E \cap K_i)), \ee
where $T_i(E \cap K_i)=\tl T_i(E \cap K_i) \times  \{ x'' = 0_{\mathbb{R}^{N-d}} \}$. We remark that the definition of the capacities does not depends on $O_i$.

Note that if $\theta \kappa > d$ then
$\mathrm{Cap}_{\gth,\kappa}^{\Gamma}(\{z\})>C>0$ for all $z \in \Gamma$.

\begin{lemma}\label{besov}Let $k\geq1,$ $p,q\geq0$ be such that $p+q>1$. Assume \eqref{3condition} holds. Let  $\xn\in \mathfrak{M}^+(\mathbb{R}^k)$ with compact support in $B^k(0,\frac{R}{2})$ for some $R>0$ and $\vartheta$ be as in \eqref{vartheta}.
	For $x \in \mathbb{R}^{k+1}$, we write $x=(x_1,x') \in \mathbb{R} \times \mathbb{R}^{k}$.
	Then
	\ba  \nonumber
	&\int_{B^k(0,R)}\int_{0}^R x_{1}^{N-k-1-\am(p+1)-(\am+1)q}\left(\int_{B^k(0,R)}\left(|x_1|+|x'-y'|\right)^{-(N-2\am-2)}\dd \nu(y')\right)^{p+q}\dd x_1\dd x'\\ 
	&\asymp \int_{\mathbb{R}^k}\BB_{k,\vartheta}[\xn](x')^{p+q}\, \dd x'. \label{est-Bnu0}
	\ea
	Here the implicit constants depend only on $R,N,k,\mu,p$, and 
	\bal \BBB_{k,\vartheta}[\xn](x') :=\int_{\mathbb{R}^k}\CB_{k,\vartheta}(x'-y')\, \dd \nu(y').
	\eal
\end{lemma}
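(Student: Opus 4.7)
Set $a := N-k-1-\am(p+1)-(\am+1)q$, $b := N-2\am-2$, and $F(x_1,x') := \int_{B^k(0,R)}(x_1+|x'-y'|)^{-b}\,d\nu(y')$, so that the left-hand side of \eqref{est-Bnu0} reads $\int_{B^k(0,R)}\int_0^R x_1^a F(x_1,x')^{p+q}\,dx_1\,dx'$. Using $2H=N-k-2$ and $\ap+\am=2H$, a direct algebraic computation yields
\[
a+1=(N-k-\am)-[\am p+(\am+1)q],\quad b(p+q)-(a+1)=(N-\am-2)p+(N-\am-1)q-(N-k-\am),
\]
and the key arithmetic bridge $\tfrac{a+1}{p+q}-b=-(k-\vartheta)$. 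The three conditions in \eqref{3condition} translate respectively into $a>-1$, $b(p+q)>a+1$, and, via $\vartheta(p+q)=2+\ap-\ap p-(\ap+1)q$ combined with the third inequality, $0<\vartheta<k$. I will prove the two directions of $\asymp$ separately.

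\textbf{Upper bound.} For each fixed $x'$, Minkowski's integral inequality in $x_1$ (valid since $p+q\geq 1$) gives
\[
\Bigl(\int_0^R x_1^a F^{p+q}\,dx_1\Bigr)^{\tfrac{1}{p+q}}\leq\int_{B^k(0,R)}\Bigl(\int_0^R x_1^a(x_1+|x'-y'|)^{-b(p+q)}\,dx_1\Bigr)^{\tfrac{1}{p+q}}\,d\nu(y').
\]
The rescaling $u=x_1/|x'-y'|$ reduces the inner integral to a convergent Beta-type integral (convergent thanks to $a>-1$ and $b(p+q)>a+1$), producing the bound $|x'-y'|^{(a+1)/(p+q)-b}=|x'-y'|^{-(k-\vartheta)}$ up to a constant depending on $R$ (since $|x'-y'|\leq 3R/2$ on the support). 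Raising to the $(p+q)$-th power and integrating in $x'$, then using $\CB_{k,\vartheta}(z)\asymp|z|^{-(k-\vartheta)}$ for bounded $|z|$ (which is available because $0<\vartheta<k$) together with the exponential decay of $\CB_{k,\vartheta}$ outside a ball, yields $\text{LHS}\lesssim\int_{\R^k}\BBB_{k,\vartheta}[\nu]^{p+q}\,dx'$.

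\textbf{Lower bound.} Set $r_j:=R\cdot 2^{-j}$. For $x'\in B^k(0,R/4)$ one has $B(x',r_0)\supset\mathrm{supp}\,\nu$; moreover, for $x_1\in(r_{j+1},r_j]$ and $y'\in B(x',r_j)$, one has $x_1+|x'-y'|\leq 2r_j$, so $F(x_1,x')\gtrsim r_j^{-b}\,\nu(B(x',r_j))$. Raising to the $(p+q)$-th power, multiplying by $x_1^a$, integrating over the dyadic interval $(r_{j+1},r_j]$, summing in $j$, and using $a+1-b(p+q)=-(k-\vartheta)(p+q)$, one obtains the pointwise estimate
\[
\int_0^R x_1^a F^{p+q}\,dx_1\gtrsim \sum_{j\geq 0} r_j^{-(k-\vartheta)(p+q)}\,\nu(B(x',r_j))^{p+q}.
\]
It remains to show that the integral of this dyadic nonlinear functional over $x'\in B^k(0,R/4)$ dominates $\int_{\R^k}\BBB_{k,\vartheta}[\nu]^{p+q}\,dx'$. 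The contribution from $x'\notin B^k(0,R/4)$ is handled separately via the exponential decay of $\CB_{k,\vartheta}$ together with the trivial lower bound $\nu(\R^k)^{p+q}\lesssim \text{LHS}$ (obtained by choosing $x'\in\mathrm{supp}\,\nu$ and $x_1\asymp R$).

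\textbf{Main obstacle.} The upper bound is a direct application of Minkowski's inequality followed by a one-variable integral computation. The lower bound is the core technical challenge: because $p+q>1$ and the $(p+q)$-th power sits inside the dyadic sum, no pointwise comparison with $(\BBB_{k,\vartheta}[\nu])^{p+q}$ is available — the desired comparison can only hold after integration in $x'$. Proving this integrated estimate requires a nonlinear potential-theoretic inequality of Muckenhoupt–Wheeden / Hedberg–Wolff type, tailored to the exponents $\vartheta$ and $p+q$ at hand; this is the step that demands most care. A secondary technical point is the passage between the Riesz kernel $|\cdot|^{-(k-\vartheta)}$ and the Bessel kernel $\CB_{k,\vartheta}$ globally on $\R^k$, which is managed via the exponential decay of $\CB_{k,\vartheta}$ outside bounded sets.
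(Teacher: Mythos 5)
Your upper bound is correct but takes a genuinely different route from the paper. You apply Minkowski's integral inequality in the $x_1$-variable to move the $(p+q)$-th power outside the $\nu$-integral, then compute the one-dimensional Beta-type integral to arrive directly at the Riesz kernel $|x'-y'|^{-(k-\vartheta)}$, and finish by comparing Riesz and Bessel kernels on bounded sets. The paper instead uses the layered-cake formula $\int(x_1+|x'-y'|)^{-b}\,d\nu(y')\lesssim\int_{x_1}^{4R}\nu(B^k(x',r))r^{-b-1}\,dr$, an auxiliary $\varepsilon$-H\"older step to separate the $x_1$- and $r$-integrals via Fubini, and a dyadic argument to push the power $(p+q)$ outside; it then cites a theorem in the reference [BNV] to pass to the Bessel potential. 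Your Minkowski route is cleaner and bypasses the auxiliary $\varepsilon$ entirely; the conclusion agrees, provided you verify convergence of the Beta integral (which your conditions $a>-1$, $b(p+q)>a+1$ do guarantee) and the uniform comparability $\int_0^{R/c}u^a(1+u)^{-b(p+q)}\,du\asymp 1$ for $c\in(0,3R/2]$. Your algebraic identity $a+1-b(p+q)=-(k-\vartheta)(p+q)$ checks out using $\am+\ap=N-k-2$.

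For the lower bound, however, there is a genuine gap. You correctly derive the pointwise estimate $\int_0^R x_1^a F^{p+q}\,dx_1\gtrsim\sum_{j\geq0}r_j^{-(k-\vartheta)(p+q)}\nu(B^k(x',r_j))^{p+q}$, and you correctly diagnose that comparing the integral of this dyadic functional to $\int(\BBB_{k,\vartheta}[\nu])^{p+q}$ requires a Muckenhoupt--Wheeden / Wolff-type nonlinear potential inequality — but you then stop, stating this is "the step that demands most care" without carrying it out. This is precisely where the paper's proof does its work: it observes that the dyadic sum dominates the $(p+q)$-th power of the fractional maximal function $\sup_{0<r<R/2}\nu(B^k(x',r))/r^{k-\vartheta}$, and then invokes a theorem from the reference [BNV] (a Muckenhoupt--Wheeden equivalence $\|M_\vartheta\nu\|_{L^s}\asymp\|I_\vartheta\nu\|_{L^s}$ valid for $s>1$, usable here since $p+q>1$) to convert the maximal-function integral into the Bessel-potential integral. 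Without supplying this inequality (by proof or citation), your lower bound is not complete. You should also note that the paper uses the same maximal-function reduction at a slightly different point: it discards the dyadic sum in favour of a single supremum, whereas you retain the full sum; both are fine, but either way the Muckenhoupt--Wheeden step is unavoidable and must be justified.
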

\begin{proof}
The proof is inspired by the idea in \cite[Proposition 2.8]{BHV} and is split into two steps.

\noindent \textbf{Step 1:} We will prove the upper bound in \eqref{est-Bnu0}. Let $0<x_1<R$ and $|x'|<R$. In view of the proof of \cite[Lemma 3.1.1]{Ad}, we obtain
	\bal \BAL
	&\int_{B^k(0,R)}\left(x_1+|x'-y'|\right)^{-(N-2\am-2)}\,\dd \nu(y')\leq \int_{B^k(x',2R)}\left(x_1+|x'-y'|\right)^{-(N-2\am-2)}\,\dd \nu(y')\\
	&=(N-2\am-2)\left(\int_0^{2R}\frac{\xn(B^k(x',r))}{(x_1+r)^{N-2\am-2}}\frac{\dd r}{x_1+r}+\frac{\xn(B^k(x',2R))}{(x_1+2R)^{N-2\am-2}}\right)\\
	&\lesssim \int_0^{3R}\frac{\xn(B^k(x',r))}{(x_1+r)^{N-2\am-2}}\frac{\dd r}{x_1+r}
	\leq \int_{x_1}^{4R}\frac{\xn(B^k(x',r))}{r^{N-2\am-2}}\frac{\dd r}{r}.
	\EAL
	\eal
Set $\xb=\am(p+1)+(\am+1)q$. We infer from the above estimate that
	\bal
	&\int_{0}^R x_{1}^{N-k-1-\xb}\left(\int_{B^k(0,R)}\left(|x_1|+|x'-y'|\right)^{-(N-2\am-2)}\dd \nu(y')\right)^{p+q}\, \dd x_1\\
	&\lesssim \int_{0}^R x_{1}^{N-k-1-\xb}\left(\int_{x_1}^{4R}\frac{\xn(B^k(x',r))}{r^{N-2\am-2}}\frac{\dd r}{r}\right)^{p+q} \, \dd x_1.
	\eal
	Due to the first condition in \eqref{3condition}, $\beta<N-k$. Let $\xe$ be such that $0<\xe<N-k-\xb$. By H\"older's inequality and Fubini's theorem, we have
	\bal
	\int_{0}^R &x_{1}^{N-k-1-\xb}\left(\int_{x_1}^{4R}\frac{\xn(B^k(x',r))}{r^{N-2\am-2}}\frac{\dd r}{r}\right)^{p+q} \dd x_1\\
	&\leq \int_{0}^R x_{1}^{N-k-1-\xb}\left(\int_{x_1}^\infty r^{-\frac{\xe (p+q)'}{p+q}}\frac{\dd r}{r}\right)^{\frac{p+q}{(p+q)'}}\int_{x_1}^{4R}\left(\frac{\xn(B^k(x',r))}{r^{N-2\am-2-\frac{\xe}{p+q}}}\right)^{p+q}\frac{\dd r}{r}\, \dd x_1\\
	&=C(p,q,\xe)\int_{0}^R x_{1}^{N-k-1-\xb-\xe}\int_{x_1}^{4R}\left(\frac{\xn(B^k(x',r))}{r^{N-2\am-2-\frac{\xe}{p+q}}}\right)^{p+q}\frac{\dd r}{r}\, \dd x_1\\
	&\leq C(p,q,\xe,N,k,\am,R)\int_0^{4 R} \left(\frac{\xn(B^k(x',r))}{r^{N-2\am-2-\frac{N-k-\xb}{p+q}}}\right)^{p+q}\frac{\dd r}{r}.
	\eal
	By the second and the first condition in \eqref{3condition} and the definition of $\vartheta$ in \eqref{vartheta}, we see that $0 < \vartheta <k$. Keeping in my that
\bal
	N-2\am-2-\frac{N-k-\xb}{p+q} = k - \vartheta,
\eal
we have
	\bal
	\int_0^{4 R} \left(\frac{\xn(B^k(x',r))}{r^{k-\vartheta}}\right)^{p+q}\frac{\dd r}{r}
	&=\sum_{n=0}^\infty\int_{2^{-n+1}R}^{2^{-n+2}R}\left(\frac{\xn(B^k(x',r))}{r^{k-\vartheta}}\right)^{p+q}\frac{\dd r}{r}\\ 
	&\leq \ln2\left( \sum_{n=0}^\infty 2^{(n-1)(k-\vartheta)}\frac{\xn(B^k(x',2^{-n+2}R))}{R^{k- \vartheta}}\right)^{p+q}\\
	&\leq  2^{(p+q)(k-\vartheta)}(\ln2)^{-(p+q-1)}\left(\int_0^{8 R}\frac{\xn(B^k(x',r))}{r^{k-\vartheta}}\frac{\dd r}{r}\right)^{p+q}.
	\eal
	This and \cite[Theorem 2.3]{BNV} (note that conditions on $p,q$ allow to use \cite[Theorem 2.3]{BNV}) imply
	\bal 
	&\int_{B^k(0,R)}\int_{0}^R x_{1}^{N-k-1-\xb\am}\left(\int_{B^k(0,R)}\left(x_1+|x'-y'|\right)^{-(N-2\am-2)}\, \dd \nu(y')\right)^{p+q}\, \dd x_1\, \dd x'\\
	&\lesssim \int_{\mathbb{R}^k}\BB_{k,\vartheta}[\xn](x')^{p+q}\, \dd x',
	\eal
which leads to the upper bound in \eqref{est-Bnu0}. \medskip

\noindent \textbf{Step 2:} We will prove the lower bound in \eqref{est-Bnu0}. Let $0<x_1<R$ and $|x'|<R$. Then by \cite[Lemma 3.1.1]{Ad}, we have
	\bal
	\int_{B^k(0,R)}\left(x_1+|x'-y'|\right)^{-(N-2\am-2)}\,\dd \nu(y')
	&=
	(N-2\am-2)\int_{x_1}^{\infty}\frac{\xn(B^k(x',r-x_1))}{r^{N-2\am-2}}\frac{\dd r}{r}\\
	&\geq C(N,\am)\int_{x_1}^{\infty}\frac{\xn(B^k(x',r))}{r^{N-2\am-2}}\frac{\dd r}{r}.
	\eal
	It follows that
	\bal
	&\int_{0}^R x_{1}^{N-k-1-\xb}\left(\int_{B^k(0,R)}\left(x_1+|x'-y'|\right)^{-(N-2\am-2)}\dd \nu(y')\right)^{p+q} \, \dd x_1\\
	&\gtrsim \int_{0}^R x_{1}^{N-k-1-\xb}\left(\int_{x_1}^\infty\frac{\xn(B^k(x',r))}{r^{N-2\am-2}}\frac{\dd r}{r}\right)^{p+q} \, \dd x_1\\
	&\gtrsim \int_{0}^R\left(\frac{\xn(B^k(x',x_1))}{x_1^{k-\vartheta}}\right)^{p+q}\frac{\dd x_1}{x_1}.
	\eal
	For $0<r<\frac{R}{2}$, we obtain
	\bal
	\int_{0}^R\left(\frac{\xn(B^k(x',x_1))}{x_1^{k-\vartheta}}\right)^{p+q}\frac{\dd x_1}{x_1} \geq \int_{r}^{2r}\left(\frac{\xn(B^k(x',x_1))}{x_1^{k-\vartheta}}\right)^{p+q}\frac{\dd x_1}{x_1} \gtrsim \left(\frac{\xn(B^k(x',r))}{r^{k-\vartheta}}\right)^{p+q},
	\eal
	which implies
	\bal
	\int_{0}^R\left(\frac{\xn(B^k(x',x_1))}{x_1^{k-\vartheta}}\right)^{p+q}\frac{\dd x_1}{x_1}
	\gtrsim \left(\sup_{0<r<\frac{R}{2}}\frac{\xn(B^k(x',r))}{r^{k-\vartheta}}\right)^{p+q}.
	\eal
	This, together with the assumption that $\xn$ has compact support in $B(0,\frac{R}{2})$ and \cite[Theorem 2.3]{BNV}, yields
	\bal
	&\int_{B^k(0,R)}\int_{0}^R x_{1}^{N-k-1-\xb}\left(\int_{B^k(0,R)}\left(x_1+|x'-y'|\right)^{-(N-2\am-2)}\,\dd \nu(y')\right)^{p+q} \, \dd x_1\, \dd x'\\
	&\gtrsim 
	\int_{\mathbb{R}^k}\left(\sup_{0<r<\frac{R}{2}}\frac{\xn(B^k(x',r))}{r^{k-\vartheta}}\right)^{p+q}\, \dd x' \gtrsim \int_{\mathbb{R}^k}\BB_{k,\vartheta}[\xn](x')^{p+q} \, \dd x',
	\eal
which implies the lower bound in \eqref{est-Bnu0}.
\end{proof}

\begin{proof}[{\textbf{Proof of Theorem \ref{th:existence-source-4}}}] By virtue of Theorem \ref{existth}, it is sufficient to show that
\ba\label{Cap-equi-1}
\mathrm{Cap}_{\BBN_{2\am+1,1},(p+q)'}^{p+1,-\am(p+1)-(\am+1)q}(E)\asymp\mathrm{Cap}_{\vartheta,(p+q)'}^{\xS}(E)\quad\text{for any Borel set}\;E\subset\xS,
\ea	
where $\vartheta$ is defined in \eqref{vartheta}. From \eqref{Capsub}, we see that
\bal
\mathrm{Cap}_{\vartheta,p'}^{\xS}(E):=\sum_{i=1}^m \mathrm{Cap}_{\CB_{k,\vartheta},p'}^{\mathbb{R}^k}(\tl T_i(E \cap K_i)),
\eal
where $T_i(E \cap K_i)=\tl T_i(E \cap K_i) \times  \{ x'' = 0_{\mathbb{R}^{N-k}} \}$. With $\beta=(p+1)\am+q(\am+1)$, we have
\bal
\mathrm{Cap}_{\BBN_{2\am+1,1},(p+q)'}^{p+1,-\xb}(E)\asymp\sum_{i=1}^m\mathrm{Cap}_{\BBN_{2\am+1,1},(p+q)'}^{p+1,-\xb}(E\cap K_i).
\eal
Therefore, in order to prove \eqref{Cap-equi-1}, it's enough to show that
\ba \label{Cap-split} \mathrm{Cap}_{\CB_{k,\vartheta},(p+q)'}^{\mathbb{R}^k}(\tl T_i(E \cap K_i))\asymp \mathrm{Cap}_{\BBN_{2\am+1,1},(p+q)'}^{p+1,-\xb}(E \cap K_i), \quad i=1,2,\ldots,m.
\ea

Let $\xl \in \GTM^+(\partial\xO\cup \xS)$ with compact support on $\xS$ be such that $\BBK_\xm[\xl]\in L^p(\xO;\ei)$. Put $\lambda_{K_i} = \1_{K_i}\lambda$. On one hand, from \eqref{eigenfunctionestimates}, \eqref{Martinest1} and since $\xb<N-k$, we have
\bal
\int_{O_i}\BBK_\xm[\xl_{K_i}]^{p+q}(dd_\xS)^{-q}\ei \,\dx\gtrsim \lambda(K_i)^{p+q}\int_{O_i}d(x)^{p+1}d_\Sigma(x)^{-\xb} \,\dx\gtrsim\lambda(K_i)^{p+q}.
\eal
On the other hand,
\bal
\int_{\xO\setminus O_i}\BBK_\xm[\lambda_{K_i}]^{p+q}(dd_\xS)^{-q}\ei \dx\lesssim \lambda(K_i)^{p+q}\int_{\xO}d(x)^{p+1}d_\Sigma(x)^{-\xb}\dx\lesssim \lambda(K_i)^{p+q}.
\eal
Combining the above estimates, we derive
\ba\label{45}
\int_\xO\BBK_\xm[\lambda_{K_i}]^p (dd_\xS)^{-q}\ei \,\dx\asymp  \int_{O_i}\BBK_\xm[\lambda_{K_i}]^p(dd_\xS)^{-q}\ei \,\dx, \quad \forall i=1,2,..,m.
\ea

In view of the proof of \cite[Lemma 5.2.2]{Ad}, there exists a measure $\overline \lambda_i\in \GTM^+(\mathbb{R}^k)$ with compact support in $B^k(0,1)$ such that for any Borel $E\subset B^k(0,1)$, there holds
	\bal
\overline \lambda_i(E)=\lambda(T_i^{-1}(E\times \{0_{\R^{N-k}}\})).
\eal
Set $\psi=(\psi',\psi'')=T_i(x)$, we infer from \eqref{propdist}, \eqref{eigenfunctionestimates} and \eqref{Martinest1} that
	\bal
	&\ei(x)\asymp |\psi''|^{-\am},\\
	 &K_{\mu}(x,y)\asymp |\psi''|^{-\am}(|\psi''|+|\psi'-y'|)^{-(N-2\am-2)}, \quad
	 \forall x\in  O_i\setminus \Sigma,\;\forall y\in O_i\cap \Sigma.
	\eal
The above estimates, together with \eqref{45}, imply
\ba \label{46} \BAL
	&\int_{\Omega} \BBK_{\mu}[\lambda_{K_i}]^{p+q}(dd_\xS)^{-q}\ei \,\dx \asymp \int_{ O_i } \BBK_{\mu}[\lambda_{K_i}]^{p+q}(dd_\xS)^{-q}\ei \,\dx\\
	&\asymp \int_{B^k(0,1)}\int_{B^{N-k}(0,1)}|\psi''|^{-\xb}
	\left(\int_{B^k(0,1)}(|\psi''|+|\psi'-y'|)^{-(N-2\am-2)}\dd\overline{\lambda_i}(y')\right)^{p+q} \dd \psi'' \dd \psi'\\
	&=C(N,k)\int_{B^k(0,1)}\int_{0}^{\xb_0}r^{N-k-1-\xb}
	\left(\int_{B^k(0,1)}(r+|\psi'-y'|)^{-(N-2\am-2)}\dd\overline{\lambda_i}(y')\right)^{p+q} \dd r \dd\psi'.\\
    &\asymp \int_{\mathbb{R}^k} \BB_{k,\vartheta}[\overline \lambda_i](x')^p\dx'.
	\EAL
	\ea
Here the last estimate follows from \eqref{est-Bnu0}. From \eqref{KN2a1} and \eqref{46}, we deduce
\bal
\| \BBN_{2\am+1,1}[\xl_{K_i}]  \|_{L^{p+q}(\Omega; d^{p+1} d_\Sigma^{-\xb})}  \asymp \| \BBK_\xm[\xl_{K_i}] \|_{L^{p+q}(\Omega;(dd_\xS)^{-q}\phi_\mu)}  \asymp \| \BBB_{k,\vartheta}[\overline \lambda_i] \|_{L^{p+q}(\R^k)}.
\eal
This and \eqref{dualcap} lead to \eqref{Cap-split}, which in turn implies \eqref{Cap-equi-1}. The proof is complete.
\end{proof}

\begin{proof}[{\textbf{Proof of Theorem \ref{th:existence-source-5}}}]  By virtue of Theorem \ref{existth}, it is sufficient to show that
\ba\label{Cap-equi-2}
\mathrm{Cap}_{\BBN_{2\am+1,1},(p+q)'}^{p+1,-\am(p+1)-(\am+1)q}(E)\asymp\mathrm{Cap}_{\frac{2-q}{p+q},(p+q)'}^{\partial\xO}(E)\quad\text{for any Borel set}\;E\subset\partial\xO.
\ea	
	
Let $\beta=(p+1)\am+q(\am+1)$. By a similar argument as in the proof of \eqref{45}, by the assumptions on $p,q$, we can show that
for any $\lambda \in \GTM^+(\partial \Omega \cup \Sigma)$ with compact support on $\partial \Omega$, there holds
\bal
\int_\xO\BBK_\xm[\lambda]^{p+q}(dd_\xS)^{-q}\ei \dx\asymp \sum_{i=1}^m \int_{O_i}\BBK_\xm[ \1_{K_i} \lambda]^{p+q}(dd_\xS)^{-q}\ei \dx.
\eal
This and the estimate \eqref{KN2a1} imply
\bal \begin{aligned}
\int_{\Omega} \BBN_{2\am+1,1}[\lambda]^{p+q} d^{p+1} d_\Sigma^{-\xb}\dx & \asymp \sum_{i=1}^m \int_{O_i} \BBN_{2\am+1,1}[\1_{K_i} \lambda]^{p+q} d^{p+1} d_\Sigma^{-\xb} \dx \\
& \asymp \sum_{i=1}^m \int_{O_i} \BBN_{2\am+1,1}[\1_{K_i} \lambda]^{p+q} d^{p+1} \dx.
\end{aligned} \eal
Therefore, in view of the proof of \cite[Proposition 2.9]{BHV} (with $\xa=\xb=1$, $s=(p+q)'$ and $\xa_0=p+1$) and \eqref{dualcap}, we obtain \eqref{Cap-equi-2}. The proof is complete.
\end{proof}

\subsection{The case $\xS=\{0\}$ and $\xm=(\frac{N-2}{2})^2$} 
In this case $\xm= H^2=(\frac{N-2}{2})^2$, $\am=\frac{N-2}{2}$ and $\CN_{\alpha,2}$ defined in \eqref{Nas} becomes
\bal
\CN_{\alpha,2}(x,y):=\frac{\max\{|x-y|,|x|,|y|\}^{\alpha}}{|x-y|^{N-2}\max\{|x-y|,d(x),d(y)\}^2},\quad\forall(x,y)\in\overline{\xO}\times\overline{\xO}, x \neq y.
\eal

For $0<\xe<N-2$, we introduce some additional functions as follows
\bal
\CM_{\varepsilon}(x,y):=\frac{\max\{|x-y|,|x|,|y|\}^{N-2}+|x-y|^{N-2-\xe}}{|x-y|^{N-2}\max\{|x-y|,d(x),d(y)\}^2},\quad\forall(x,y)\in\overline{\xO}\times\overline{\xO}, x \neq y,	
\eal
\bal \begin{aligned}
G_{H^2,\xe}(x,y) &:= |x-y|^{2-N} \left(1 \wedge \frac{d(x)d(y)}{|x-y|^2}\right) \left(1 \wedge \frac{|x||y|}{|x-y|^2} \right)^{-\frac{N-2}{2}} \\
&\quad +(|x||y|)^{-\frac{N-2}{2}}|x-y|^{-\xe}  \left(1 \wedge \frac{d(x)d(y)}{|x-y|^2}\right), \quad x,y \in \Omega \setminus \{0\}, \, x \neq y,
\end{aligned} \eal
\bal
\tilde G_{H^2,\xe}(x,y):=d(x)d(y)(|x||y|)^{-\frac{N-2}{2}}  \CN_{N-2-\varepsilon,2}(x,y), \quad \forall x,y \in \Omega \setminus \{0\}, \, x \neq y.
\eal

We see that
\bal \begin{aligned}
(|x||y|)^{-\frac{N-2}{2}}\left|\ln\left(1 \wedge \frac{|x-y|^2}{d(x)d(y)}\right)\right|
&\leq(|x||y|)^{-\frac{N-2}{2}}\left|\ln\frac{|x-y|}{\mathcal{D}_\Omega}\right|\left(1 \wedge \frac{d(x)d(y)}{|x-y|^2}\right) \\
&\leq C(\Omega,\xe) (|x||y|)^{-\frac{N-2}{2}}|x-y|^{-\xe}  \left(1 \wedge \frac{d(x)d(y)}{|x-y|^2}\right),
\end{aligned}
\eal
which, combined  with \eqref{Greenestb}, yields
\bal
G_{H^2}(x,y)\lesssim G_{H^2,\xe}(x,y), \quad \forall x,y \in \Omega \setminus \{0\}, \, x \neq y.
\eal

Next, we also observe that
\bal
G_{H^2,\varepsilon}(x,y) \asymp d(x)d(y)(|x||y|)^{-\frac{N-2}{2}} \CM_{\varepsilon}(x,y), \quad x,y \in \Omega \setminus \{0\}, \, x \neq y,
\eal
\bal
\CM_{\xe}(x,y)\leq C(\xe,\xO) \CN_{N-2-\varepsilon,2}(x,y),\quad x,y \in \Omega \setminus \{0\}, \, x \neq y,
\eal
hence
\ba \label{GtildeG}
G_{H^2,\xe}(x,y)\lesssim \tilde G_{H^2,\xe}(x,y), \quad \forall x,y \in \Omega \setminus \{0\}, \, x \neq y.
\ea
By combining estimates \eqref{GtildeG}, \eqref{Martinest1-b}, \eqref{gradientgreen}, \eqref{est:gradK-K} and the estimate $$\CN_{N-2-\xe,2}(x,y) \lesssim \CN_{N-1-\xe,1}(x,y), \quad \forall x,y \in \Omega \setminus \{0\}, \, x \neq y,$$ 
we derive the following bounds
\bal \BAL
G_{H^2}(x,y)\lesssim d(x)d(y)(|x||y|)^{-\frac{N-2}{2}} \CN_{N-1-\xe,1}(x,y) \quad \forall x,y \in \Omega \setminus \Sigma, \, x \neq y,
\EAL \eal 
\bal 
|\nabla_x G_{H^2}(x,y)|\lesssim |x|^{-\frac{N}{2}}d(y)|y|^{-\frac{N-2}{2}} \CN_{N-1-\xe,1}(x,y) \quad \forall x,y \in \Omega \setminus \{0\}, \, x \neq y,
\eal
\bel{KN2a2}
K_{H^2}(x,z)\lesssim d(x)|x|^{-\frac{N-2}{2}} \CN_{N-1-\xe,1}(x,z) \quad \forall x \in \Omega \setminus \{0\}, \, z \in \partial\xO\cup \{0\},
\ee
and
\bal 
|\nabla_x K_{H^2}(x,z)|\lesssim |x|^{-\frac{N}{2}} \CN_{N-1-\xe,1}(x,z) \quad \forall x \in \Omega \setminus \{0\}, \, z \in \partial\xO\cup\{0\}.
\eal 
By applying Proposition \ref{t2.1} with $J(x,y)=\CN_{N-1-\xe,1}(x,y)$, $\dd \xo= d(x)^{p+1}|x|^{-\frac{N-2}{2}(p+1)-\frac{N}{2}q}\dx$ and $\lambda = \xn$, we obtain the following result

\begin{theorem}\label{sourceth2} Assume  $\xn\in\mathfrak{M}^+(\partial\xO\cup \{0\})$, $p \geq 0$, $q \geq 0$ such that $p+q>1$ and $(N-2)p+Nq<N+2$. Let $0<\varepsilon <\min(N-2,2,\frac{N+2- (N-2)p - Nq}{2(p+q)})$. Then the following statements are equivalent.
	
	1. For $\ell>0$, the following equation has a positive solution
\bal
v=\BBN_{N-1-\xe,1}[|v|^{p+q}d(\cdot)^{p+1} 
|\cdot|^{-\frac{N-2}{2}(p+1)-\frac{N}{2}q}]+\ell \BBN_{N-1-\xe,1}[\xn].
\eal
	
	2. For any Borel set $E \subset \overline{\xO}$, there holds
\ba\label{con1cr}
\int_E \BBN_{N-1-\xe,1}[\1_E\xn](x)^{p+q} d(x)^{p+1}|x|^{-\frac{N-2}{2}(p+1)-\frac{N}{2}q}\dx \leq C\, \xn(E).
\ea
	
	3. The following inequality holds
\bal
\BBN_{N-1-\xe,1}\bigg[\BBN_{N-1-\xe,1}[\xn]^{p+q}d(\cdot)^{p+1}|\cdot|^{-\frac{N-2}{2}(p+1)-\frac{N}{2}q}\bigg]\leq C\BBN_{N-1-\xe,1}[\xn]<\infty\quad a.e.
\eal

	4. For any Borel set $E \subset \overline{\xO}$, there holds
\bal 
\xn(E)\leq C\, \mathrm{Cap}_{\BBN_{N-1-\xe,1},(p+q)'}^{p+1,-\frac{N-2}{2}(p+1)-\frac{N}{2}q}(E).
\eal
\end{theorem}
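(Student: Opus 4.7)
The plan is to derive the four equivalences directly from the abstract Kalton--Verbitsky result (Proposition \ref{t2.1}) applied with the choices $J(x,y) = \CN_{N-1-\xe,1}(x,y)$, reference measure $\dd\xo = d(x)^{p+1}|x|^{-\frac{N-2}{2}(p+1)-\frac{N}{2}q}\1_{\Omega\setminus\{0\}}(x)\,\dx$, and $\gl = \xn$ (extended by zero off $\partial\Omega \cup \{0\}$). Under this dictionary, the exponent denoted $p$ in Proposition \ref{t2.1} plays the role of $p+q$ here, so each of (1)--(4) is a literal translation of the corresponding item in that proposition; hence the entire proof reduces to verifying the two structural hypotheses of Proposition \ref{t2.1}, namely (a) that $J^{-1}$ is a quasi-metric on $\overline{\Omega}$, and (b) that the measure $\xo$ satisfies the two doubling conditions \eqref{2.3}--\eqref{2.4}.

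Step (a) is immediate: Lemma \ref{ineq} asserts the quasi-metric inequality for $\CN_{\alpha,1}$ whenever $\alpha \leq N-1$, and here $\alpha = N-1-\xe \leq N-1$ thanks to $\xe > 0$. For step (b) I would invoke Lemma \ref{vol} and Lemma \ref{vol-2} with parameters $k = 0$ (since $\Sigma = \{0\}$), $\alpha = N-1-\xe$, $b = p+1$, and $\theta = -\frac{N-2}{2}(p+1)-\frac{N}{2}q$. Their hypotheses demand $\alpha < N-1$, $b > 0$, and $\theta > \max\{k-N,-1-\alpha\} = -N+\xe$; the first two are immediate from $\xe > 0$ and $p \geq 0$, and the third rearranges to $\xe < \frac{N+2-(N-2)p-Nq}{2}$, which is implied by the standing hypothesis $\xe < \frac{N+2-(N-2)p-Nq}{2(p+q)}$ because $p+q > 1$.

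The only subtle aspect, and the reason for introducing $\xe$ in the first place, is that in the critical case $\xm = H^2 = (\frac{N-2}{2})^2$ the natural unperturbed exponent would be $\alpha = 2\am+1 = N-1$, sitting exactly on the boundary where Lemma \ref{vol} ceases to apply; the perturbation $\xe$ shifts $\alpha$ strictly below $N-1$ while remaining compatible with the pointwise comparison $G_{H^2} \lesssim d(x)d(y)(|x||y|)^{-(N-2)/2}\CN_{N-1-\xe,1}$ derived just before the statement. The auxiliary upper bounds $\xe < N-2$ and $\xe < 2$ are precisely what is needed so that this comparison is legitimate (in particular that $\CN_{N-2-\xe,2} \lesssim \CN_{N-1-\xe,1}$ uniformly on the bounded domain $\Omega$ with $0 \in \Omega$). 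I expect no deeper obstacle: once both structural hypotheses of Proposition \ref{t2.1} are verified under the stated range of $\xe$, the four equivalences follow with no further work, in complete analogy with the subcritical Theorem \ref{sourceth}.
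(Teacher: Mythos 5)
Your proposal is correct and reconstructs exactly the paper's (implicit) argument: the paper simply applies Proposition \ref{t2.1} with $J=\CN_{N-1-\xe,1}$, $\dd\xo = d^{p+1}|\cdot|^{-\frac{N-2}{2}(p+1)-\frac{N}{2}q}\dx$ and $\gl=\xn$, and the verifications you supply --- quasi-metric property from Lemma \ref{ineq}, doubling conditions \eqref{2.3}--\eqref{2.4} from Lemmas \ref{vol} and \ref{vol-2}, with the parameter check $\theta = -\tfrac{N-2}{2}(p+1)-\tfrac{N}{2}q > -N+\xe$ following from $\xe < \tfrac{N+2-(N-2)p-Nq}{2(p+q)}$ and $p+q>1$ --- are precisely what makes that application legitimate. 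One small misattribution in your closing remark: the inequality $\CN_{N-2-\xe,2}\lesssim \CN_{N-1-\xe,1}$ holds for every $\xe>0$ (the relevant ratio is $|x-y|/(\max\{|x-y|,|x|,|y|\}\max\{|x-y|,d(x),d(y)\})$, which is bounded purely by the geometry of $\Omega\ni 0$, independently of $\xe$); the bound $\xe<N-2$ is instead what makes $\CM_\xe\lesssim\CN_{N-2-\xe,2}$ hold, and neither that estimate nor the constraint $\xe<2$ is actually used in the proof of the equivalences --- these constraints are carried in the hypotheses because they serve the pointwise Green/Martin kernel comparisons that feed into Theorems \ref{existthcr}, \ref{th:existence-H2-2} and \ref{th:existence-H2-3}.
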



\begin{proof}[{\textbf{Proof of Theorem \ref{existthcr}}}] By using a similar argument as in the the proof of Theorem \ref{existth}, making use of Theorem \ref{sourceth2} instead of Theorem \ref{sourceth}, we can obtain the desired result.   
\end{proof}

\begin{proof}[{\textbf{Proof of Theorem \ref{th:existence-H2-2}}}] In view of Theorem \ref{sourceth2} and Theorem \ref{existthcr}, it is enough to show that condition \eqref{con1cr} holds.

(i) Assume $\nu=\delta_0$. Let $E\subset\overline{\xO}$ be a Borel set such that $0\in E$. Using the definition of $\CN_{N-1-\xe,1}$ in \eqref{Nas} and the condition on $\varepsilon$, we have
\bal 
\int_E \BBN_{N-1-\xe,1}[\1_E\xn](x)^{p+q} d(x)^{p+1}|x|^{-\frac{N-2}{2}(p+1)-\frac{N}{2}q}\dx 
\lesssim \int_\xO |x|^{-\frac{N-2}{2}(p+1)-\frac{N}{2}q-\xe(p+q)}\dx\lesssim 1.
\eal

(ii) Assume $\nu$ has compact support on $\partial \Omega$. Let $E\subset\overline{\xO}$ be a Borel. By \eqref{KN2a2}, we have
\ba\BAL\label{1cr}
&\int_E \BBN_{N-1-\xe,1}[\1_E\xn](x)^{p+q} d(x)^{p+1}|x|^{-\frac{N-2}{2}(p+1)-\frac{N}{2}q}\dx \\
&\asymp \int_E d(x)|x|^{-\frac{N-2}{2}}\left((d(x)|x|)^{-\frac{q}{p+q}}\mathbb{K}_\mu[\1_E\xn]\right)^{p+q}\,\dx \\
&\leq \int_\xO d(x)|x|^{-\frac{N-2}{2}}\left((d(x)|x|)^{-\frac{q}{p+q}}\mathbb{K}_\mu[\1_E\xn]\right)^{p+q}\,\dx.
\EAL
\ea

 By \eqref{estgranmartin1} with $\xg=\frac{q}{p+q},$ we have
\bal 
\int_\xO d(x)|x|^{-\frac{N-2}{2}}\left((d(x)|x|)^{-\frac{q}{p+q}}\mathbb{K}_\mu[\1_E\xn](x)\right)^{p+q}\,\dx\leq C \xn(E \cap \partial \Omega)^{p+q}\leq C\xn(\partial\xO)^{p+q-1}\xn(E).
\eal 
The desired result follows by the above inequality and \eqref{1cr}.
\end{proof}

\begin{proof}[{\textbf{Proof of Theorem \ref{th:existence-H2-3}}}]
By virtue of Theorem \ref{sourceth2}, it is enough to show that 
\ba \label{cap-equi-H2}
\mathrm{Cap}_{\BBN_{N-1-\xe,1},(p+q)'}^{p+1,-\frac{N-2}{2}(p+1)-\frac{N}{2}q}(E)\asymp\mathrm{Cap}_{\frac{2-q}{p+q},(p+q)'}^{\partial\xO}(E)\quad\text{for any Borel set}\;E\subset\partial\xO.
\ea
Indeed, since $\xn\in \mathfrak{M}^+(\partial\xO\cup \{0\})$ has compact support on $\partial\xO$, it follows that $\BBN_{N-1-\xe,1}[\xn]\asymp\BBN_{0,1}[\xn]$. In view of the proof of \eqref{Cap-equi-2}, we obtain \eqref{cap-equi-H2}. The proof is complete.
\end{proof}

\appendix

\section{Properties of the first eigenpair of $-L_\mu$} \label{subsect:eigen} Let $H$ be the constant defined in \eqref{valueH} and for $\mu \leq H^2$, let $\am$ and $\ap$ be defined in \eqref{apm}. Important properties of the first eigenvalue and its corresponding eigenfunction associated to $-L_\mu$ in $\Omega \setminus \Sigma$ are presented below (see e.g. \cite[Lemma 2.4 and Theorem 2.6]{DD1}).

(i) For any $\mu \leq H^2$, it is known that
\be\label{Lin01} \lambda_\mu:=\inf\left\{\int_{\Gw}\left(|\nabla u|^2-\frac{\xm }{d_\Sigma^2}u^2\right)\dx: u \in C_c^1(\Omega), \; \|u\|_{L^2(\Omega)}=1\right\}>-\infty.
\ee

\smallskip

(ii) If $\mu < H^2$, there exists a minimizer $\gf_{\xm }$ of \eqref{Lin01} belonging to $H^1_0(\Gw)$ and satisfying $-L_\mu \phi_\mu= \lambda_\mu \phi_\mu$  in $\Omega \setminus \Sigma$. Moreover,
$\phi_{\mu }\asymp d_\Sigma^{-\am}$ in $\Sigma_{\beta_0}$.

\smallskip

(iii) If $\xm =H^2$, there is no minimizer of \eqref{Lin01} in $H_0^1(\Gw)$, but there exists a nonnegative function $\phi_{H^2}\in H_{\mathrm{loc}}^1(\xO)$  such that $-L_{H^2}\phi_{H^2}=\lambda_{H^2}\phi_{H^2}$ in the sense of distributions in $\Omega \setminus \Sigma$ and $\phi_{H^2}\asymp d_\Sigma^{-H}  \quad  \text{in } \Sigma_{\beta_0}$.
In addition, $d_\Sigma^{-H}\xf_{H^2}\in H^1_0(\Gw; d_\Sigma^{-2H})$.

From (ii) and (iii) we deduce that, for $\mu \leq H^2$, there holds
\be \label{eigenfunctionestimates}
\xf_\xm \asymp d\,d^{-\am}_\Sigma \quad \text{in } \Omega \setminus \Sigma.
\ee

\section{$L_\mu$-harmonic measures}\label{appendix:har-measure}
Let $\beta_0$ be the constant in Subsection \ref{assumptionK}. Let $\eta_{\beta_0}$ be a smooth function such that $0 \leq \eta_{\beta_0} \leq 1$, $\eta_{\beta_0}=1$ in $\overline{\Sigma}_{\frac{\xb_0}{4}}$ and $\supp \eta_{\beta_0} \subset \Sigma_{\frac{\beta_0}{2}}$. We define
\bal W(x):=\left\{ \BAL &d_\Sigma(x)^{-\ap} \qquad&&\text{if}\;\mu <H^2, \\
&d_\Sigma(x)^{-H}|\ln d_\Sigma(x)| \qquad&&\text{if}\;\mu =H^2,
\EAL \right. \quad x \in \Omega \setminus \Sigma,
\eal
and
\bal
\tilde W(x):=1-\eta_{\beta_0}(x)+\eta_{\beta_0}(x)W(x), \quad x \in \Omega \setminus \Sigma.
\eal

Let $z \in \Omega \setminus \Sigma$ and $h\in C(\partial\Omega \cup \Sigma)$ and denote $\CL_{\mu ,z}(h):=v_h(z)$ where $v_h$ is the unique solution of the Dirichlet problem
\be \label{linear} \left\{ \BAL
L_{\mu}v&=0\qquad \text{in}\;\;\xO\setminus \Sigma\\
v&=h\qquad \text{on}\;\;\partial\xO\cup \Sigma.
\EAL \right. \ee
Here the boundary value condition in \eqref{linear} is understood in the sense that
\bal
\lim_{\dist(x,F)\to 0}\frac{v(x)}{\tilde W(x)}=h \quad \text{for every compact set } \; F\subset \partial \Omega \cup \Sigma.
\eal
The mapping $h\mapsto \CL_{\mu,z}(h)$ is a linear positive functional on $C(\partial\Omega \cup \Sigma)$. Thus there exists a unique Borel measure on $\partial\Omega \cup \Sigma$, called {\it $L_{\mu}$-harmonic measure in $\partial \Omega \cup \Sigma$ relative to $z$} and  denoted by $\omega_{\Omega \setminus \Sigma}^{z}$, such that
\bal
v_{h}(z)=\int_{\partial\Omega\cup \Sigma}h(y) \dd\omega_{\Omega \setminus \Sigma}^{z}(y).
\eal
Let $x_0 \in \Omega \setminus \Sigma$ be a fixed reference point. Let $\{\xO_n\}$ be an increasing sequence of bounded $C^2$ domains  such that
\bal
\overline{\xO}_n\subset \xO_{n+1}, \quad \cup_n\xO_n=\xO, \quad \mathcal{H}^{N-1}(\partial \Omega_n)\to \mathcal{H}^{N-1}(\partial \Omega),
\eal
where $\mathcal{H}^{N-1}$ denotes the $(N-1)$-dimensional Hausdorff measure in $\R^N$.
Let $\{\Sigma_n\}$ be a decreasing sequence of bounded $C^2$ domains  such that
\bal \Sigma \subset \Sigma_{n+1}\subset\overline{\Sigma}_{n+1}\subset \Sigma_{n}\subset\overline{\Sigma}_{n} \subset\Omega_n, \quad \cap_n \Sigma_n=\Sigma.
\eal
For each $n$, set $O_n=\xO_n\setminus \Sigma_n$  and assume that $x_0 \in O_1$. Such a sequence $\{O_n\}$ will be called a {\it $C^2$ exhaustion} of $\Gw\setminus \Sigma$. Then $-L_\mu$ is uniformly elliptic and coercive in $H^1_0(O_n)$ and its first eigenvalue $\lambda_\mu^{O_n}$ in $O_n$ is larger than its first eigenvalue $\lambda_\mu$ in $\Omega \setminus \Sigma$.

For $h\in C(\prt O_n)$, the following problem
\bal\left\{ \BAL
-L_{\xm } v&=0\qquad&&\text{in } O_n\\
v&=h\qquad&&\text{on } \prt O_n,
\EAL \right.
\eal
admits a unique solution which allows to define the $L_{\xm }$-harmonic measure $\omega_{O_n}^{x_0}$ on $\prt O_n$
by
\bal
v(x_0)=\myint{\prt O_n}{}h(y) \dd\gw^{x_0}_{O_n}(y).
\eal
The $L_{\xm }$-harmonic measures $\omega_{O_n}^{x_0}$ on $\prt O_n$ are used to define the boundary trace which is defined in a \textit{dynamic way} as in Definition \ref{nomtrace}.
%
%


\end{document}